\newtheorem{theorem}{Theorem}
\newtheorem{lemma}[theorem]{Lemma}
\newtheorem{corollary}[theorem]{Corollary}
\newtheorem{prop}[theorem]{Proposition}
\newtheorem*{theorem*}{Theorem}
\newtheorem*{corollary*}{Corollary}
\theoremstyle{definition}
\newtheorem{remark}[theorem]{Remark}
\newtheorem{definition}[theorem]{Definition}
\newtheorem*{remark*}{Remark}
\newtheorem{observation}[theorem]{Observation}
\newtheorem*{definition*}{Definition}
\newtheorem*{example*}{Example}
\numberwithin{theorem}{section}
\newcommand{\BC}{\mathbb C} 
\newcommand{\BR}{\mathbb R} 
\newcommand{\BN}{\mathbb N} \newcommand{\BQ}{\mathbb Q}
 \newcommand{\BZ}{\mathbb Z}
\newcommand{\CA}{\mathcal A} \newcommand{\CB}{\mathcal B}
\newcommand{\CC}{\mathcal C} 
\newcommand{\CE}{\mathcal E} \newcommand{\CF}{\mathcal F}
 \newcommand{\CH}{\mathcal H}
\newcommand{\CI}{\mathcal I} 
\newcommand{\CM}{\mathcal M} \newcommand{\CN}{\mathcal N}
\newcommand{\CS}{\mathcal S} \newcommand{\CT}{\mathcal T}
 \newcommand{\CX}{\mathcal X}
\newcommand{\CY}{\mathcal Y} \newcommand{\CZ}{\mathcal Z}
\newcommand{\aut}{\textup{Aut}(F_n)}
\newcommand{\nid}{\noindent}
\newcommand{\tup}{\textup}
\DeclareMathOperator{\prm}{\BR_{>0}^m}
\DeclareMathOperator{\spc}{\CB(E,R)}
\DeclareMathOperator{\spco}{\CB_0(E,R)}
\newcommand{\comment}[1]{}
\begin{document}
\title    {The spectra of polynomial equations with varying exponents}
\author   {Asaf Hadari}
\date{\today}
\begin{abstract}
We study the dependence of solutions of equations of the form $a_0 + a_1 z^{\ell_1} +  \ldots + a_m z^{\ell_m} = 0$, on the exponents $\ell_1, \ldots, \ell_m$. We apply  our results to equations that appear in graph theory,  the theory of $3$-manifolds fibering over the circle, and the theory of free-by-cyclic groups. In particular, we provide descriptions of the spectra of the Alexander polynomial of a fibered $3$-manifold, Teichm\"uller polynomials associated to such a manifold or to a free by cyclic group, and the family of characteristic polynomials of a fixed directed graph with varying edge lengths. \end{abstract}
\maketitle

\vspace {10mm}

\section{Introduction} \label{Int}

Let $p(z) = a_0 + a_1 z^{\ell_1} +  \ldots + a_m z^{\ell_m} $ be a polynomial. It is a classical problem to study how the roots of $p$ depend on the coefficients $a = (a_0, \ldots, a_m)$. In this paper we study a related question: how do the roots of $p$ depend on the exponents $\ell = (\ell_1, \ldots, \ell_m)$?

 An initial obstruction to studying this question is that it does not make sense for non-integer exponents without choosing a branch of the logarithm function. To avoid this problem, we write $z = e^w$  and turn our attention to studying equations of the form:

$$a_0 + \sum a_i e^{\ell_i w} = 0$$

\nid We call a function of the form $Q(w, \ell) = a_0 + \sum a_i e^{\ell_i w}$ a \emph{poly-exponential}. \\

 The motivation for studying the solutions of poly-exponential equations comes from an ever growing body of poly-exponentials in different areas of mathematics whose roots are fundamentally interesting objects.  Among these, are poly-exponentials associated to:
\begin{enumerate}
\item The classical multivariable Alexander Polynomial and McMullen's Teichm\"uller polynomial, which appear in the theory of fibered $3$-manifolds. 
\item The generalizations of the Teichm\"uller polynomial to the $\tup{Out}(F_n)$ setting, given by Dowdall, Kapovich, Leininger, and separately by Algom-Kfir, Hironaka and Rafi. 
\item The Perron polynomial of a directed graph, which was studied by McMullen.  
\end{enumerate} 

\nid The above examples all follow a similar pattern. Each of them is a polynomial in several variables, say $P \in \BC[X_1, \ldots, X_m]$. Each of them is associated to a family of objects parametrized by a vector $\ell = (\ell_1, \ldots, \ell_m)$, and in each of the examples, the polynomial $P(z^{\ell_1}, \ldots, z^{\ell_m})$ is a polynomial related to that object (for integer values of $\ell_i$). This kind of substitution is called a \emph{specialization}, and we denote  $P(z^{\ell_1}, \ldots, z^{\ell_m})= P_\ell$. 

For instance, the Perron polynomial $P$ is associated to a directed graph $\Gamma$ with $m$ edges. For any $\ell = (\ell_1, \ldots, \ell_m) \in \BN^m$, we can form a new graph by sub-dividing the $i^{th}$ edge into $\ell_i$ edges. The solutions of $P_\ell = 0$ are precisely the roots of the characteristic polynomial of this graph.  \\

The Teichm\"uller polynomial, its generalizations, and the Perron polynomial have an important property that makes one of their zeroes amenable to study. For all of these, there is a cone $\CC$ in the parameter space such that for every primitive integral $\ell \in \CC$, the polynomial $P_\ell$ has a positive real root of multiplicity $1$, whose absolute value is strictly greater than that of all the other roots. Call this number $\rho(\ell)$. We say that such a polynomial is \emph{Perron-Frobenius} (In analogy to the well known Perron-Frobenius Theorem), and we call $\rho(\ell)$ the \emph{Perron-Frobenius root}. 

 Using the inverse function theorem, it is simple to see that in such a case, the function $\rho$ can be extended real-analytically to a degree $-1$ homogeneous function on $\CC$. By other considerations, in all of the above cases, $\log \rho$ is a convex function, and for any $p \in \partial \CC$, $\lim_{\ell \to p} \rho(\ell) = \infty$. \\

The main theme we concern ourselves in this paper is: to what extent do the above results about the Perron-Frobenius root generalize? Do they extend to non Perron-Frobenius polynomials? This would be interesting in the case of the Alexander polynomial, which is not generally Perron-Frobenius. There are no known theorems about how its roots vary. Do the results for Perron-Frobenius roots hold for the other roots? Are there other continuous roots? Do any other roots go to infinity at the boundary of some cone? This would be interesting, for example, for the Perron polynomial. It is a well known idea in graph theory that many of the eigenvalues of the adjacency matrix have important effects on the structure of a graph.  \\

\nid \textbf{Notation.}  For the rest of the paper, we fix $Q(w, \ell) = a_0 + \sum_{i = 1}^m a_i e^{\ell_i w}$. We assume that $a_i \in \BR$, and that $a_0 \neq 0$. Let $\CS = \{\ell \in {\BR_{>0}}^m | \ell_1, \ldots, \ell_{m-1} < \ell_m \}$. For reasons that will become apparent later on, unless otherwise stated -  we will restrict ourselves to values $\ell \in \CS$. Given $\ell \in \CS$, we denote by $Q_\ell = Q(\cdot, \ell)$, and  $$\CZ(\ell) = \{w \in \BC | Q_\ell(w) = 0 \}$$ We call $\CZ(\ell)$ the \emph{spectrum of $Q$ at $\ell$}.

\subsection{The continuity of the spectrum}
\begin{definition} A \emph{root} of $Q$ is a function $\psi: \CS \to \BC$ such that for every $\ell$: $\psi(\ell) \in \CZ(\ell)$. 

\end{definition}

Note that under this definition, the Perron-Frobenius root is indeed a root. Our first main result is that the entire spectrum varies continuously. 
\begin{theorem}\label{Theorem1}
There exists a countable collection of continuous roots $\{\psi_i \}_i$ such that for every $\ell \in \CS$:
$$\CZ(\ell) = \{ \psi_i(\ell)\}_i $$
\end{theorem}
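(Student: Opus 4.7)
My approach is to combine local argument-principle analysis with an asymptotic Rouch\'e enumeration of zeros along the imaginary direction, using the contractibility of $\CS$ to assemble a global countable family.

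Because $\ell_m$ strictly exceeds every other $\ell_i$ on $\CS$, the term $a_m e^{\ell_m w}$ dominates as $\mathrm{Re}(w) \to +\infty$ and $|Q_\ell(w)| \to \infty$, while as $\mathrm{Re}(w) \to -\infty$ all exponentials decay and $Q_\ell(w) \to a_0 \neq 0$. These bounds are uniform on compact subsets of $\CS$, so zeros lie in a vertical strip of controlled width. For local continuity of the zero set I would use the argument principle: if $Q_{\ell_0}$ is nonvanishing on a closed contour $\gamma$, then uniform continuity of $Q$ in both variables produces a neighborhood $U$ of $\ell_0$ on which $Q_\ell$ is also nonvanishing on $\gamma$, so the continuous integer-valued integral $\frac{1}{2\pi i}\oint_\gamma Q_\ell'/Q_\ell\,dw$ is locally constant. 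Combined with continuity of the power sums $\frac{1}{2\pi i}\oint_\gamma w^k Q_\ell'/Q_\ell\,dw$, this shows that the multiset of zeros inside $\gamma$ depends continuously on $\ell$.

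For the infinitely many zeros with large $|\mathrm{Im}(w)|$ I would compare $Q_\ell$ via Rouch\'e to its dominant two-term truncation $a_0 + a_m e^{\ell_m w}$, whose zeros form the arithmetic progression $w_n = \frac{1}{\ell_m}\log(-a_0/a_m) + \frac{2\pi i n}{\ell_m}$. With the truncation bounded below on the boundary of each band $\{\mathrm{Im}(w) \in [(2n-1)\pi/\ell_m, (2n+1)\pi/\ell_m]\}$ intersected with the strip of Step~1, and with the remaining terms $\sum_{i=1}^{m-1} a_i e^{\ell_i w}$ bounded above uniformly in compact subsets of $\CS$, a careful choice of contour gives exactly one simple zero $\psi_n(\ell)$ per band for $|n|$ larger than some threshold $N(K)$ depending on the compact $K \subset \CS$. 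By the implicit function theorem $\psi_n$ is real-analytic in $\ell$ on $K$. Exhausting $\CS$ by an increasing sequence of compacts yields an infinite continuous family $\{\psi_n\}_{|n| > N}$ that covers all sufficiently outlying zeros.

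Finally, the remaining "inner" zeros form a continuous finite multiset in a bounded box, by the argument-principle step. I would lift this multiset to finitely many continuous functions $\psi_1, \ldots, \psi_M$ using the contractibility of $\CS$ (an open convex cone), permitting distinct $\psi_i$'s to take a common value at loci of multiplicity. Combined with $\{\psi_n\}_{|n|>N}$ this produces the desired countable family. The main obstacle is this last lifting step, where local Puiseux-type expansions at loci of multiple zeros could in principle introduce nontrivial monodromy obstructions. I expect to handle this by exploiting the reality of the $a_i$ and $\ell$: the real double-zero locus is codimension one with the monodromy-free local model $(w - w_*)^2$ equal to a real-valued linear form in $\ell - \ell_*$, and complex double zeros come in complex-conjugate pairs, which constrains the monodromy around codimension-two branch loci to be compatible with a consistent global labeling of the $\psi_i$.
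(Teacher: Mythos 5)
Your first two steps (uniform confinement of $\CZ(\ell)$ to a vertical strip over compact subsets of $\CS$, and continuity of the local zero multiset via the argument principle) are sound and parallel Lemma \ref{Lemma3.1.5} and the local analysis in the paper. The proposal breaks down at the asymptotic Rouch\'e step. Each term of $Q_\ell$ has modulus $|a_i|e^{\ell_i \mathfrak{Re}(w)}$, which is constant along vertical lines, so the discarded terms $\sum_{i=1}^{m-1} a_i e^{\ell_i w}$ do not become small relative to the truncation $a_0 + a_m e^{\ell_m w}$ as $|\mathfrak{Im}(w)| \to \infty$: there is no decay in $n$ whatsoever, hence no threshold $N(K)$ beyond which Rouch\'e applies on the band boundaries. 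Moreover the conclusion itself is false, not just the method: take $m=2$, $\ell = (1,2)$, and $Q_\ell(w) = e^{2w} - 10.1\, e^{w} + 1 = (e^w - 10)(e^w - 0.1)$. Its zeros lie on the two vertical lines $\mathfrak{Re}(w) = \pm\log 10$, nowhere near the zeros of $1 + e^{2w}$ (which lie on $\mathfrak{Re}(w) = 0$), and the bands of height $2\pi/\ell_m = \pi$ contain two zeros for even band index and none for odd, for arbitrarily large $|\mathfrak{Im}(w)|$. In general the high-imaginary-part zeros of an exponential sum cluster along several vertical chains determined by all of the terms, so the family $\{\psi_n\}_{|n|>N}$ that is supposed to account for all outlying zeros does not exist, and the backbone of your countable collection collapses.

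The difficulty you flag at the end --- lifting the continuous finite multiset to single-valued continuous functions across collision loci --- is also not resolved by your sketch: complex multiple zeros need not occur over a codimension-one locus with the real model you describe, and conjugation symmetry alone does not preclude monodromy around codimension-two strata when $m \geq 2$. The paper takes a different route that bypasses both issues: it studies the analytic set $\CX = Q^{-1}(0) \subset \BC \times \BR^m$, proves the quantitative diameter bound of Proposition \ref{Prop3.1.1} for the $w$-projection of connected pieces of $\CX$ lying over a relatively compact $A \subset \CS$, deduces in Proposition \ref{Prop3.1.2} that the $\ell$-projection of every connected component is open and relatively closed, hence all of $\CS$, and then extracts continuous roots through every point of the spectrum together with countability (Corollary \ref{Cor3.1.3}, Lemma \ref{Lemma3.1.4}). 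To salvage your strategy you would have to replace the two-term truncation by an analysis retaining all terms uniformly in the strip, which in effect pushes you back to working with the full zero variety as the paper does.
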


\subsection{Ordering the spectrum} 

The Perron-Frobenius root plays two roles. In addition to being a root, it is also the spectral radius. It is natural to ask, given a poly-exponential $Q$, if the spectral radius varies continuously. More generally, we can attempt to look at the second largest root (in absolute value), or the third, or the smallest, etc. and ask if any of these quantities varies continuously. We begin by defining the functions we are interested in. 

\begin{definition} Given $w_1, w_2 \in \CZ(\ell)$, write $w_1 \cong_\ell w_2$ if $(w_1 -w_2)\ell \in 2 \pi\BZ^m$. That is, $w_1 \cong_\ell w_2$ if $\forall i$, $e^{\ell_i w_1} = e^{\ell_i w_2}$.
\end{definition}

\begin{definition} Let $\ell \in \CS$ have rational coordinates. There are finitely many $\cong_\ell$ classes of roots. Each such class $[w]$ has a finite multiplicity as a zero of $Q$. Denote this multiplicity by  $\mu_{\ell}(w)$. Create an ordered list $$\mathfrak{Re}([w_1]) \leq \ldots \leq \mathfrak{Re}([w_s]) $$
where each equivalence class $[w]$ appears in the list $\mu_\ell(w)$ times. Let $\rho_i(\ell)$ be the $i^{th}$ term from the right (if there exists such a term), and $\lambda_i(\ell)$ be the $i^{th}$ term from the left (if there exists such a term). Note that given $i$, the functions $\rho_i$ and $\lambda_i$ will be defined at all but finitely many rational $\ell$'s. 
\end{definition}

In general, the functions defined above can behave quite terribly. We sum up some of their pathologies in the following theorem. 

\begin{theorem}\label{Theorem2} Suppose $m \geq 2$.
\begin{enumerate}
\item Let $Q$ be a poly-exponential with $m$ terms. Let $\ell \in \CS$. Let $$R = \{i: \rho_i \tup{ cannot be extended continuously at } \ell \}$$ and $$L = \{i: \lambda_i \tup{ cannot be extended continuously } \ell \}$$ Then both sets are infinite.  
\item There exists a poly-exponential $Q$ with $m$ terms, and a co-null set $E \subset \CS$ such that $\rho_1$ is not continuous at any point in $E$. 
\end{enumerate}

\end{theorem}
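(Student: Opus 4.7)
For part (1), my plan is to exploit how rational perturbations of $\ell$ produce widely varying numbers of $\cong_\ell$-equivalence classes, which in turn destabilizes the ordering of real parts. By Theorem~\ref{Theorem1} the zeros vary continuously; standard theory for exponential sums shows that the real parts of zeros of $Q_{\ell'}$ accumulate, as $|\mathrm{Im}(w)|\to\infty$, near a finite set of ``strand'' values determined by balance relations between pairs of terms of $Q$. For $\ell' \to \ell$ along rationals with common denominator $N\to\infty$, the count $s(\ell')$ grows like a linear function of $N$, with the roots distributed across the strands in proportions determined (to leading order) by the Newton polygon of $Q$. I will compare this behavior to a second class of rational approximations along ``commensurable rays'' (lines in $\mathcal{S}$ where the $\ell_i$ are proportional to a fixed integer vector), along which $s(\ell')$ remains bounded. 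Along such rays $\rho_i$ is only defined for small $i$ and takes just the finite list of strand values, while along a generic denominator-increasing sequence $\rho_i$ fills in that list with many near-copies. The transition indices at which $\rho_i$ jumps between strands therefore differ along the two families, and one can match values across them for at most finitely many $i$; the remaining infinitely many indices land in $R$. A parallel argument from the bottom of the spectrum gives that $L$ is infinite. The main technical obstacle is the careful count of how many classes fall near each strand under a generic rational perturbation, which I expect to reduce to a Newton-polygon computation.

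For part (2), I will construct an explicit poly-exponential $Q$ for which the strand carrying the maximum real part is selected by a fine arithmetic property of $\ell$. Pick coefficients so that two strands at $\mathrm{Re}(w) = c_1(\ell)$ and $c_2(\ell)$ satisfy $c_1(\ell) \equiv c_2(\ell)$ generically but the actual maximum real part is pushed to one strand or the other by a small correction that depends on, say, the residue class of a numerator in a lowest-terms expression for some ratio $\ell_j/\ell_k$. Each residue class occurs on a dense subset of rational $\ell$'s, so along any irrational $\ell^\ast$ both sides are hit by rational approximations; if the two strands' exact real parts disagree at $\ell^\ast$ (a condition satisfied on an open dense subset of $\mathcal{S}$), then $\rho_1$ oscillates between $c_1(\ell^\ast)$ and $c_2(\ell^\ast)$ along these approximations and cannot extend continuously. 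The set of exceptional points where $c_1 = c_2$ is an analytic subvariety, hence has Lebesgue measure zero, so the discontinuity set is co-null. The main obstacle is to exhibit explicit coefficients producing this ``strand competition'' on a substantial set; I anticipate using a three-term or four-term template of the form $a_0 + a_1 e^{\ell_1 w} + a_2 e^{\ell_2 w} + a_3 e^{\ell_3 w}$ with the $a_i$ tuned so that the polynomials $P_\ell$ at rational $\ell$ have two roots of the same modulus whose genuine dominance is broken only at subleading order by the arithmetic of $\ell$.
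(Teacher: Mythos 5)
Your part (1) rests on a premise that is false in the regime that matters. For incommensurable $\ell$ the real parts of zeros of an exponential sum do not accumulate on a finite set of discrete ``strand'' values determined by pairwise balances; they fill up finitely (or countably) many \emph{non-degenerate closed intervals} --- the set of $x$ for which there exist phases $\xi_i\in S^1$ with $a_0+\sum a_i\xi_i e^{\ell_i x}=0$, which is exactly the limit set $\Lambda(\ell)$ the paper computes (Proposition \ref{Prop3.2.1}, Corollary \ref{Cor3.2.3}, Lemma \ref{Lemma3.2.5}). Your two-family comparison also does not work as stated: along a ``commensurable ray'' the number of $\cong_{\ell'}$-classes is \emph{not} bounded as $\ell'$ approaches a point of irrational direction (the denominators must blow up), and more importantly, a mismatch in the mere \emph{count} of classes along two approximating families does not force infinitely many indices $i$ with no continuous extension of $\rho_i$ --- the values could still limit onto the same points for all but finitely many $i$. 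The step ``one can match values across them for at most finitely many $i$'' is precisely the content of the theorem and is asserted rather than proved. The paper closes this gap by a positional argument, not a counting one: since each interval of $\Lambda(\ell)$ has positive length, any index $j$ for which $\rho_j$ extends continuously with value strictly below the top of its interval forces the adjacent index $j^-$ to be discontinuous (Lemma \ref{Lemma3.2.6}), and iterating this inside a single non-degenerate interval already produces infinitely many discontinuous indices, for $\rho$ and $\lambda$ alike.

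For part (2) you do not actually produce the required poly-exponential; the ``residue class of a numerator selects the dominant strand'' mechanism is left as a hope, and nothing in the sketch shows that such coefficient tuning is possible, nor that the subleading arithmetic correction stays bounded away from zero along the approximating rationals so that $\rho_1$ genuinely oscillates between $c_1(\ell^\ast)$ and $c_2(\ell^\ast)$. There is also a soft spot in identifying your exceptional set with a null set: you need the discontinuity on a co-null set of the (mostly irrational) points of $\mathcal{S}$, where $\rho_1$ is not even defined, so the argument must compare limits along rational approximations with the spectrum at $\ell^\ast$ itself. The paper's construction is much simpler and worth internalizing: take $Q(w,\ell)=e^{\ell_1 w}+\cdots+e^{\ell_m w}-1$. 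Then $\sup\Lambda(\ell)$ is the balance point $\omega(\ell)$ where the dominant term equals the sum of the moduli of the others, a configuration of phases of the form $(1,-1,\dots,-1)$ that can only be realized by an actual zero when $\ell\in\BR\cdot\BQ^m$. Hence on the co-null set $E=\mathcal{S}\setminus\BR\cdot\BQ^m$ every zero of $Q_\ell$ has real part strictly below $\sup\Lambda(\ell)$, while rational perturbations produce zeros with real parts approaching $\sup\Lambda(\ell)$, so $\rho_1$ admits no continuous extension at any point of $E$. If you want to salvage your approach, you would need to replace the ``strand competition'' heuristic by an argument of this unattained-supremum type, which requires controlling \emph{all} phases simultaneously, not a single residue class.
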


\subsection{The behavior of roots at $\partial \CS$} 
As it turns out, the topology on $\partial \CS$ inherited from $\BR^m$ is not suitable for studying the behavior of roots as $\ell$ approaches the boundary.  Our strategy will be to introduce a compactification of $\CS / \BR_{>0}$, for which a theorem similar to Theorem \ref{Theorem1} holds. Noting that $\ell \to \CZ(\ell)$ is a degree $-1$ homogeneous function will complete the description we provide. One feature of this description is that it won't differentiate between roots that lie on the unit circle. \\

\nid \textbf{Notation.} Given $\ell = (\ell_1, \ldots, \ell_m) \in \prm$, let $\min(\ell) = \min(\ell_1, \ldots, \ell_m)$, and let $\bar{\ell} = (\ell_m, \ell_m - \ell_1, \ldots, \ell_m - \ell_{m-1})$. 
Define a function $\overline{Q}: \BC \times \BR^m \to \BC$ by setting: $$\overline{Q}(w, \ell) = a_0 e^{\ell_m w} + \sum_{i=1}^{m-1} a_i e^{(\ell_m - \ell_i)w} + a_m = e^{-\ell_mw} Q(-w, \ell)$$
Write $$\CH_- = \{z \in \BC| \mathfrak{Re}(z) < 0 \}$$ $$\CH_+ = \{z \in \BC| \mathfrak{Re}(z) > 0 \}$$  $$\CH_0 = \BC - (\CH_- \cup \CH_+)$$  Let $\CZ_-(\ell) = \CZ(\ell) \cap \CH_-$. Define $\CZ_+$ similarly. 

\smallskip
\nid Let $\iota: \BR^m_{\geq 0} \to [0,\infty^m] \times [0,\infty]^m$ be the degree $0$, $\BR_{>0}$-homogeneous function given by: 

$$\iota(\ell) = (\frac{1}{\min(\ell)} \ell, \frac{1}{\min(\bar{\ell})} \bar{\ell}) $$

\nid Following the convention that $e^{\infty w} = 0$ for any $w\in \CH_-$, we can extend the definition of $\CZ_-$ to $[0,\infty]^m$. 

\begin{definition} Given $(p,q) \in [0,\infty]^m \times [0,\infty]^m$, let $$\CZ(p,q) = \CZ_-(Q, p)  \cup -\CZ_-(\overline{Q}, q)$$
\end{definition}

\nid Let $\BC' = \BC / \CH_0$, and let $z \to [z]$ be the corresponding quotient map.

\begin{theorem}\label{Theorem3}
Let $\{\psi_i \}_i$ be a set of roots, as provided by Theorem \ref{Theorem1}. For each $ i $, let: 

$$\psi_i'  (\ell) = \left\{ \begin{array}{l c} [\psi_i(\ell)]& \psi_i(\ell) \in \CH_0  \\ \left[ \min(\ell) \psi_i \right]&\psi_i(\ell) \in \CH_- \\  \left[ \min(\overline{\ell})\psi_i \right]&\psi_i(\ell) \in \CH_+   \end{array} \right.$$

Given $(p,q) \in [0,\infty]^m \times [0,\infty]^m$, then for every $i$,  $\lim_{\iota(\ell) \to (p,q)} \psi_i'(\ell)$ exists and $$\{\lim_{\iota(\ell) \to (p,q)} \psi_i'(\ell)\}_i  - \{[0] \}= \CZ(p,q)$$

\end{theorem}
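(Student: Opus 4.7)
The plan is to rescale the variable $w$ so that the coefficients of the resulting poly-exponential depend, in the limit, only on $\iota(\ell)$, and then apply Hurwitz's theorem on each of the half-planes $\CH_\pm$. The two rescalings, by $\min(\ell)$ and by $\min(\bar\ell)$, correspond precisely to the two factors of $\iota$, while the quotient $\BC' = \BC/\CH_0$ absorbs roots that degenerate onto the imaginary axis into the single point $[0]$.

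First I would reduce the $\CH_+$ case to the $\CH_-$ case via the symmetry
$\overline{Q}(w, \ell) = e^{-\ell_m w} Q(-w, \ell)$, which yields
$\CZ_+(Q, \ell) = -\CZ_-(\overline{Q}, \ell)$.
It therefore suffices to establish the $\CH_-$ statement for $Q$; the $\CH_+$ statement follows by running the same argument for $\overline{Q}$ and negating. Next, I would introduce the rescaled poly-exponential
\[ \hat Q_\ell(u) := Q(u/\min(\ell), \ell) = a_0 + \sum_{i=1}^m a_i e^{\hat\ell_i u}, \qquad \hat\ell := \ell/\min(\ell), \]
whose parameter $\hat\ell$ is the first factor of $\iota(\ell)$. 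A zero $w$ of $Q_\ell$ lies in $\CH_-$ exactly when $u := \min(\ell) w$ is a zero of $\hat Q_\ell$ in $\CH_-$. As $\iota(\ell) \to (p, q)$, $\hat\ell \to p \in [1, \infty]^m$; on any compact $K \subset \CH_-$ we have $|e^{\hat\ell_i u}| = e^{\hat\ell_i \mathfrak{Re}(u)} \to 0$ uniformly whenever $\hat\ell_i \to \infty$. Thus $\hat Q_\ell \to Q(\cdot, p) := a_0 + \sum_{p_i < \infty} a_i e^{p_i u}$ locally uniformly on $\CH_-$, in accordance with the convention $e^{\infty u} = 0$ from the excerpt.

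By Hurwitz's theorem, around each zero $u_0 \in \CH_-$ of $Q(\cdot, p)$ of multiplicity $k$, for all $\ell$ with $\iota(\ell)$ sufficiently close to $(p, q)$ the function $\hat Q_\ell$ has exactly $k$ zeros in a small disk about $u_0$; conversely, any subsequential limit in $\CH_-$ of zeros of $\hat Q_\ell$ is a zero of $Q(\cdot, p)$. Pulling back via $u = \min(\ell) w$ matches these zeros with the continuous roots $\psi_i$ provided by Theorem \ref{Theorem1}. Combining this with the parallel argument for $\overline{Q}$ on the $\CH_+$ side, and noting that roots in $\CH_0$ are sent to $[0]$ by definition of $\psi_i'$, yields the set equality $\{\lim \psi_i'(\ell)\}_i \setminus \{[0]\} = \CZ_-(Q, p) \cup (-\CZ_-(\overline{Q}, q)) = \CZ(p, q)$.

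The hard part will be showing that for each fixed $i$ the limit $\lim_{\iota(\ell) \to (p, q)} \psi_i'(\ell)$ actually exists in $\BC'$, rather than merely that the set of limits has the right shape. The delicate cases are those in which $\psi_i$ oscillates between $\CH_-$ and $\CH_+$ along a sequence with $\iota(\ell) \to (p, q)$, or in which the rescaled root $\min(\ell)\psi_i(\ell)$ drifts toward $\CH_0$ (or escapes to infinity along the imaginary direction). The quotient $\BC' = \BC/\CH_0$ is precisely designed to absorb all such degenerate behavior into $[0]$, but closing this gap will require combining Hurwitz with the continuity of $\psi_i$ from Theorem \ref{Theorem1} and the homogeneity $\CZ(t\ell) = (1/t)\CZ(\ell)$ to verify that each $\psi_i'$ converges in the quotient topology, not just along subsequences.
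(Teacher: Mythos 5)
Your setup --- reducing $\CH_+$ to $\CH_-$ via $\overline{Q}$, rescaling by $\min(\ell)$ so that the relevant parameter becomes $\iota(\ell)$, and comparing with the limiting poly-exponential by Hurwitz/Rouch\'e --- is the same skeleton as the paper's argument, and it does deliver the easy inclusion (any limit of $\psi_i'$ other than $[0]$ lies in $\CZ(p,q)$). But the part you explicitly defer --- that $\lim_{\iota(\ell)\to(p,q)}\psi_i'(\ell)$ exists for every single $i$, and that every point of $\CZ(p,q)$ is attained as the limit of some fixed index $i$ --- is the actual content of the theorem, and pointwise Hurwitz along sequences cannot give it: nothing in your sketch prevents one root $\psi_i$ from sitting near one zero of the limit function along one sequence $\iota(\ell)\to(p,q)$ and near a different zero (or in the opposite half-plane) along another, since Hurwitz controls only one compact set and one convergent sequence of functions at a time. "Matching these zeros with the continuous roots $\psi_i$" is exactly the step that needs a proof.

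The paper closes this gap with a uniform, quantitative version of your comparison, obtained from compactness of the family $\spc$: Remark \ref{Rmk3.1.8} shows the recentered functions lie in $\spc$ not only at zeros of $Q$ but wherever $|Q|$ is small; Lemma \ref{Lemma3.3.1} then produces a radius $r$ and a lower bound $\epsilon>0$, uniform over all of $\spc$, for circles on which $|f|>\epsilon$; and Lemma \ref{Lemma3.3.2} uses this to show that on a whole neighborhood $U$ of $(p,q)$, for all $w$ in a left half-plane $H_M$, the zero counts of $Q(\cdot,\ell)$ and of the truncation $Q_1$ (the terms whose exponents stay finite) agree in balls of radius $<\xi$. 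Since this count is constant on the connected set $\iota^{-1}(U)$ and the $\psi_i$ are continuous, the \emph{set} of indices $i$ with $\psi_i'(\ell)\in B(\omega,\xi)$ is constant there for each $\omega\in\CZ(p,q)$ (Lemma \ref{Lemma3.3.3}), which gives surjectivity with a fixed index; and the same trapping yields existence of each limit: either $\psi_i'$ enters some $H_M\cup -H_M$ with $\iota(\ell)\in U$ and is thereafter confined to a shrinking ball about a unique point of $\CZ(p,q)$, or for every $M$ it eventually avoids $H_M\cup -H_M$ and hence tends to $[0]$ (Lemma \ref{Lemma3.3.4}). Your proposal names the right tools and correctly identifies the delicate cases, but it supplies neither this uniformity over a neighborhood of $(p,q)$ nor the constancy-of-index argument, so as written it establishes only the subsequential, set-level statement rather than the theorem.
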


\begin{corollary}\label{Corollary1} In the notation above, if $\psi$ is a root, and $\iota(\ell) \to (p,q)$ then one of the following happens. 
\begin{enumerate}
\item There exists $\lambda > 0$ such that:  $$\lim_{\iota(\ell) \to (p,q)} \frac{|\exp(-\psi(\ell)) |}{\exp(\frac{\lambda}{\min(\ell)})} = 1 $$
\item There exists $\lambda > 0$ such that:  $$\lim_{\iota(\ell) \to (p,q)} \frac{|\exp(\psi(\ell)) |}{\exp(\frac{\lambda}{\min(\overline{\ell})})} = 1 $$
\item The root $[\psi]$ is bounded as $\iota(\ell) \to (p,q)$.
\end{enumerate}
\end{corollary}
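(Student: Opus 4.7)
The plan is to deduce Corollary~\ref{Corollary1} directly from Theorem~\ref{Theorem3} by unpacking the piecewise definition of $\psi_i'$ and case-splitting according to which half-plane $\psi(\ell)$ eventually occupies. Along a sequence $\ell^{(n)}$ realizing $\iota(\ell^{(n)}) \to (p,q)$, after taking logarithms the alternatives (1) and (2) read $\min(\ell)\,\mathfrak{Re}(-\psi(\ell))\to\lambda$ and $\min(\overline{\ell})\,\mathfrak{Re}(\psi(\ell))\to\lambda$; both quantities are degree-$0$ homogeneous in $\ell$, so I am free to renormalize conveniently in each regime. By pigeonhole I pass to a subsequence on which $\psi(\ell^{(n)})$ lies in a single region $\CH_-$, $\CH_+$, or $\CH_0$, and (for a possibly discontinuous root $\psi$) refine further so that the representation $\psi(\ell^{(n)}) = \psi_{i(n)}(\ell^{(n)})$ has $i(n)$ either constant or $i(n) \to \infty$.

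If $\psi$ eventually lies in $\CH_0$, then $\mathfrak{Re}(\psi)=0$ so $[\psi(\ell^{(n)})]=[0]$, immediately giving alternative (3). If $\psi$ eventually lies in $\CH_-$, I normalize $\min(\ell^{(n)})=1$, so by Theorem~\ref{Theorem3} we have $[\psi(\ell^{(n)})]=\psi_i'(\ell^{(n)}) \to [\zeta]$ in $\BC'$. When $[\zeta]\neq [0]$, the unique lift of $\zeta$ to $\CH_-$ is the $\BC$-limit of $\psi(\ell^{(n)})$, and with $\lambda = -\mathfrak{Re}(\zeta) > 0$ the direct ratio computation yields alternative (1). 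When $[\zeta] = [0]$, we have $\mathfrak{Re}(\psi(\ell^{(n)})) \to 0$, so $[\psi(\ell^{(n)})] \to [0]$ in $\BC'$, giving alternative (3). The $\CH_+$ region is symmetric: I normalize $\min(\overline{\ell^{(n)}}) = 1$ and apply the same argument using $\overline{Q}$ and the second coordinate of $\iota$, yielding alternative (2) or (3).

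The main obstacle is the discontinuous-root case with $i(n)\to\infty$, for which Theorem~\ref{Theorem3} does not directly give convergence of $\psi'(\ell^{(n)})$. Here I would use the description $\{[\zeta_i]\}_i - \{[0]\}=\CZ(p,q)$ from Theorem~\ref{Theorem3}: any accumulation point of $\{\psi'(\ell^{(n)})\}$ in $\BC'$ must lie in the closure of $\CZ(p,q)\cup\{[0]\}$, and a diagonal extraction delivers a further subsequence on which the analysis of the previous paragraph applies. A subtler secondary point is verifying that the degenerate sub-case $[\zeta]=[0]$ inside $\CH_-$ or $\CH_+$ really delivers boundedness of $[\psi]$ in $\BC'$ rather than merely of some rescaling; this is precisely why fixing the normalization $\min(\ell)=1$ (respectively $\min(\overline{\ell})=1$) \emph{before} taking the limit is essential, since with that choice the convergence $\min(\ell)\mathfrak{Re}(\psi) \to 0$ becomes $\mathfrak{Re}(\psi) \to 0$ directly.
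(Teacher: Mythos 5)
Your reduction of the corollary to Theorem~\ref{Theorem3} --- splitting according to whether $\psi(\ell)$ lies in $\CH_-$, $\CH_0$ or $\CH_+$, taking $\lambda$ to be minus the real part of the limiting value, and reading the limit $[0]$ as the bounded alternative --- is exactly the intended derivation, and for the continuous roots $\psi_i$ it goes through as you describe. The genuine gap is in the case you yourself flag as the main obstacle: a root $\psi$ that is merely a selection from $\CZ(\ell)$, so that the index in $\psi(\ell^{(n)})=\psi_{i(n)}(\ell^{(n)})$ is not eventually constant. Your remedy --- noting that every accumulation point of $\psi'(\ell^{(n)})$ lies in $\CZ(p,q)\cup\{[0]\}$ and extracting diagonal subsequences --- only shows that every subsequence has a further subsequence along which one of the three alternatives holds, possibly with a different $\lambda$ each time. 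That is strictly weaker than the statement: if $\CZ(p,q)$ contains two points with distinct negative real parts $-\lambda_1\neq-\lambda_2$, nothing in your argument prevents $\psi$ from hopping between the corresponding nearby zeros of $Q_\ell$ as $\iota(\ell)\to(p,q)$, in which case neither (1) nor (2) holds with any single $\lambda$ and (3) fails as well. The statement of Theorem~\ref{Theorem3}, which concerns the fixed countable family $\{\psi_i\}$ and their limits, does not by itself control an arbitrary root.

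What the paper actually uses here is Lemma~\ref{Lemma3.3.4}, which asserts that for an \emph{arbitrary} root $\psi$ the full limit of $\psi'(\ell)$ exists; its proof does not pass through the family $\{\psi_i\}$ but through the Rouch\'e-type counting statement of Lemma~\ref{Lemma3.3.2} (together with Remark~\ref{Rmk3.1.8}): on a suitable neighborhood $U$ of $(p,q)$, once $\psi'(\ell)$ enters a small ball about some $\omega'\in\CZ(p,q)$ it remains in that ball for every $\ell'$ with $\iota(\ell')\in U$, and this is precisely what excludes the hopping and produces a single $\lambda$. To close your argument you should either invoke Lemma~\ref{Lemma3.3.4} directly (after which the subsequence and diagonal extractions become unnecessary) or reproduce that zero-counting argument. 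Two smaller points: the degree-$0$ homogeneity you use to justify the normalization $\min(\ell)=1$ is a property of the sets $\CZ(\ell)$, not of an arbitrary root, which need not satisfy $\psi(r\ell)=\psi(\ell)/r$; the cleaner formulation is in terms of the rescaled quantities $\min(\ell)\,\mathfrak{Re}(\psi(\ell))$ and $\min(\overline{\ell})\,\mathfrak{Re}(\psi(\ell))$ rather than a choice of representative. Finally, in the subcase $[\zeta]=[0]$ what you obtain is boundedness of the rescaled root, which is the reading of alternative (3) that the corollary intends; it is worth saying this explicitly.
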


\begin{remark}\label{Remark1}
This corollary provides new information, even in the case of a Perron-Frobenius root. Namely, it provides the rate at which the root goes to $\infty$ at the boundary.  
\end{remark}
\begin{remark} Suppose $a_i \in \BZ$ for $1 \leq i \leq m$. One feature of Corollary \ref{Corollary1}, is that for every $(p,q)$ on the boundary of $\iota(\CS)$, exactly one of the following holds: 
\begin{enumerate}
\item There is a root $\psi$ such that $|\psi|$ goes to $\infty$ at $(p,q)$. 
\item After discarding infinite exponents, both $p$ and $q$ are cyclotomic polynomials. 
\end{enumerate} 
It can be shown using Proposition \ref{Prop3.2.1} that the set of all $(p,q)$ such that both $p$ and $q$ are cyclotomic is meager, in the sense of the Baire category theorem. 
\end{remark}

\nid \textbf{Organization of the paper}. Section \ref{Section2} discusses the poly-exponentials mentioned in the introduction. While \ref{Section2.1} and \ref{Section2.2} are only introductions, \ref{Section2.3} contains new results. Notably, it provides a new family of poly-exponentials whose relationship to the Perron polynomial is similar to the relationship between the Alexander polynomial and the Teichm\"uller polynomial. Furthermore, it contains results about the set of $\ell$ for which the subdivided graph $\Gamma_\ell$ has properties that are usually seen as rare, such as having a non-diagonalizable adjacency matrix. Section \ref{Section3} contains the proofs of the theorems.  Section \ref{Sec4} contains several explicit examples in which the theorems are applied. \\

\nid \textbf{Acknowledgements.} The author would like to thank Jayadev Athreya, Eriko Hironaka, Thomas Koberda, and Yair Minsky for helpful discussions concerning various incarnations of the results in this paper.

\section{Some important poly-exponentials}\label{Section2}
\subsection{Poly-exponentials in 3-manifold theory}\label{Section2.1}
Let $M$ be a compact $3$-manifold (possibly with boundary) that fibers over the circle $S^1 = \BR / \BZ$. To any such fibration of $M$ we attach several objects: 

\begin{enumerate}
\item A fiber $S$. 
\item The isotopy class of the monodromy: $\phi \in \tup{Mod}(S)$, where $\tup{Mod}(S)$ is the mapping class group of $S$. 
\item A cohomology class $\omega \in H^1(M; \BZ)$ defined by pulling back the form $dx$ from $S^1$.   

\end{enumerate}

\nid The class $\omega \in H^1(M;\BZ)$ is said to be \emph{fibered} if it corresponds to a fibration. In \cite{thurstnorm}, Thurston studied which cohomology classes are fibered. He proved the following theorem. 

\begin{theorem} (\textup{Thurston}, \cite{thurstnorm}) Let $M$ be as above. Then there exists a semi-norm on $H^1(M;\BR)$ with a polyhedral unit norm ball, and a collection of faces $\CF_1, \ldots, \CF_k$ (called fibered faces) of this ball such that the fibered cohomology classes of $M$ are precisely the primitive $\BZ$-points in the interiors of the cones $\BR_{+}\CF_1, \ldots, \BR_{+}\CF_k$.
\end{theorem}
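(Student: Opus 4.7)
The plan is to build Thurston's semi-norm on $H^1(M;\BR)$ via Poincar\'e--Lefschetz duality from a complexity function on properly embedded oriented surfaces, then extract the polyhedral structure and characterize the fibered locus inside it. For a primitive integral class $\alpha \in H_2(M,\partial M;\BZ)$ I would set $\|\alpha\| = \min_{[S]=\alpha} \chi_-(S)$, where $\chi_-(S) = \sum_i \max(0, -\chi(S_i))$ runs over the connected components of a properly embedded oriented surface $S$ dual to $\alpha$. The first block of work is to verify the semi-norm axioms: homogeneity $\|n\alpha\| = |n|\cdot \|\alpha\|$ follows from taking $n$ parallel copies of a minimizer and ruling out cheaper representatives via an innermost-disk / outermost-arc surgery, while subadditivity $\|\alpha + \beta\| \le \|\alpha\| + \|\beta\|$ is obtained by the double curve sum, putting minimizers in transverse position and resolving intersections consistent with coorientations, then discarding any sphere or disk components before applying $\chi_-$.

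Next I would extend by homogeneity to $H^1(M;\BQ)$ and continuously to $H^1(M;\BR)$, and establish polyhedrality of the unit ball. After fixing a triangulation of $M$, any $\chi_-$-minimizer in a class of bounded norm can be isotoped into normal form, and Haken's finiteness theorem produces finitely many fundamental normal surfaces from which every minimizer is assembled additively. Taking the cohomology classes dual to these finitely many building blocks expresses $\|\alpha\|$ as the maximum of finitely many linear functionals evaluated on $\alpha$, so the unit ball is the intersection of finitely many half-spaces, i.e.\ a (possibly noncompact) polyhedron.

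For the fibered portion I would argue as follows. If $\omega$ is fibered with fiber $S$, then $S$ realizes $\|\omega\|$: a fiber is incompressible, and one checks directly that $\chi_-$ cannot be lowered by any surgery. Fiberedness of a cohomology class is equivalent to its being representable by a nonsingular closed $1$-form, and such forms constitute an open subset of the space of all closed $1$-forms. Hence the fibered classes form an open subcone $U \subset H^1(M;\BR) \setminus \{0\}$. On $U$ the norm is linear, since the Euler characteristic of the fiber varies linearly with $\omega$ along a smooth family of fibrations, so $U$ is contained in the union of open cones on top-dimensional faces of the unit ball.

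The main obstacle will be proving that each connected component of $U$ is the \emph{entire} open cone on such a face, rather than a proper open subcone. Equivalently, one must show that any two nonsingular closed $1$-forms whose cohomology classes lie in the same open face-cone can be joined by a path of nonsingular closed $1$-forms. This is the technical heart of Thurston's original argument: the strategy is to approximate any class in the cone by rational classes, to observe that primitive integer fibered classes lying in the interior of a single face have fibers that fit together compatibly (after scaling they are transversely isotopic), and to exploit convexity of $\|\cdot\|$ along fibered cones together with the openness of the fibered set to rule out non-fibered classes in the interior of the cone, reducing the question to a local deformation problem for zero-sets of closed $1$-forms.
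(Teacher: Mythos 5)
The paper does not prove this statement at all: it is quoted as background from Thurston's memoir \cite{thurstnorm}, so your proposal can only be measured against Thurston's original argument. Against that benchmark, the first genuine gap is in the polyhedrality step. You propose to get it from normal-surface theory: Haken finiteness gives finitely many fundamental surfaces, and you assert that this ``expresses $\|\alpha\|$ as the maximum of finitely many linear functionals evaluated on $\alpha$.'' That inference does not go through: the norm is defined as a \emph{minimum} of complexities over representatives, and a Haken-sum decomposition of minimizers would at best exhibit it as the value of an integer program, not as a maximum of linear forms --- the max-of-linear-functionals description is the dual formulation of polyhedrality, i.e.\ exactly what is to be proved. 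Thurston's actual mechanism is different and more elementary: he shows the seminorm takes integer values on the lattice $H_2(M,\partial M;\BZ)$ and then invokes a general lemma that a seminorm on $\BR^n$ which is integer-valued on $\BZ^n$ has unit ball cut out by finitely many integral linear inequalities. Your sketch never uses integrality anywhere, so the polyhedral structure is unsupported (a normal-surface route to the face structure does exist in later literature, but it is considerably more delicate than what you outline).

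The second gap is in the fibered-face statement, which you correctly single out as the heart of the theorem but do not actually resolve. Two ingredients are missing. First, the claim that a fiber realizes the norm (``one checks directly that $\chi_-$ cannot be lowered by any surgery'') is not a proof; Thurston certifies minimality with a dual linear functional, namely the Euler class $e$ of the plane field tangent to the fibration, which satisfies $\|\alpha\| \geq |\langle e, \alpha\rangle|$ for every class $\alpha$, with equality for the fibered class. Second, and decisively, that same certificate is what forces the norm to be linear on an entire top-dimensional face and is the tool that upgrades openness of the fibered set to the statement that \emph{every} primitive integral class in the open cone over that face is fibered: perturbing the nonsingular closed $1$-form of the fibration shows classes $n\omega + \alpha$ are fibered for large $n$, and the equality case of the Euler-class inequality propagates fiberedness across the whole open cone $\BR_+\CF_i$. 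Your proposed resolution --- rational approximation, fibers being ``transversely isotopic after scaling,'' and ``convexity \ldots to rule out non-fibered classes'' --- names no mechanism that would exclude a non-fibered primitive integral class in the interior of the cone, so the characterization of the fibered classes as precisely the interior lattice points of the cones remains unproven in your sketch.
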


\nid In \cite{thurston}, Thurston proved that the manifold $M$ is hyperbolic if and only if the monodromy $\psi$ is pseudo-Anosov. In particular, if the monodromy of one fibered class is pseudo-Anosov, then the monodromy of every fibered class is pseudo-Anosov.  Furthermore, he showed that in this case the semi-norm described above is actually a norm. 

There are two important poly-exponentials that can be associated to such a manifold $M$. Let $G = \BZ[H_1(M, \BZ) / \tup{torsion}]$.  Given  $P = \sum_g a_g g \in G$, and $\omega \in H^1(M; \BR)$, we can form a Laurent polynomial: $$P_{(\omega)}(t) = \sum_g a_g t^{\omega(g)} $$
In this way, we can naturally associate a poly-exponetial to P, which we'll call $Q_P$. The poly-exponentials discussed below will be of this form. 

\subsubsection{The Teichm\"uller polynomial}\label{Section2.1.1}

\nid One of the most important invariants assigned to a pseudo-Anosov mapping class $\psi$ is its dilatation, $\rho(\psi)$. In \cite{Friedzeta}, Fried investigated the relation between $\rho(\psi)$ and $\omega$. In particular, let $M$ be a compact, oriented, hyperbolic 3-manifold that fibers over the circle, and let $\CF$ be a fibered face. Given a fibered class $\omega \in \BR_+ \CF$, let $\rho(\omega)$ be the dilatation of its monodromy. Fried proved the following theorem. 
\begin{theorem} (\textup{Fried} \cite{Friedzeta})
The function $\Lambda(\omega) = log \rho(\omega)$ can be extended to a real analytic, convex, degree $-1$ homogeneous, function on $\BR_+\CF$. Furthermore, $$\lim_{\omega \to \partial \BR_+ \CF} \Lambda(\omega) = \infty$$
\end{theorem}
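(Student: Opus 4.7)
My approach is to realize $\rho(\omega)$ as the unique Perron--Frobenius root of a specialization of McMullen's Teichm\"uller polynomial, and then derive the four claims---real analyticity, degree $-1$ homogeneity, convexity, and blow-up at the boundary---from general properties of such roots, with a single transfer-operator input for the convexity step.

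\textbf{Setup.} I invoke McMullen's Teichm\"uller polynomial $\Theta_\CF \in \BZ[H_1(M;\BZ)/\tup{torsion}]$, constructed so that for every primitive integer $\omega$ in the interior of $\BR_+\CF$, the specialization $\Theta_{\CF,(\omega)}(t)$ is a Laurent polynomial whose largest real root equals $\rho(\omega)$, is simple, and is strictly dominant in absolute value. Substituting $t = e^w$ gives the poly-exponential $Q_\Theta(w,\omega) = \sum_g a_g e^{\omega(g)w}$ in the language of this paper, and the Perron--Frobenius property persists throughout the open cone by continuity, so its unique simple real dominant root $w_0(\omega)$ extends $\Lambda(\omega) = \log\rho(\omega)$ from primitive integer $\omega$ to all real $\omega \in \BR_+\CF$.

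\textbf{Analyticity and homogeneity.} Simplicity of $w_0(\omega)$ as a root of $Q_\Theta(\cdot,\omega)$ gives $\partial_w Q_\Theta(w_0(\omega),\omega) \neq 0$, so $w_0$ is real analytic in $\omega$ by the implicit function theorem. Degree $-1$ homogeneity is the identity $Q_\Theta(w/c, c\omega) = Q_\Theta(w,\omega)$, which forces $w_0(c\omega) = c^{-1}w_0(\omega)$.

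\textbf{Convexity} is the main obstacle. I would realize $\rho(\omega)$ as the Perron eigenvalue of a family $A(\omega)$ of non-negative matrices coming from the action of the monodromy on an invariant train track lifted to the free abelian cover of $M$; the entries of $A(\omega)$ are of the form $e^{\omega(g)}$ with $g \in H_1$. Then
$$ \trace(A(\omega)^k) \;=\; \sum_{h \in H_1} c_h\, e^{\omega(h)}, \qquad c_h \in \BZ_{\geq 0}, $$
is a nonnegative combination of exponentials of linear functionals of $\omega$. By the log-sum-exp inequality, its logarithm is convex in $\omega$, hence so is $\tfrac{1}{k}\log \trace(A(\omega)^k)$, and the Perron--Frobenius property yields
$$ \tfrac{1}{k}\log \trace(A(\omega)^k) \;\longrightarrow\; \Lambda(\omega) $$
pointwise as $k \to \infty$. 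A pointwise limit of convex functions is convex, giving the desired conclusion.

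\textbf{Blow-up at the boundary.} Thurston's theorem shows that if $\omega^* \in \partial \BR_+\CF$, then $\omega^*$ is not fibered. To turn this into the quantitative statement $\Lambda(\omega) \to \infty$, I would apply Corollary \ref{Corollary1} to $Q_\Theta$: at every limit point $(p,q)$ in the $\iota$-compactification of $\BR_+\CF$, either the Perron--Frobenius root escapes to infinity at one of the two explicit exponential rates, or it stays bounded and the limit poly-exponentials $p$, $q$ are cyclotomic. The latter alternative would force $\rho(\omega^*) = 1$, making $\omega^*$ look like a fibered class with trivial dilatation, in contradiction with $\omega^* \notin \BR_+\CF$; so the escape alternative always holds. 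This simultaneously yields $\Lambda(\omega) \to \infty$ and pins down its rate of divergence at the boundary.
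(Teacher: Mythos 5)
The paper offers no proof of this statement at all: it is quoted as Fried's theorem from \cite{Friedzeta}, so your sketch is an external reconstruction rather than a rival to an internal argument. Judged on its own it has two genuine gaps, both at the steps you yourself flag as the hard ones. The first is convexity. You identify $\Lambda(\omega)$ with $\log\lambda_{PF}(A(\omega))$ for a fixed-size nonnegative matrix family whose entries are of the form $e^{\omega(g)}$, and then claim $\tfrac1k\log\trace(A(\omega)^k)\to\Lambda(\omega)$. That identification cannot be correct: along a ray $\omega\mapsto c\omega$ the entries of $A(c\omega)$ are the $c$-th powers of those of $A(\omega)$, so $\log\lambda_{PF}(A(c\omega))$ (equivalently the trace limit, which is asymptotically a maximum of linear functionals of $\omega$) does not scale like $c^{-1}$ --- yet you proved $\Lambda(c\omega)=c^{-1}\Lambda(\omega)$ two paragraphs earlier. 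The true relation is implicit: writing the Teichm\"uller polynomial as $\det(uI-P(y))$ with $P$ a transition matrix over $\BZ_{\geq 0}[G]$, the number $s=\Lambda(\omega)$ is characterized by $e^{s\,\omega(u)}=\lambda_{PF}\bigl(P(e^{s\,\omega(\cdot)})\bigr)$. Your trace/log-sum-exp argument does prove that $\omega\mapsto\log\lambda_{PF}\bigl(P(e^{\omega(\cdot)})\bigr)$ is convex, and convexity of $\Lambda$ can then be recovered from convexity of the sublevel set $\{\omega:\log\lambda_{PF}(P(e^{\omega(\cdot)}))\leq\omega(u)\}=\{\Lambda\leq 1\}$ together with homogeneity; but that bridge is exactly what is missing, and the trace-limit formula as written is false.

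The second gap is the blow-up at $\partial\BR_+\CF$. Corollary \ref{Corollary1} and the remark following Remark \ref{Remark1} give only a dichotomy for a general integral poly-exponential: at a boundary point either some root escapes at an explicit rate, or the limiting specializations are cyclotomic and all roots stay bounded. Nothing in that general machinery excludes the bounded alternative for the Teichm\"uller polynomial at a given boundary point --- the bounded alternative genuinely occurs for other natural poly-exponentials (e.g.\ Alexander polynomials with cyclotomic specializations), so the exclusion must come from dynamics. Your attempted exclusion (``$\rho(\omega^*)=1$ would make $\omega^*$ look like a fibered class with trivial dilatation'') is not a contradiction: boundary classes are simply not fibered, and a cyclotomic boundary limit does not manufacture a fibration, so Thurston's theorem is never contradicted. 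Even in the escape alternative you would still have to show the escaping root is the real dominant one with $\mathfrak{Re}\to+\infty$ (not a root fleeing into the left half-plane, which Corollary \ref{Corollary1} equally permits) before concluding $\Lambda\to\infty$; for that one needs either the palindromic symmetry of $\Theta$ or, more honestly, an independent input such as Fried's cross-section/orbit-counting argument or McMullen's comparison of $1/\Lambda$ with the Thurston norm. A smaller soft spot: ``the Perron--Frobenius property persists throughout the open cone by continuity'' is glib given that Theorem \ref{Theorem2} of this very paper shows spectral-radius-type functions need not vary continuously; the correct extension is the implicit-function-theorem argument along the real dominant root that the introduction alludes to, and your analyticity/homogeneity paragraph is fine once that is said.
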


\nid In \cite{McMullenP}, McMullen extended this theory, and brought it into the realm of poly-exponentials. To any pseudo-Anosov element $\varphi \in \tup{Mod}(S)$, one can associate an invariant train track $\tau \subset S$ (see \cite{McMullenP} for definitions, and further information). Let $T(\tau)$ denote the $\BZ$-module generated by the edges of $\tau$, modulo the relations: 

$$[e_1] + \ldots + [e_r] = [e_1'] + \ldots + [e_s'] $$
for each vertex $v$ of $\tau$, where $e_1, \ldots e_r$ are the incoming edges at $v$, and $e_1', \ldots, e_s'$ are the outgoing edges. The pseudo-Anosov $\varphi$ induces an action $\varphi^*$ on $Z_1(\tau, \BR) = \tup{Hom}(T(\tau), \BR)$. Let $\xi = \xi_\varphi$ be the characteristic polynomial of this action, then $\xi$ is a Perron-Frobenius polynomial, and its Perron-Frobenius root is equal to the dilatation of $\varphi$.. \\ 

\smallskip

\nid One of the results McMullen proved is the following. 

\begin{theorem} (\textup{McMullen, \cite{McMullenP}}) Let $M$ be a compact, oriented, hyperbolic $3$-manifold, and let $\CF$ be a fibered face. In the notation above, there exists $\Theta \in G$, called the \emph{Teichm\"uller polynomial of $\CF$}, such that for any fibered $\omega \in \BR_+ \CF$, $\exists n \geq 0$ such that $$t^n\Theta_{(\omega)} = \xi_{\varphi(\omega)}$$
where $\varphi(\omega)$ is the monodromy associated to $\omega$.
\end{theorem}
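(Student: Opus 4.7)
The plan is to follow McMullen's strategy of constructing $\Theta$ on the maximal torsion-free abelian cover of $M$ and then recovering $\xi_{\varphi(\omega)}$ by specialization. First I would pick a distinguished primitive fibered class $\omega_0$ in the interior of $\BR_+\CF$, together with a fiber $S_0$, the pseudo-Anosov monodromy $\varphi_0 = \varphi(\omega_0)$, and an invariant train track $\tau_0 \subset S_0$ carrying the stable foliation. The suspension of $\tau_0$ under the flow of the mapping torus structure for $\omega_0$ produces a branched surface $B \subset M$. The essential geometric input, due to Fried, is that fibrations corresponding to nearby cohomology classes in $\BR_+\CF$ are recovered as cross-sections of the suspension flow, and hence their invariant train tracks are realized as $B \cap S_\omega$ for appropriate cross-sectional fibers $S_\omega$. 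Thus a single branched surface $B$ encodes the train-track data for the entire fibered cone.

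Next I would pass to the maximal torsion-free abelian cover $\pi\colon \tilde{M} \to M$, with deck group $G = H_1(M,\BZ)/\text{torsion}$, and lift $B$ to an equivariant branched surface $\tilde{B} \subset \tilde{M}$. The edges of $\tilde{B}$ form a free $\BZ[G]$-module, and the vertex relations used to define $T(\tau)$ have natural lifts; let $T(\tilde B)$ be the resulting $\BZ[G]$-module. Because the flow preserves $\tilde{B}$ equivariantly, it induces an endomorphism $\Phi$ of $T(\tilde B)$ that is $\BZ[G]$-linear. I would then define
$$\Theta \;=\; \det\bigl(t \cdot \mathrm{Id} - \Phi\bigr) \,\in\, \BZ[G][t],$$
and, by evaluating at $t=1$ (or suitably rescaling), package it as an element of $G$ in the Laurent sense used by the paper.

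For the specialization statement, given any fibered $\omega \in \BR_+\CF$, the homomorphism $\omega\colon G \to \BZ$ induces the ring map $\BZ[G] \to \BZ[t,t^{-1}]$ by $g \mapsto t^{\omega(g)}$. Under this map, $T(\tilde B) \otimes_{\BZ[G]} \BZ[t,t^{-1}]$ is identified (as a $\BZ[t,t^{-1}]$-module with induced endomorphism) with the cellular chain complex $T(\tau_\omega)$ of the cross-sectional train track together with the action of $\varphi(\omega)^*$; this identification uses the cross-section description of the fibration associated to $\omega$ provided by Fried. Taking characteristic polynomials commutes with this tensor product up to a unit in $\BZ[t,t^{-1}]$, i.e.\ up to a factor $t^n$, giving $t^n \Theta_{(\omega)} = \xi_{\varphi(\omega)}$.

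The main obstacle is Step~2: verifying that a single branched surface $B \subset M$ really does carry every pseudo-Anosov monodromy in the fibered cone, and that the cross-sectional train tracks $\tau_\omega$ it produces coincide with genuine invariant train tracks for $\varphi(\omega)$. This is the heart of Fried's cross-section theory and is where the pseudo-Anosov hypothesis on $\varphi_0$ (and hence on all $\varphi(\omega)$ in $\CF$) is used. Once this is in place, the module-theoretic bookkeeping — checking that edges, vertex relations, and monodromy action all base-change correctly under $\BZ[G] \to \BZ[t,t^{-1}]$ — is essentially formal.
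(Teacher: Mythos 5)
The paper does not prove this statement: it is quoted as background, with attribution to McMullen, so there is no internal proof to compare yours against. As a reconstruction of McMullen's original argument your outline is essentially faithful: suspending the invariant train track to a branched surface, invoking Fried's cross-section theory to realize every fibration in $\BR_{+}\CF$ as a cross-section of the same flow, lifting to the maximal free abelian cover with deck group $G$, and specializing via $g \mapsto t^{\omega(g)}$ are exactly McMullen's ingredients, and you correctly identify the load-bearing step (that one branched surface carries the invariant train tracks of \emph{all} monodromies in the cone) as the place where the pseudo-Anosov hypothesis and Fried's theory are used. One piece of your packaging is off: one does not ``evaluate at $t=1$.'' In McMullen's construction $t$ is itself a distinguished element of $G$ (dual to the chosen fibration), so after fixing a splitting $G \cong \BZ \oplus H$ the expression $\det(t\,\mathrm{Id} - \Phi)$, with $\Phi$ a matrix over $\BZ[H]$, already lies in $\BZ[G]$; moreover McMullen's $\Theta$ is really the ratio $\det(tI - P_E)/\det(tI - P_V)$ of edge and vertex (switch) determinants, which matches the characteristic polynomial on your quotient module $T(\tilde B)$ only after one verifies that this quotient behaves well (is free, or that the ratio is a genuine Laurent polynomial). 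These are fixable bookkeeping points rather than gaps in the strategy.
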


McMullen, in a somewhat different language,  studied the properties of the Perron-Frobenius root of $\Theta_{(\omega)}$,  showing that it has to go to $\infty$ at $\partial \BR_+\CF$ and that its $\log$ extends real-analytically to a convex function. Our results extend this analysis by studying all other roots, as well as providing the rates at which the Perron-Frobenius root goes to $\infty$. 

\subsubsection{The Alexander Polynomial}\label{Section2.1.2}

Let $M$ be as above. Let $\Delta = \Delta_M \in G$ be its (multivariate) Alexander polynomial, originally defined by Fox (see \cite{Fox}) One well known fact about the Alexander polynomial is the following. Suppose $\omega \in H^1(M; \BZ)$ is a fibered cohomology class, with fiber $S$ and monodromy $\varphi: S \to S$. Then up to multiplication by $t^n$ for some $n$, $\Delta_{(\omega)}(t)$ is the characteristic polynomial of the action of $\varphi$ on $H_1(S, \BC)$.

\subsection{Poly-exponentials in $\tup{Out}(F_n)$ theory}\label{Section2.2}
There is a well known, and much studied, connection between mapping class groups of closed, oriented surfaces and $\tup{Out}(F_n)$ - the outer automorphism groups of free groups. Quite often, results concerning mapping class groups can be generalized, with appropriate modifications, to the $\tup{Out}(F_n)$ setting. Several recent papers have focused on carrying out this program for the set of results described in section \ref{Section2.1.1}.

Let $\varphi \in \aut$ be an automorphism of a free group. Form the semi-direct product $\Gamma = F_n \rtimes_\varphi \BZ$. The group $\Gamma$ may be decomposable as a free-by-cyclic semi-direct product (where the free group has finite rank) in more than one way. To each such decomposition, one can associate a homomorphism $\omega \in H^1(\Gamma; \BZ)$, and a monodromy $\varphi_\omega \in \tup{Aut}\big(\tup{Ker}(\omega)\big)$, where $\tup{Ker}(\omega)$ is a finitely generated free group. By analogy to the $3$-manifold case, call such a cohomology class fibered. 

 Bieri, Neumann, and Strebel proved that the there is an open cone in $H^1(\Gamma, \BR)$ whose integral, primitive points were exactly the set of fibered classes (\cite{BNS}). In \cite{DLK} Dowdall, Kapovich, and Leininger were able to show that on a smaller cone, these fibers could be studied geometrically. They restricted to the case where $\varphi$ hyperbolic and fully irreducible (see \cite{} for definitions). They produced a $K(\Gamma, 1)$-space $X_\varphi$ which they called the folded mapping torus of $\varphi$, equipped with a semi flow $\psi$. They then produced an open cone $\CA \subset H^1(\Gamma; \BR)$, such that  any integral, primitive $\omega \in \CA$ is fibered. Furthermore, any such $\omega$ is dual to a graph which is a cross-section of $\psi$, has fundamental group $\tup{Ker}(\omega)$, and a first return map $\varphi_\omega$ which is hyperbolic and fully irreducible. They then proved that the function $\omega \to \mathfrak{h}(\omega)$, assigning to each fibered form the log of the dilatation of $\varphi_\omega$ could be extended real-analytically to a convex, degree $-1$ homogeneous function. 
 
Given the similarities to the $3$-manifold theory situation, it is natural to  ask if there is an analog of the Teichm\"uller polynomial for this setting. This question was answered separately by Dowdall, Kapovich, Leininger in \cite{DLK2}, and by Algom-Kfir, Hironaka, and Rafi in \cite{YEK}.  Let $G = H_1(\Gamma) / \tup{torsion}$. Each group produced an element $\mathfrak{m}, \Theta \in \BZ[G]$, with the property that for every $\omega \in \CA$, the spectral radius of $\mathfrak{m}_{(\omega)}$ and $\Theta_{(\omega)}$ is equal to $\mathfrak{h}(\omega)$. 

As shown in \cite{DLK2}, the polynomial $\Theta$ is a factor of $\mathfrak{m}$, but the two are not equal in general. The polynomials $\mathfrak{m}$ and $\Theta$ have separate important features in the context of our theory of poly-exponentials. Namely, the following properties hold for them.   

\begin{theorem} (\textup{Algom-Kfir, Hironaka, and Rafi}, \cite{YEK}) There is a cone $\CA \subset \CT$ on which $\Theta$ is Perron Frobenius. In this cone, if $\theta' \in \BZ[G]$ satisfies that the spectral radius of $\theta'_{(\omega)}$ is the same as the spectral radius of $\Theta_{(\omega)}$ for every integral $\omega$, then $\Theta$ divides $\theta'$.
\end{theorem}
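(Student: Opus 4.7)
The plan is to establish the two halves of the statement separately: first the existence of the Perron--Frobenius cone $\CA$, and then the minimality of $\Theta$ on $\CA$. For the existence, $\Theta$ is constructed in \cite{YEK} as an invariant of a folded structure for the fully irreducible $\varphi$, and its specializations $\Theta_{(\omega)}$ arise (up to monomial units in $t$) as characteristic polynomials of non-negative integer matrices obtained by weighting a transition matrix according to $\omega$. I would apply the classical Perron--Frobenius theorem to the relevant transition matrix, restricted to the cone on which the matrix is primitive, to obtain an open cone $\CA$ on which each specialization $\Theta_{(\omega)}$ admits a simple, strictly dominant, positive real root $\rho(\omega)$.

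Next, since $\rho(\omega)$ is a simple root of $\Theta_{(\omega)}$, the implicit function theorem gives that $\omega \mapsto \rho(\omega)$ extends to a real-analytic, degree $-1$-homogeneous function on $\CA$. Under the hypothesis, the spectral radius of $\theta'_{(\omega)}$ equals that of $\Theta_{(\omega)}$ for every integral $\omega \in \CA$, and since this radius is attained by a single dominant eigenvalue of $\Theta_{(\omega)}$, it must in fact be a root of $\theta'_{(\omega)}$. Writing out $\theta'_{(\omega)}(\rho(\omega)) = 0$ as a real-analytic function $F(\omega)$ on $\CA$ and using that $F$ vanishes on the set of integer points, which is dense in the open cone $\CA$, I conclude $F \equiv 0$ on all of $\CA$.

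To convert this analytic identity into divisibility in $\BZ[G]$, fix coordinates so $G \cong \BZ^k$ and view $\Theta, \theta' \in \BZ[t_1^{\pm 1}, \ldots, t_k^{\pm 1}]$. The real-analytic map $\omega \mapsto (\rho(\omega)^{n_1(\omega)}, \ldots, \rho(\omega)^{n_k(\omega)})$ then parametrizes a real-analytic subset of $V(\Theta) \cap V(\theta') \subset (\BC^*)^k$ of real dimension equal to $\dim \CA$. This subset is contained in some irreducible component $V_0$ of $V(\Theta)$ and, having positive dimension, it is Zariski dense in $V_0$. Hence $V_0 \subset V(\theta')$, and by the Nullstellensatz together with a Gauss-lemma-type integrality argument, the irreducible factor of $\Theta$ cutting out $V_0$ divides $\theta'$ in $\BZ[G]$.

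The main obstacle is that $\Theta$ could a priori be reducible, so the argument above only recovers divisibility of $\theta'$ by the irreducible factor of $\Theta$ through which the Perron--Frobenius root passes, not by $\Theta$ itself. To close this gap I would exploit the explicit construction of $\Theta$ in \cite{YEK} as the characteristic polynomial of the action of the deck transformation on a specific $\BZ[G]$-module, and argue that the Perron--Frobenius eigenvector is a cyclic generator on every component so that each irreducible factor of $\Theta$ meets the dominant root locus. Making this cyclicity precise, and ruling out the appearance of extraneous factors from the module construction, is the crux of the proof.
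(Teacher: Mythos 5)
This statement is not proved in the paper at all: it is quoted, with attribution, as a background result of Algom-Kfir, Hironaka, and Rafi from \cite{YEK}, so there is no in-paper argument to compare your proposal against. Judged on its own terms, your sketch does capture the standard outline (Perron--Frobenius applied to a primitive transition matrix to get the cone $\CA$, analyticity of $\rho(\omega)$ via simplicity of the dominant root, and a density/vanishing argument to pass from integral $\omega$ to an identity on $\CA$), but it has genuine gaps beyond the one you flag.

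First, the step ``the spectral radius of $\theta'_{(\omega)}$ equals $\rho(\omega)$, hence $\rho(\omega)$ is a root of $\theta'_{(\omega)}$'' does not follow: the dominance and positivity you invoke are properties of $\Theta_{(\omega)}$, not of $\theta'_{(\omega)}$, whose maximal-modulus root could a priori be negative or complex of modulus $\rho(\omega)$. Some additional argument (e.g.\ varying $\omega$ and using continuity/irrationality of the phases, or an argument specific to the cycle-polynomial construction) is needed before you can write $F(\omega)=\theta'_{(\omega)}(\rho(\omega))\equiv 0$. Second, the conversion of $F\equiv 0$ into divisibility is shakier than stated: the real-analytic graph you parametrize has real dimension $\dim\CA$, while an irreducible component $V_0$ of $V(\Theta)$ has real dimension $2(\rank G -1)$, so ``positive dimension implies Zariski dense in $V_0$'' is not automatic and must be argued. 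Third, as you yourself note, even if all of this is repaired you only obtain that the single irreducible factor of $\Theta$ through the dominant-root locus divides $\theta'$; the theorem asserts divisibility by $\Theta$ itself, and the cyclicity claim you propose to close this gap is exactly the content you have not supplied. So the proposal is an incomplete reconstruction of the cited external proof rather than a correct alternative argument.
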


\begin{theorem}(\textup{Dowdall, Kapovich, and Leininger}, \cite{DLK2}) For any fibered $\omega \in \CA$, the polynomial $\mathfrak{m}_{(\omega)}$ is the characteristic polynomial of the train-track transition matrix of $\varphi_\omega$.
\end{theorem}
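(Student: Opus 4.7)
The plan is to build $\mathfrak{m}$ as the characteristic polynomial of a single $\BZ[G]$-linear transition operator on a finitely generated $\BZ[G]$-module associated to the folded mapping torus $X_\varphi$, and then show that specializing along any fibered $\omega \in \CA$ reproduces the transition matrix of the train track associated to the fiber for $\omega$. First I would pass to the maximal free-abelian cover $\widetilde{X}_\varphi \to X_\varphi$ corresponding to $G = H_1(\Gamma)/\text{torsion}$, so that the semi-flow $\psi$ lifts equivariantly and the $1$-cells of $X_\varphi$ lift to a free $\BZ[G]$-basis of a chain module $M$. Then I would define a $\BZ[G]$-linear endomorphism $T: M \to M$ that encodes how each edge flows forward under $\psi$ until it first crosses its image in the original graph, decorated by the $G$-weight recording the deck translation accumulated along the way. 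The polynomial $\mathfrak{m}$ is declared to be the characteristic polynomial $\det(tI - T) \in \BZ[G][t]$, appropriately collected into $\BZ[G]$.

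The second step is the geometric identification. By the results of Dowdall--Kapovich--Leininger, a fibered $\omega \in \CA$ is dual to a cross-section $\Theta_\omega \subset X_\varphi$ of $\psi$, with first-return map $\varphi_\omega: \Theta_\omega \to \Theta_\omega$, and this cross-section carries the invariant train track structure of $\varphi_\omega$ inherited from the transverse structure of $X_\varphi$. I would show that intersecting $\widetilde{\Theta}_\omega := \omega^{-1}(\BZ) \cap \widetilde{X}_\varphi$ with $M$ yields a $\BZ$-basis (after choosing a fundamental domain for the $\BZ$-action generated by the dual of $\omega$) which matches the train-track edges of $\Theta_\omega$; and that tracing $T$ around this cross-section records exactly the edge-to-edge transitions of $\varphi_\omega$. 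The specialization $t^{\omega(g)} \mapsto t^{\omega(g)}$, i.e.\ substituting $g \mapsto t^{\omega(g)}$, converts the characteristic polynomial of $T$ over $\BZ[G]$ into the characteristic polynomial of the $\BZ$-linear endomorphism induced by $\varphi_\omega$ on the train track.

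The main obstacle, I expect, is the bookkeeping of the semi-flow: the first-return time of an edge to the cross-section $\Theta_\omega$ depends on $\omega$, so one must argue that the $G$-decoration of $T$ is universal and that choosing $\omega$ amounts to cutting $\widetilde{X}_\varphi$ into fundamental domains bounded by lifts of $\Theta_\omega$ in a way compatible with the semi-flow. This is where the hypothesis $\omega \in \CA$ is essential --- on this cone the cross-section exists with the expected transverse regularity, so the return map on edges is well-defined and locally constant in $\omega$, allowing the algebraic description to be uniform. Once this compatibility is established, the identity of characteristic polynomials reduces to matrix bookkeeping, since both sides compute the determinant of the same linear map expressed in two bases.
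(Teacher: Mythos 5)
This statement is not proved in the paper at all: it is background, quoted verbatim from Dowdall--Kapovich--Leininger \cite{DLK2} in Section \ref{Section2.2}, so there is no internal argument to compare your proposal against. Judged on its own, your sketch follows the expected McMullen-style template (module over the group ring of the maximal free-abelian cover, flow-induced transition operator, characteristic polynomial, specialization), which is indeed the right family of ideas, but as written it has two genuine gaps.

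First, the definition of $\mathfrak{m}$ is not coherent. You take $M$ to be a free $\BZ[G]$-module on the $1$-cells, $T$ a $\BZ[G]$-linear operator, and set $\mathfrak{m}=\det(tI-T)\in\BZ[G][t]$, ``appropriately collected into $\BZ[G]$.'' But $\mathfrak{m}$ must be an element of $\BZ[G]$, and the specialization $\mathfrak{m}_{(\omega)}$ is obtained by $g\mapsto t^{\omega(g)}$ with no extra variable; to absorb $t$ into the group ring you need a distinguished flow direction in $G$, and then $T$ can only be linear over the group ring of a complementary corank-one subgroup, not over all of $\BZ[G]$ (this is exactly how the Teichm\"uller polynomial is set up). Your sketch conflates these two formulations, so the object you intend to specialize is not actually defined. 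Second, the identification step hides the real content. The train-track transition matrix of $\varphi_\omega$ has size equal to the number of edges of the cross-section $\Theta_\omega$, which varies with $\omega$ inside $\CA$, so its characteristic polynomial has varying degree; a fixed operator on a fixed finite-rank module cannot have its characteristic polynomial literally equal to these without a restriction-of-scalars determinant identity (relating a determinant over the big group ring to a $\BZ$-determinant of a much larger matrix built from a fundamental domain cut along $\Theta_\omega$) together with the geometric input that the first-return transitions of the $\omega$-dependent cross-section are computed by the $\omega$-independent operator $T$. Your claim that the return map is ``locally constant in $\omega$'' is false as stated --- even the size of the transition matrix changes with $\omega$ --- and it is precisely at this point that the positivity/flow-regularity hypotheses defining the cone $\CA$ in \cite{DLK,DLK2} must be invoked, rather than merely mentioned.
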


\subsection{Poly-exponentials in graph theory}\label{Section2.3}

Let $\Gamma$ be a finite directed graph, possibly with loops and parallel edges. Let $\ell$ be a metric on $\Gamma$, given by a function $\ell: E(\Gamma) \to \BR_{>0}$. If $\ell(e) \in \BN$
 for every edge $e$, it makes sense to construct a graph $\Gamma_\ell$ by subdividing each edge, $e$, into $\ell(e)$ edges, each inheriting the orientation from $e$. Let $A_\ell$ be the adjacency matrix of $\Gamma_\ell$ and let  $\xi_\ell$ be the characteristic polynomial of $A_\ell$. One natural question to ask is - how does $\CZ(\xi_\ell)$ depend on $\ell$? This is analogous to  the well studied problem of how the spectrum of the Laplace-Beltrami operator on a Riemannian manifold depends on the metric. 
 
In \cite{McMPerron}, McMullen defined a  polynomial, which he called the Perron polynomial, in the following way. Suppose $n = \#V(\Gamma)$. Define $A(t, \ell) \in M_n(\BZ[t])$ by setting: 
$$A_{uv}(t, \ell) = \sum_{[e] = (u,v)}t^{\ell(e)}$$
where $[e] = (u,v)$ if $e$ connects $u$ to $v$. The Perron polynomial is equal to $$P(t, \ell) = \det (I - A(t, \ell))$$

\nid McMullen then proved the following theorem: 
\begin{theorem}(\textup{McMullen} \cite{McMPerron}) If $\Gamma$ has at least one closed path, then the smallest positive zero of $P_\ell$ is equal to the spectral radius of $\xi_\ell$.
\end{theorem}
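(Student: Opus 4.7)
The plan is to relate the roots of $P_\ell$ directly to the eigenvalues of the adjacency matrix $A_\ell$, and then to invoke the classical Perron--Frobenius theorem. Since $\Gamma$ contains a closed path, $A_\ell$ is a non-negative matrix that is not nilpotent, so Perron--Frobenius guarantees that its spectral radius is a non-negative real eigenvalue; this distinguished eigenvalue should match the smallest positive root of $P_\ell$ through the spectral correspondence developed below.

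The core step is a local eigenvector analysis on subdivided edges. Let $\lambda$ be a nonzero eigenvalue of $A_\ell$ with eigenvector $v$. If $e$ is an edge of $\Gamma$ from $u$ to $w$ whose subdivision is $u = x_0, x_1, \ldots, x_{\ell(e)} = w$, then each interior $x_i$ has a unique outgoing edge, so the eigenvalue equation forces the recursion $v_{x_{i+1}} = \lambda v_{x_i}$. Iterating from $w$ backward yields $v_{x_i} = \lambda^{i - \ell(e)} v_w$. Evaluating $A_\ell v = \lambda v$ at an original vertex $u \in V(\Gamma)$ and substituting this formula on each edge leaving $u$ gives, after setting $t = 1/\lambda$, the identity $v_u = \sum_{e: u \to w} t^{\ell(e)} v_w$; this is exactly the equation expressing that $v|_{V(\Gamma)}$ is fixed by $A(t,\ell)$, so $P_\ell(t) = 0$. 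The reverse construction is immediate: an eigenvalue-one eigenvector of $A(t,\ell)$ extends to an eigenvector of $A_\ell$ with eigenvalue $1/t$ by setting $v_{x_i} = t^{\ell(e) - i} v_w$.

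Combining this dictionary with Perron--Frobenius then proves the theorem: the Perron eigenvalue of $A_\ell$ corresponds, under the correspondence $t \leftrightarrow 1/\lambda$, to the smallest positive real root of $P_\ell$, since maximizing $|\lambda|$ is equivalent to minimizing $|t|$. The main technical care needed is a few bookkeeping points: edges with $\ell(e) = 1$ have no interior vertices, so the recursion must be interpreted correctly there; one has to confirm that no nonzero eigenvalue of $A_\ell$ is witnessed only by a vector supported on interior vertices, since such a vector would force values at both $\Gamma$-endpoints to vanish and would then have to vanish identically; and multiplicities must be tracked in order to promote the correspondence from a bijection of sets to an equality of spectra with multiplicity. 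None of these are serious obstructions, but they are the places where a careful proof differs from the sketch above.
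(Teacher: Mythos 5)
Your proposal is correct, and its core step---pushing an eigenvector of $A_\ell$ down the subdivided edges via $v_{x_i}=\lambda^{i-\ell(e)}v_w$ to get the fixed-point equation $\phi=A(\lambda^{-1},\ell)\phi$, together with the observation that a nonzero eigenvector cannot be supported only on interior vertices---is essentially word-for-word the paper's proof of Proposition \ref{Proposition4}; since the paper only quotes the theorem itself from McMullen, completing the argument with the classical Perron--Frobenius theorem for the non-negative, non-nilpotent matrix $A_\ell$ is a perfectly sound way to recover it. One caveat on wording rather than on your argument: what the correspondence $t=\lambda^{-1}$ actually yields is that the smallest positive zero of $P_\ell$ is the \emph{reciprocal} $1/\rho(A_\ell)$ of the spectral radius (compare the computation in Section \ref{Sec4.1}, where the smallest positive zero is about $0.30$ while the spectral radius is its inverse), so your proof establishes the intended form of McMullen's statement rather than its literal phrasing here.
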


We begin by extending this theorem to apply to the entire spectrum. 

\begin{prop}\label{Proposition4} In the notation above, for any $0 \neq \lambda \in \BC$: $$\xi_\ell(\lambda) = 0 \iff P(\lambda^{-1}, \ell) = 0 $$

\end{prop}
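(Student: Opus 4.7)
The plan is to eliminate the subdivision vertices from the eigenvalue equation, reducing the spectrum of the (potentially huge) matrix $A_\ell$ to the zero set of a single polynomial equation on the original vertex set. The structural fact that makes this work is that every intermediate subdivision vertex has in-degree and out-degree exactly $1$, so along each subdivided edge the equation $A_\ell v = \lambda v$ becomes a first-order recurrence that back-solves uniquely once $\lambda \neq 0$.

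First I would fix $\lambda \neq 0$ and decompose $V(\Gamma_\ell) = V(\Gamma) \sqcup W$, where $W$ collects the subdivision vertices. For each original edge $e : u \to v$ of length $k = \ell(e)$, label the corresponding path as $u = w_0, w_1, \ldots, w_{k-1}, w_k = v$. Applying $A_\ell v = \lambda v$ at each $w_i$ with $1 \leq i \leq k-1$ gives $\lambda v_{w_i} = v_{w_{i+1}}$, and hence
\[
v_{w_i} \;=\; \lambda^{-(k-i)}\, v_v \qquad (1 \leq i \leq k-1).
\]
So the $W$-entries of any $\lambda$-eigenvector are determined by its $V(\Gamma)$-entries, and the restriction $v \mapsto v|_{V(\Gamma)}$ is injective on the $\lambda$-eigenspace.

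Next I would substitute these values into the eigenvalue equation at an original vertex $u$. Using that the first out-neighbor of $u$ along $e$ contributes $\lambda^{-(\ell(e)-1)} v_v$, the equation $\lambda v_u = \sum_{e \text{ out of } u} v_{w_1^e}$ collapses (after dividing by $\lambda$) to
\[
v_u \;=\; \sum_{v \in V(\Gamma)} \sum_{[e]=(u,v)} \lambda^{-\ell(e)}\, v_v \;=\; \bigl(A(\lambda^{-1},\ell)\, v|_{V(\Gamma)}\bigr)_u,
\]
i.e., $(I - A(\lambda^{-1},\ell))\, v|_{V(\Gamma)} = 0$. The converse---extending any nonzero kernel element back to a genuine $\lambda$-eigenvector---is immediate from the same formula, and a direct check confirms that the eigenvalue equations at intermediate vertices are then automatically satisfied. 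Together these give a bijection between the $\lambda$-eigenspace of $A_\ell$ and $\ker(I - A(\lambda^{-1},\ell))$, from which $\xi_\ell(\lambda) = 0 \iff P(\lambda^{-1},\ell) = 0$ follows. I do not anticipate any real obstacle here; the only care needed is the bookkeeping for loops and parallel edges (already absorbed into the sum defining $A(t,\ell)_{uv}$) and the degenerate case $\ell(e) = 1$, which is consistent with the above formulas under the convention $w_1 = v$.
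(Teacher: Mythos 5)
Your proposal is correct and follows essentially the same route as the paper: solve the eigenvalue equation along each subdivided edge to express the values at subdivision vertices as $\lambda^{-h}$ times the value at the terminal original vertex, then collapse the equation at original vertices to $(I-A(\lambda^{-1},\ell))\,v|_{V(\Gamma)}=0$ and extend back. Your explicit remarks on injectivity of the restriction map and on the degenerate cases (loops, parallel edges, $\ell(e)=1$) are slightly more careful bookkeeping than the paper's, but the argument is the same.
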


\begin{proof} The adjacency matrix $A_\ell$ acts on the space $W = \BC^{V(\Gamma_\ell)}$, by identifying elements of $W$ as column vectors and matrix multiplication. Let $f \in W$ be a $\lambda$-eigenvector of $A_\ell$. This happens if and only if $\forall u \in V(\Gamma_\ell)$: 

$$\sum_{[e] = (u,v) } f(v) = \lambda f(u) $$

View the set $V(\Gamma)$ as a subset of $V(\Gamma_\ell)$. For every $v \in V(\Gamma_\ell)$, define $\tau(v)$ to be the closest element of $V(\Gamma)$ to which $v$ can be connected by a path in $\Gamma_\ell$. Let $h(v)$ be the length of this shortest path. So, if $v \in V(\Gamma)$, we have that $\tau(v) = v$, and $h(v) = 0$. On the other hand, if $v \notin V(\Gamma)$, then $v$ is contained in a unique edge of $\Gamma$, and $\tau(v)$ is the endpoint of that edge. 

Suppose $v \in V(\Gamma_\ell) - V(\Gamma)$, and $h(v)= 1$. The vertex $v$ is connected to one vertex: $\tau(v)$, by one edge. Plugging into the above equation, we get: $\lambda f(v) = f(\tau(v))$. Applying the same reasoning, for any $v \in V(\Gamma_\ell) - V(\Gamma)$, we have that: 
$\lambda^h f(v) = f(\tau(v)) $ , or in other words $f(v) = \lambda^{-h} f(\tau(v))$.  For $v \in V(\Gamma)$ we get the following equation: 

$$f(v) = \sum_{e \in E(\Gamma), [e] = (v,u)} \lambda^{-\ell(e)} f(u) $$
Let $\phi \in \BC^{\#V(\Gamma)}$ be the function $f|_{V(\Gamma)}$. Then the above equation, taken over all $v$, can be written: $\phi = A(\lambda^{-1},\ell) \phi$. Notice that if this equation has a solution, then it can be extended to an $A_\ell$ eigenvector. In other words, $\xi_\ell(\lambda) = 0$ if and only if $1$ is an eigenvalue of $A(\lambda^{-1}, \ell)$, which happens if and only if $P(\lambda^{-1},\ell) = 0$.
\end{proof}

Note that by replacing $P$ with the corresponding poly-exponential, we can consider values of $\ell$ which do not assign integer lengths to edges. 
\smallskip

We can extend Proposition \ref{Proposition4} further, by providing poly-exponentials whose relationship to $P$ is similar to the relationship between the Teichm\"uller polynomial and the Alexander polynomial. 

Fix a graph automorphism $T: \Gamma \to \Gamma$. For some values of $\ell$ the automorphism $T$ can be extended to a graph automorphism of $\Gamma_\ell$. Let $V_T$ be the set of all such values.  The automorphism $T$ acts on the space $W$ defined above. Confusing $T$ with this action, we get that $T A_\ell T^{-1} = A_\ell$. Fix $\lambda_0$, an eigenvalue of $T$. Every $\lambda_0$ eigenvector for $T$ is an eigenvector for $A_\Gamma$. Let $Z(\ell, \lambda_0)$ be the set of $A_\ell$ eigenvalues that arise in this way. 

\begin{prop}\label{Proposition5} There exists a polynomial $P_{T, \lambda_0}$, such that for every $\ell \in V_T$: 
$$P_{T, \lambda_0}(\lambda^{-1}, \ell) = 0 \iff \lambda \in Z(\ell, \lambda_0)$$

\end{prop}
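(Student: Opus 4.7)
The plan is to adapt the proof of Proposition~\ref{Proposition4} while tracking the $T$-action throughout, and to construct $P_{T,\lambda_0}$ as the determinant describing the restriction of $I - A(t,\ell)$ to the $\lambda_0$-eigenspace of $T$ on $\BC^{V(\Gamma)}$. Since $\ell \in V_T$, the automorphism $T$ extends to $\Gamma_\ell$, acts on $W = \BC^{V(\Gamma_\ell)}$ by permutation of coordinates, and commutes with $A_\ell$. Each $T$-eigenspace is therefore $A_\ell$-invariant, and $Z(\ell,\lambda_0)$ is precisely the spectrum of $A_\ell$ restricted to the $\lambda_0$-eigenspace $W_{\lambda_0} \subseteq W$.

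First I would revisit the reduction from the proof of Proposition~\ref{Proposition4}. For any nonzero $\lambda \in \BC$, the restriction map $f \mapsto \phi := f|_{V(\Gamma)}$ gives a linear bijection from the $\lambda$-eigenspace of $A_\ell$ on $W$ onto $\{\phi \in \BC^{V(\Gamma)} : A(\lambda^{-1}, \ell)\phi = \phi\}$, whose inverse is the explicit extension $f(v) = \lambda^{-h(v)}\phi(\tau(v))$. This bijection is $T$-equivariant: restriction intertwines the $T$-actions because $T$ permutes $V(\Gamma)$, and the extension does too because $T$, being a graph automorphism of $\Gamma_\ell$ that restricts to one of $\Gamma$, satisfies $\tau(Tv) = T\tau(v)$ and $h(Tv) = h(v)$.

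Next I would observe that $T$ and $A(t,\ell)$ commute on $\BC^{V(\Gamma)}$: since $T$ bijects the edges from $u$ to $v$ with those from $Tu$ to $Tv$ and preserves $\ell$, one has $A(t,\ell)_{Tu,Tv} = A(t,\ell)_{uv}$. Thus $A(t,\ell)$ preserves the $\lambda_0$-eigenspace $V_{\lambda_0} \subseteq \BC^{V(\Gamma)}$, and I would define
$$P_{T,\lambda_0}(t,\ell) := \det\bigl((I - A(t,\ell))|_{V_{\lambda_0}}\bigr).$$
Combining the two equivariances, $P_{T,\lambda_0}(\lambda^{-1},\ell) = 0$ iff $I - A(\lambda^{-1},\ell)$ has a nontrivial kernel in $V_{\lambda_0}$, iff there is a nonzero $A_\ell$-eigenvector in $W_{\lambda_0}$ with eigenvalue $\lambda$, iff $\lambda \in Z(\ell,\lambda_0)$. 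The main technical point is verifying the $T$-equivariance of the reduction at the interior vertices of subdivided edges; this is exactly what the hypothesis $\ell \in V_T$ guarantees, since $T$ extending to $\Gamma_\ell$ forces it to permute the subdivision vertices of each original edge compatibly with both $\tau$ and $h$.
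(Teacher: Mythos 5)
Your proposal is correct and takes essentially the same route as the paper: both eliminate the subdivision vertices exactly as in Proposition \ref{Proposition4}, then use the $T$-symmetry to cut the resulting fixed-point system on $\BC^{V(\Gamma)}$ down to the $\lambda_0$-eigendata, and define $P_{T,\lambda_0}$ as the determinant of the reduced operator. The paper merely writes this restriction in concrete coordinates, choosing orbit representatives $w_1,\ldots,w_s$ and forming the matrix $D(\lambda^{-1},\ell)$ with entries $\sum \lambda^{-\ell(e)}\lambda_0^{\mu(v)}$, so its $\det(I-D)$ is your $\det\bigl((I-A(t,\ell))|_{V_{\lambda_0}}\bigr)$ expressed in a basis of the $\lambda_0$-eigenspace.
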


\begin{proof} 
Let $f$ be a $\lambda$-eigenvector for $A_\ell$ and a $\lambda_0$-eigenvector for $T$. Choose a set of representatives $w_1, \ldots, w_s$ for the $T$ orbits in $V(\Gamma)$. For every $v \in V(\Gamma)$, we get $f(v) = \lambda_0^j w_i$, for some value of $i,j$. Write $w_i = [v]$, and $j = \mu(v)$  For each $w_k$, similarly to the previous proof, we get an equation: 
$$f(w_k) = \sum_{[e] = (w_k, v) } \lambda^{-\ell(e)} \lambda_0^{\mu(v)}f([v]) $$
Writing $\phi = f|_{w_1, \ldots, w_s}$, we get, in matrix notation $\phi = D \phi $, where $D = D(\lambda^{-1}, \ell)$ and $$D_{w_1, w_2} = \sum_{[e] = (w_1, v), [v]= w_2} \lambda^{-\ell(e)}\lambda_0^{\mu(v)}$$
By definition, whenever such a solution exists, we can extend it to a $\lambda$ eigenvector of $A_\ell$ that is also a $\lambda_0$ eigenvector for $T$. Thus, we can take $P_{T, \lambda_0} = \det(I - D)$
\end{proof}
\nid Once again, replacing $P_{T, \lambda_0}$ with the corresponding poly-exponential allows us to take non-integer values of $\ell$.

\subsubsection{Remark on the difference between $P_\ell$ and $\xi_\ell$}\label{Section2.3.1}
In general, $P(t, \ell) \neq \xi_\ell(t) $. One way to see this is to calculate degrees. Indeed, the degree of $P_\ell$ is at most  $$\delta(\ell) := \sum_{v \in V(\Gamma)} \max_{e \in E(\Gamma), [e] = (v,u)} \ell(e) $$

\nid whereas $|\ell|:= \deg(\xi_\ell) = \sum_{e \in E(\Gamma)} \ell(e)$. Thus, if $\Gamma$ has a pair of parallel edges with lengths $\geq 1$  then the polynomials will have different degrees. One reason that the degrees are different is that $0$ may be an eigenvalue of $A_\ell$, but it will never be detected by $P$. Using the same argument as the one in the proof of Proposition \ref{Proposition4}, we see that if $\phi$ is a $0$-eigenvector for $A_\ell$ then $\phi(v) = 0$ for every $v$ such that $h(v) \neq 1$. Thus, the geometric multiplicity of $0$ as an eigenvalue of $A_\ell$ is at most $\#E(\Gamma) - \#V(\Gamma)$. 

We can leverage the difference in degrees, together with Proposition \ref{Proposition4}  to the get the somewhat surprising result that for graphs with many parallel edges,  $A_\ell$ is quite often not diagonalizable and has roots with multiplicity greater than $1$. Indeed, notice that for any non-zero eigenvalue $\lambda$ of $A_\ell$, the geometric multiplicity of $\lambda$ is at most $\#V(\Gamma)$. Thus, we get the following.

\begin{corollary}\label{Corollary2} Let $\Gamma$, $\ell$ be as above. If $$\#V(\Gamma) \big(\delta(\ell) -1 \big) + \#E(\Gamma) <  |\ell| $$
then $A_\ell$ is not diagonizable.  Furthermore, if $$\delta (\ell) +  \#E(\Gamma) - \#V(\Gamma) < |\ell| $$
then $\xi_\ell$ has roots of multiplicity greater than $1$.
\end{corollary}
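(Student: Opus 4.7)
The plan is to derive both statements by elementary dimension counts that combine three pieces of information already assembled in this subsection: the bijection between nonzero eigenvalues of $A_\ell$ and nonzero roots of $P_\ell$ provided by Proposition \ref{Proposition4}, the degree bound $\deg P_\ell \leq \delta(\ell)$, and the kernel bound $\dim \ker A_\ell \leq \#E(\Gamma) - \#V(\Gamma)$ recorded just above the corollary.

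The first step is to upgrade Proposition \ref{Proposition4} to a bound on geometric multiplicities of nonzero eigenvalues. Its proof shows that for any $\lambda \neq 0$, a $\lambda$-eigenvector $f$ of $A_\ell$ is uniquely reconstructed from its restriction to $V(\Gamma) \subset V(\Gamma_\ell)$ via the formula $\lambda^{h(v)} f(v) = f(\tau(v))$ on subdivision vertices. The restriction map therefore embeds the $\lambda$-eigenspace into $\BC^{V(\Gamma)}$, so $m_g(\lambda) \leq \#V(\Gamma)$; combined with $\deg P_\ell \leq \delta(\ell)$ and the $\lambda \leftrightarrow \lambda^{-1}$ correspondence, it also follows that $A_\ell$ has at most $\delta(\ell)$ distinct nonzero eigenvalues.

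Both claims are now contradiction arguments. For the first, if $A_\ell$ were diagonalizable, then $|\ell| = \deg \xi_\ell$ would equal $\sum_\lambda m_g(\lambda)$, and combining the three bounds gives
\[
|\ell| \;\leq\; \bigl(\#E(\Gamma)-\#V(\Gamma)\bigr) + \delta(\ell)\cdot\#V(\Gamma) \;=\; \#V(\Gamma)\bigl(\delta(\ell)-1\bigr) + \#E(\Gamma),
\]
contradicting the hypothesis. For the second, if every root of $\xi_\ell$ were simple then $A_\ell$ would again be diagonalizable, but each distinct nonzero eigenvalue now contributes only $1$ to the degree; the zero eigenvalue still contributes at most $\#E(\Gamma)-\#V(\Gamma)$, so
\[
|\ell| \;\leq\; \delta(\ell) + \#E(\Gamma)-\#V(\Gamma),
\]
again contradicting the hypothesis.

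The step I expect to require the most care is justifying the kernel bound $\dim \ker A_\ell \leq \#E(\Gamma)-\#V(\Gamma)$ used in both counts. The idea sketched in Section \ref{Section2.3.1} is that a $0$-eigenvector vanishes on every subdivision vertex $v$ with $h(v) \neq 1$ and on every original vertex that is the terminal endpoint of a subdivided edge; one must then check that the $\#V(\Gamma)$ row equations coming from the original vertices impose $\#V(\Gamma)$ independent constraints on the resulting family of admissible vectors. Once this bookkeeping is in hand, the rest of the corollary is pure linear algebra.
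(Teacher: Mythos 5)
Your proposal is correct and follows essentially the same route the paper intends: the paper offers no separate argument for this corollary, deriving it by exactly your dimension count from the three facts stated just before it (the bound $\deg P_\ell \leq \delta(\ell)$ together with Proposition \ref{Proposition4}, the bound $\#E(\Gamma)-\#V(\Gamma)$ on the geometric multiplicity of $0$, and the bound $\#V(\Gamma)$ on the geometric multiplicity of any nonzero eigenvalue). Your closing caveat is also accurate: the kernel bound is asserted rather than proved in Section \ref{Section2.3.1}, so the bookkeeping you flag there is the one step that needs filling in under either presentation.
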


The characteristic polynomials $\xi_\ell$ are all represented by a poly-exponential as well. Define a \emph{multi-cycle} in a direct graph to be a disjoint collection of cycles. Let $\CC$ be the collection of all multi-cycles. If $\sigma$ is a multi-cycle, let $n(\sigma)$ denote the number of cycles in $\sigma$, and let $|\sigma|$ denote its length. In \cite{CR}, Cvetovi\'c and Rowlinson proved the following theorem. 

\begin{theorem} (\textup{Cvetovi\'c and Rowlinson }, \cite{CR}) In the notation above, 

$$\xi_\ell = t^{|\ell|}\big(1 + \sum_{\sigma \in \CC} {(-1)}^{n(\sigma)} t^{-|\sigma|}\big) $$
\end{theorem}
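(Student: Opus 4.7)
The plan is to apply the Leibniz formula to $\xi_\ell(t) = \det(tI - A_\ell)$ and reorganize the sum by the cycle structure of each permutation. Writing $N = \#V(\Gamma_\ell)$, each Leibniz term is $\mathrm{sgn}(\pi)\prod_{v}(tI - A_\ell)_{v,\pi(v)}$. A fixed point $v$ of $\pi$ contributes the factor $t - (A_\ell)_{v,v}$; expanding this linearly, I treat every fixed point as either contributing a factor $t$ (and ignoring the vertex) or contributing $-(A_\ell)_{v,v}$, i.e.\ a loop at $v$. After this expansion, each non-vanishing term is indexed by (a) a subset $U \subseteq V(\Gamma_\ell)$ producing a factor $t^{|U|}$, and (b) a vertex-disjoint union $L$ of directed cycles in $\Gamma_\ell$ covering $V(\Gamma_\ell)\setminus U$. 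A careful sign count — a $k$-cycle of $\pi$ contributes $(-1)^{k-1}$ to $\mathrm{sgn}(\pi)$, while the corresponding $k$-cycle of $A_\ell$-entries contributes $(-1)^{k}$ from the $-(A_\ell)$ factors, giving net $-1$ per cycle component — then yields the classical Harary--Sachs coefficient theorem
$$\xi_\ell(t) \;=\; \sum_{L}(-1)^{n(L)}\, t^{\,N-|L|},$$
where $L$ ranges over all vertex-disjoint unions of directed cycles of $\Gamma_\ell$ (including the empty collection, which produces $t^N$), $n(L)$ is its number of cycle components, and $|L|$ is its total length (equivalently, its total vertex count, since directed cycles have equal vertex and edge counts).

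The second step is to re-index the sum over multi-cycles of $\Gamma_\ell$ by multi-cycles of the un-subdivided graph $\Gamma$. Every internal subdivision vertex of $\Gamma_\ell$ has in-degree and out-degree equal to $1$, so any directed cycle of $\Gamma_\ell$ that enters such a vertex must traverse the entire subdivided edge in which that vertex lies. Contracting all subdivisions therefore defines a bijection between directed cycles of $\Gamma_\ell$ and directed cycles of $\Gamma$, and this bijection respects vertex-disjointness: two cycles of $\Gamma_\ell$ share a vertex iff either the corresponding $\Gamma$-cycles share a vertex of $\Gamma$ or they share an edge of $\Gamma$ (which would produce shared internal subdivision vertices). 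Under this correspondence, a multi-cycle $\sigma\in\CC$ with edge-sequence $e_1,\ldots,e_r$ lifts to a multi-cycle of $\Gamma_\ell$ of total length $\sum_i \ell(e_i) = |\sigma|$, and the number of components is preserved. Substituting into the previous identity and factoring out $t^{N} = t^{|\ell|}$ yields exactly
$$\xi_\ell \;=\; t^{|\ell|}\Bigl(1 + \sum_{\sigma\in\CC}(-1)^{n(\sigma)}\, t^{-|\sigma|}\Bigr).$$

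The main point of care is the sign/length bookkeeping in the Leibniz reorganization: one must reconcile the sign of each permutation with the $(-1)$'s coming from the off-diagonal $-A_\ell$ entries so that the net sign per cycle component is exactly $-1$, and one must verify that the subdivision/contraction bijection is genuinely a bijection on \emph{vertex-disjoint} collections of cycles (loops and parallel edges are handled without modification, since a loop is a $1$-cycle in $\Gamma$ whose subdivision is still a single directed cycle of length $\ell(e)$ in $\Gamma_\ell$). Neither step is deep; the result is simply the classical coefficient theorem for directed graphs applied to $\Gamma_\ell$ and rewritten in terms of the combinatorics of $\Gamma$.
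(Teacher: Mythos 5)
The paper offers no proof of this statement at all---it is quoted from Cvetkovi\'c--Rowlinson \cite{CR}---so your argument is supplying what the text only cites. What you give is the classical coefficient (Sachs-type) theorem for digraphs applied to $\Gamma_\ell$, followed by the contraction bijection back to multi-cycles of $\Gamma$, and both steps are sound: the per-cycle sign bookkeeping $(-1)^{k-1}\cdot(-1)^{k}=-1$ is correct (with parallel edges handled automatically because the entries of $A_\ell$ count edge multiplicities), and since every interior subdivision vertex has in- and out-degree $1$, any cycle of $\Gamma_\ell$ meeting it must traverse the whole subdivided edge, so contraction is a bijection on vertex-disjoint cycle collections that sends total length to $|\sigma|=\sum_i\ell(e_i)$. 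The one point you should not gloss over is the prefactor: your Leibniz expansion correctly produces $t^{N}$ with $N=\#V(\Gamma_\ell)=\#V(\Gamma)-\#E(\Gamma)+\sum_{e}\ell(e)$, so the identification $t^{N}=t^{|\ell|}$ is only valid under the reading $|\ell|=\deg\xi_\ell=\#V(\Gamma_\ell)$; the further equality $\deg\xi_\ell=\sum_e\ell(e)$, which you assert and which also appears in Section \ref{Section2.3.1} of the paper, fails unless $\#V(\Gamma)=\#E(\Gamma)$ (for a single vertex with two loops subdivided to length $2$ one gets $\xi_\ell=t^3-2t$, not $t^4-2t^2$, while the displayed formula with $t^{\#V(\Gamma_\ell)}$ in front is exactly right). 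So your proof establishes the theorem with $t^{\#V(\Gamma_\ell)}$ as the leading factor, which is the correct normalization; just note explicitly that this equals $t^{|\ell|}$ only under the convention $|\ell|:=\deg\xi_\ell$ rather than $|\ell|:=\sum_e\ell(e)$.
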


Since the number of multi-cycles is independent of $\ell$, and their lengths are linear functions of the $\ell_i$'s, we see that there is a poly-exponential as required. Both this poly-exponential and the Perron poly-exponential convey slightly different information - namely in determining multiplicities, and are  both useful. 

\section{Proofs}\label{Section3}

\subsection{Proof of Theorem \ref{Theorem1}}\label{Section3.1}

\begin{proof}

\nid We first set up some notation.  Let $\CX = Q^{-1}(0)$. Note that $\CX$ is an analytic set. Let $\pi_w: \BC \times \BR^m \to \BC$ and $\pi_\ell: \BC \times \BR^m \to \BR^m$ be the obvious projections. Given $A \subset \prm$, let $\CX_A = \CX \cap \pi_\ell^{-1}(A)$. Endow $\BR^m$ with the standard Euclidean metric, and $\BC\times \BR^m$ with the Euclidean metric we get by identifying it in the obvious way with $\BR^{m+2}$. Given $t = (w_0, \ell_0) \in \CX$, and $r > 0$, let $\beta(t,r)$ be the connected component of $$\{(w, \ell) \in \CX : \|\ell - \ell_0 \| < r\}$$ which contains $t$.

The following Proposition is the main technical tool in the proof of Theorem \ref{Theorem1}:

\begin{prop}\label{Prop3.1.1} Fix $A$ relatively compact in $\CS$. There exists constants $C_1, C_2, C_3$ such that for any $t = (\omega, l)\in \CX_A$ and any $r > 0$ with $\beta(t,r) \subset \CX_A$: 
$$\tup{diameter } \pi_w \beta (t, r) < C_1 (|\omega| + 1)^{C_2} e^{C_3 r} $$
\end{prop}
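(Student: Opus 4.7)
Here is my plan. The exponential $e^{C_3 r}$ is the signature of a Gronwall-type integration, while the polynomial $(|\omega|+1)^{C_2}$ suggests I should aim for a pointwise derivative estimate of the shape $|\partial w/\partial\ell_i|\lesssim(|w|+1)^{q}$ at generic points of $\CX$. The plan is therefore: derive such a bound on the smooth locus of $\CX$, handle the singular locus separately, and integrate along paths in $\beta(t,r)$.

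I would work with the complex hypersurface $\tilde\CX=\{Q=0\}\subset\BC\times\BC^m$ so as to invoke the implicit function theorem freely. At any \emph{smooth} point, meaning $\partial_wQ(w,\ell)\neq 0$, the local holomorphic parametrization $w=w(\ell)$ satisfies
\[
\frac{\partial w}{\partial\ell_i}=-\frac{a_iwe^{\ell_iw}}{\sum_j a_j\ell_je^{\ell_jw}}=-w\cdot\frac{a_ie^{\ell_iw}}{\partial_wQ(w,\ell)}.
\]
The first main step is to upgrade this identity to a uniform bound of the form $|\partial w/\partial\ell_i|\le K(A)(|w|+1)$ on (most of) the smooth part of $\CX_A$. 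The tools I would use are (i) the defining relation $\sum_j a_je^{\ell_jw}=-a_0$ on $\CX$, which constrains the magnitudes of the exponentials, and (ii) the strict inequalities $\ell_1,\ldots,\ell_{m-1}<\ell_m$ together with relative compactness of $A$ in $\CS$, which give quantitative control over the leading term $a_m\ell_me^{\ell_mw}$ in $\partial_wQ$.

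The second main step is to handle the singular locus $\Sigma=\{\partial_wQ=0\}\cap\CX$, which has real codimension two in $\CX$. Near each singular point, Weierstrass preparation supplies a local factorization $Q=u\cdot p$ with $u$ a unit and $p$ a distinguished polynomial in $w-w_0$ of degree equal to the multiplicity, whose roots as functions of $\ell$ are Puiseux branches confined to a common polydisk. A compactness argument on $\bar A$ then yields a finite cover of the $|w|$-bounded portion of $\Sigma\cap\CX_{\bar A}$ by such Weierstrass charts with uniformly bounded local $w$-diameter. Assembly is now standard: any point of $\beta(t,r)$ is connected to $t=(\omega,\ell_0)$ by a path in $\beta(t,r)$ decomposed as a finite concatenation of smooth arcs separated by passages through Weierstrass charts; Gronwall applied to the smooth-arc derivative bound gives the multiplicative factor $e^{C_3 r}$, while the finitely many Weierstrass contributions are absorbed into $(|\omega|+1)^{C_2}$.

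The main obstacle is the smooth-locus derivative bound. The ratio $|a_ie^{\ell_iw}/\partial_wQ|$ is not globally bounded---it can blow up as $(w,\ell)$ approaches $\Sigma$---so controlling it will likely require a Lojasiewicz-type lower bound of the shape $|\partial_wQ|\ge c\,d((w,\ell),\Sigma)^\alpha$, and the Gronwall integration then has to be set up to absorb the resulting integrable singularities. Meshing this smooth-arc estimate with the Weierstrass analysis at $\Sigma$, with no double-counting or accumulation of large $w$-jumps across transitions, is the delicate technical step.
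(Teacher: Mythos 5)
Your plan hinges on two things that are not established and that are, in fact, the whole difficulty. First, the smooth-locus bound $|\partial w/\partial\ell_i|\le K(A)(|w|+1)$ is simply false near the locus $\Sigma=\{\partial_wQ=0\}\cap\CX$, as you note, and the proposed repair (a Lojasiewicz-type lower bound $|\partial_wQ|\ge c\,d(\cdot,\Sigma)^\alpha$ with constants uniform over $\CX_A$, plus an "integrable singularity" Gronwall) is exactly the hard analytic content, left unproved. The standard sources of such inequalities are compactness arguments, but $\CX_A$ is \emph{not} relatively compact: Lemma-type bounds only control $\mathfrak{Re}(w)$ over $A$, while $\mathfrak{Im}(w)$ is unbounded, and the singular locus and the constants $c,\alpha$ have no a priori uniformity as $\mathfrak{Im}(w)\to\infty$. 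The same non-compactness undercuts your Weierstrass step: covering only the "$|w|$-bounded portion" of $\Sigma\cap\CX_{\bar A}$ by finitely many charts says nothing about the points of $\beta(t,r)$ with large imaginary part, which are precisely the ones responsible for the $(|\omega|+1)^{C_2}$ factor in the statement. Finally, even granting uniform charts, a path in $\beta(t,r)$ may pass near $\Sigma$ many times; you give no bound on the number of "Weierstrass passages" in terms of $r$ and $|\omega|$, so the claim that their contributions are "absorbed into $(|\omega|+1)^{C_2}$" is not justified.

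For comparison, the paper's proof avoids any smooth/singular dichotomy and any differential estimate. It shows (Proposition \ref{Prop3.1.7}) that for every $t=(\omega,l)\in\CX_A$ the recentered function $Q_t$ lies in a fixed class $\spc$ of entire functions with uniform upper bounds on all Taylor coefficients at $0$ and a uniform lower bound on at least one of the first $m$ derivatives; the lower bound comes from the Vandermonde structure of the derivative vector together with the constraint $\sum_j a_je^{l_j\omega}=-a_0$ and the real-part bound of Lemma \ref{Lemma3.1.5}, and this is how uniformity in the unbounded $\mathfrak{Im}(\omega)$ direction is obtained. Compactness of $\spc$ then yields, via Rouch\'e-type zero counting (Lemmas \ref{Lemma3.1.10}--\ref{Lemma3.1.11}), a uniform $D$ such that a root moves at most $D$ when $\ell$ moves by roughly $N/(|\omega|+1)$ (Lemma \ref{Lemma3.1.12}), and iterating with harmonically shrinking steps produces the bound $C_1(|\omega|+1)^{C_2}e^{C_3r}$. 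If you want to salvage your route, the minimum you must supply is a quantitative lower bound on $|\partial_wQ|$ (or on some derivative) that is uniform over all of $\CX_A$, including large $\mathfrak{Im}(w)$, together with a count of singular passages along paths in $\beta(t,r)$; as written, the proposal has a genuine gap at its central step.
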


Since the proof of Proposition \ref{Prop3.1.1} is somewhat involved, we present it separately in  \ref{Sec3.1.1}.

\begin{prop}\label{Prop3.1.2} Let $\CY$ be a connected component of an irreducible component of $\CX$. Then $\pi_\ell \CY \cap \CS = \CS$
\end{prop}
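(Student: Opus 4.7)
The approach is a path-lifting argument via Zorn's lemma, using the local analytic structure of $\CX$ to supply openness of the lifting and Proposition \ref{Prop3.1.1} to supply closedness. We may assume $\CY$ meets $\pi_\ell^{-1}(\CS)$; fix $(w_0, \ell_0) \in \CY$ with $\ell_0 \in \CS$ and an arbitrary $\ell^* \in \CS$. Since $\CS$ is convex, the segment $\gamma(t) = (1-t)\ell_0 + t\ell^*$ lies in $\CS$; choose a relatively compact open $A \subset \CS$ and $r > 0$ with $\gamma([0,1]) \subset B(\ell_0, r) \subset \overline{B(\ell_0, r)} \subset A$. The set of continuous lifts $\sigma \colon D \to \CY$ on subintervals $\{0\} \subseteq D \subseteq [0,1]$ with $\sigma(0) = (w_0, \ell_0)$ and $\pi_\ell \circ \sigma = \gamma|_D$, ordered by extension, has an upper bound for every chain (take the union of domains), so Zorn's lemma yields a maximal lift $\sigma_* \colon D_* \to \CY$. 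The plan is to show $D_* = [0, 1]$; the endpoint $\sigma_*(1) \in \CY$ then projects to $\ell^*$, giving the conclusion.

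For \emph{openness} (extending a lift past a defined endpoint $(w, \ell) = \sigma_*(t)$ with $t < 1$): if $\partial_w Q(w, \ell) \neq 0$, the implicit function theorem produces a smooth local section $\ell' \mapsto (\phi(\ell'), \ell')$ of $\pi_\ell$ through $(w, \ell)$, and since $\CX$ is smooth and locally irreducible at this regular point, the section lies in the same connected component $\CY$. If $\partial_w Q(w, \ell) = 0$, then $w$ is a zero of $Q(\cdot, \ell)$ of some finite order $k \geq 2$; by the argument principle, for $\ell'$ near $\ell$ the function $Q(\cdot, \ell')$ has $k$ roots near $w$, depending Puiseux-continuously on $\ell'$, and the local analytic branch of $\CY$ along which $\sigma_*$ reaches $(w, \ell)$ admits a continuous extension past $(w, \ell)$.

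For \emph{closedness} (extending $\sigma_*$ to $\sup D_*$ when $\sup D_* \notin D_*$): as $\sigma_*$ is a continuous path in $\CX$ starting at $(w_0, \ell_0)$ with $\ell$-coordinates in $B(\ell_0, r)$, its image lies in the connected component $\beta((w_0, \ell_0), r) \subseteq \CX_A$. Proposition \ref{Prop3.1.1} bounds $|\pi_w \sigma_*(t)|$ uniformly by $M := |w_0| + C_1(|w_0|+1)^{C_2} e^{C_3 r}$; since $\CY$ is closed in $\BC \times \BR^m$ (a connected component of an irreducible component of the closed analytic set $\CX$), $\sigma_*(D_*)$ is contained in a compact subset of $\CY$. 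For each $\ell$, the fiber $\CY \cap \pi_\ell^{-1}(\ell) \cap \{|w| \leq M\}$ is finite (isolated zeros of an entire function in a bounded region), and Rouch\'e's theorem ensures that these roots depend continuously on $\ell$; continuity of $\sigma_*$ then forces it to track a single such root and to converge to a unique limit $(w^*, \gamma(\sup D_*)) \in \CY$ as $t \to \sup D_*$. This extends $\sigma_*$ to $D_* \cup \{\sup D_*\}$, contradicting maximality; combined with the openness step (which contradicts maximality whenever $\sup D_* \in D_*$ and $\sup D_* < 1$), this forces $D_* = [0, 1]$.

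The main obstacle is the openness step at a singular point of $\CX$: one must verify that among the $k$ locally continuous branches of roots, the one continuing the incoming trajectory of $\sigma_*$ remains in the specific connected component $\CY$ rather than crossing into a different connected component of the same irreducible component. I would address this by noting that $\sigma_*$ approaches the singular point along a single well-defined local analytic branch of $\CY$ (by connectedness of its image near the endpoint), and that this same branch admits a continuous extension past the singular point via the local complex analytic structure of $Q^{-1}(0)$ (using a Puiseux parameterization if necessary).
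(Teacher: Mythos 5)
Your closedness step is sound and is in fact the same mechanism the paper uses: Proposition \ref{Prop3.1.1} bounds the $w$-coordinate over a relatively compact piece of $\CS$, and closedness of $\CY$ in $\BC\times\BR^m$ supplies the limit point (to get an actual limit as $t\to\sup D_*$, rather than "tracking a single root," argue that the limit set of the bounded path is a connected subset of the finite fiber $\{w: Q(w,\gamma(\sup D_*))=0,\ |w|\le M\}$, hence a single point). The genuine gap is the openness step at points where $\partial_w Q(w,\ell)=0$, i.e.\ exactly the step you flag as the main obstacle and then do not resolve. The difficulty is not the existence of nearby roots (Rouch\'e gives that), nor even a continuous selection of a root along the one-parameter path (Weierstrass preparation reduces this to a monic polynomial whose coefficients are continuous in $t$, where continuous selection is classical); it is that the continuation must stay in $\CY$, the given connected component of the given \emph{irreducible} component of $\CX$. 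At such a point several local branches meet, possibly belonging to different irreducible components, and a continuous selection of roots can perfectly well cross onto another component. Invoking a "Puiseux parameterization" does not settle this: Puiseux applies to curves, i.e.\ one-dimensional bases, whereas here the base is $m$-dimensional. One can repair this by restricting $Q$ to the complexified segment, obtaining a curve in $\BC\times\BC$, applying Puiseux there, and then proving (i) that the branch carrying the incoming lift is contained in $\CY$ (an irreducible curve germ with infinitely many points in $\CY$ accumulating at its center lies in the corresponding component), and (ii) that this branch has points over real parameters on the outgoing side of the singular time which can be chosen continuously. None of this is in your proposal, and it is precisely where the content of the openness step lies.

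It is worth contrasting this with the paper's route, which avoids branch-tracking altogether: it shows that $\pi_\ell\CY\cap\CS$ is open (the projection of the complexified component is an open map, because $a_0\neq 0$ forces $Q(\cdot,\ell)\not\equiv 0$, so the fibers over $\ell$ are discrete and the pure-dimensional analytic set projects openly) and relatively closed in $\CS$ (Proposition \ref{Prop3.1.1} plus closedness of $\CY$, your closedness step), and then concludes by connectedness of $\CS$. Working with the image of the whole component rather than with one lifted branch means one never has to decide \emph{which} root over a nearby $\ell$ belongs to $\CY$, only that some root does. To complete your argument, either carry out the curve-restriction and component-membership argument sketched above at the branch points, or replace the path-lifting by the open-plus-closed argument for the image itself.
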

\begin{proof} We write: $\CY = \CY_\BC \cap \BC \times \BR^m $ where $\CY_\BC$ is a collection of connected components of irreducible components of $Q^{-1}(0)$ in $\BC \times \BC^m$. The projection map $\pi_\ell$ is analytic, and hence open. The set $\CY_\BC$ is an analytic set. Hence the image of $\CY_\BC$ under this map is open in $\BC^m$, and thus the image of $\CY$ of $\pi_\ell$ is open in $\BR^m$. Now, we show that $\pi_\ell \CY$ is relatively closed in $\CS$. Indeed, let $U \subset \CS \cap \pi_\ell \CY$  be relatively pre-compact in $\CS$. By perhaps restricting to a subset, we can assume that $\pi_\ell^{-1}(U) \cap \CY$ is connected. By Proposition \ref{Prop3.1.1}, $\pi_\ell^{-1}(U) \cap \CY$ is bounded. Thus, given $l \in \overline{U}$, $\exists \{t_i\}_i = \{(\omega_i, l_i) \}_i \subset \CY$ with $l_i \in U$, $l_i \to l$, and $\{w_i \}_i$ bounded. By passing to a subsequence, we can assume $w_i \to w$. By continuity  $(w, l) \in \CY$, and thus $l \in \CS \cup \pi_\ell \CY$. The result follows from the observation that $\CS$ is connected. 

\end{proof}

\begin{corollary}\label{Cor3.1.3} Let $\ell \in \CS$ and let $w \in \CZ(\ell)$, then there exists a continuous root $\psi: \CS \to \BC$ such that $\psi(\ell) = w$.
\end{corollary}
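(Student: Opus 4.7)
The plan is to use Proposition \ref{Prop3.1.2} to produce a connected piece of $\CX$ that surjects onto $\CS$ and contains the marked point $(w,\ell)$, and then build $\psi$ as a continuous section of the projection. Let $\CY$ be a connected component of an irreducible component of $\CX$ containing $(w,\ell)$. Proposition \ref{Prop3.1.2} gives $\pi_\ell(\CY) = \CS$, so for every $\ell' \in \CS$ the fiber $\pi_\ell^{-1}(\ell') \cap \CY$ is nonempty; by Proposition \ref{Prop3.1.1} these fibers are bounded, and since they sit inside the discrete sets $\CZ(\ell')$ they must be finite. So $\pi_\ell|_\CY : \CY \to \CS$ is a proper, surjective, finite-to-one map from a connected real-analytic space onto the convex open set $\CS$.

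The second step is to construct a continuous section $s: \CS \to \CY$ of $\pi_\ell|_\CY$ with $s(\ell) = (w,\ell)$; then $\psi := \pi_w \circ s$ is the desired continuous root. I would build $s$ by path-lifting: for $\ell' \in \CS$, choose a continuous path $\gamma: [0,1] \to \CS$ from $\ell$ to $\ell'$, and lift it to $\tilde\gamma: [0,1] \to \CY$ with $\tilde\gamma(0) = (w,\ell)$, then set $s(\ell') := \tilde\gamma(1)$. Local existence of lifts is straightforward: on the open subset of $\CY$ where $\partial_w Q \neq 0$ the implicit function theorem provides smooth local inverses of $\pi_\ell$, and across the ramification locus the continuous dependence of zeros of a holomorphic family on its parameters lets us continue lifts through multiple-root points. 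Properness coming from Proposition \ref{Prop3.1.1} prevents the lift from running off to infinity along $\gamma$.

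The delicate step, and the main obstacle, is proving that $s$ is well-defined and continuous. For well-definedness I would use that $\CS$ is convex and therefore simply connected, together with the fact that $\CY$ is a single connected component of an irreducible complex analytic variety: any two paths in $\CS$ from $\ell$ to $\ell'$ are homotopic, and one must argue that their lifts can be connected by a homotopy of lifts inside $\CY$, forcing them to terminate at the same point of the finite fiber $\pi_\ell^{-1}(\ell') \cap \CY$. Continuity of $s$ at a point $\ell'$ is then local: away from the ramification locus it follows from the implicit function theorem, and at ramification points it follows from the continuous dependence of the zero multiset of $Q_{\ell'}$ on $\ell'$, together with the choice of $\CY$ as a specific connected component, which is what prevents continuous sheet-swapping and selects a coherent branch through $(w,\ell)$.
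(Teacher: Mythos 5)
Your route --- take the component $\CY$ of $\CX$ through $(w,\ell)$, use Proposition \ref{Prop3.1.2} for surjectivity and Proposition \ref{Prop3.1.1} for boundedness/finiteness of fibers, then produce $\psi$ as a continuous section by path lifting --- is the natural one, and it is evidently what the paper intends (the paper states the corollary immediately after Proposition \ref{Prop3.1.2} with no further argument). But the step you yourself call ``the delicate step'' is a genuine gap, not a verification you have merely deferred. The claim that, because $\CS$ is convex (hence simply connected) and $\CY$ is a connected component of an irreducible component, lifts of homotopic paths must terminate at the same point of the fiber, is false for finite branched coverings in general: $\pi_\ell|_{\CY}$ is a covering map only off the branch locus (the $\ell'$ where zeros of $Q_{\ell'}$ collide), and the homotopy lifting property fails across that locus. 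The model counterexample is $w^2=t$ over a disc $D\subset\BC\cong\BR^2$: the zero variety is irreducible, connected and smooth, the projection to $D$ is proper, onto and two-to-one, and $D$ is convex, yet there is no continuous $\psi:D\to\BC$ with $\psi(t)^2=t$, since restricting such a $\psi$ to a circle about $0$ would give a continuous branch of $\sqrt{t}$ on that circle. So convexity of $\CS$ together with irreducibility of $\CY$ cannot by themselves ``prevent continuous sheet-swapping''; note also that your local continuation through a multiple-root point is not unique, which is exactly what feeds the ambiguity.

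What is needed, and what neither your sketch nor the paper supplies, is a reason why the local monodromy of $\pi_\ell|_{\CY}$ around its branch locus in $\CS$ is trivial --- or some substitute mechanism special to this setting (for instance, exploiting that the parameters $\ell$ are real and the coefficients $a_i$ are real, analyzing the image of the discriminant of the local Weierstrass polynomial as a map from a neighborhood in $\BR^m$ to $\BC$, or constructing the selection along segments from $\ell$ and proving independence of the segment directly). Along a single path the lift does exist and is continuous (zeros of a holomorphic function depend continuously on one real parameter, and Proposition \ref{Prop3.1.1} keeps the lift bounded), so the one-dimensional case of your argument is fine; for $m\ge 2$, path-independence is precisely the content of the corollary beyond Proposition \ref{Prop3.1.2}, and it remains unproved in your proposal.
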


\begin{lemma}\label{Lemma3.1.4} There are at most countably many roots $\rho: \CS \to \BC$. 
\end{lemma}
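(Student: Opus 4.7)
The plan is to decompose the zero set $\CX$ into countably many connected pieces and then argue that each piece contributes only countably many continuous roots. Viewing $Q$ as an entire function on $\BC \times \BC^m$, its zero set $\CX_\BC$ is a complex-analytic hypersurface. Standard local finiteness of irreducible components together with the $\sigma$-compactness of the ambient space imply that $\CX_\BC$ has at most countably many irreducible components; intersecting each with $\BC \times \CS$ and decomposing into connected components yields a countable family $\{\CK_\alpha\}$ whose union is $\CX$. Since $\CS$ is connected, any continuous root $\psi: \CS \to \BC$ has a connected graph $G_\psi$, which must therefore lie entirely in a single $\CK_\alpha$. The problem thus reduces to bounding the number of continuous roots whose graph lies in a single component $\CK_\alpha$.

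For a fixed $\CK_\alpha$, I would study the branch set $B_\alpha = \pi_\ell(\{(w,\ell) \in \CK_\alpha : \partial_w Q(w,\ell) = 0\})$, which is a proper subvariety of $\CS$. Over $\CS \setminus B_\alpha$, the implicit function theorem makes $\pi_\ell: \CK_\alpha \cap \pi_\ell^{-1}(\CS \setminus B_\alpha) \to \CS \setminus B_\alpha$ a local diffeomorphism, and Proposition \ref{Prop3.1.1} provides the properness needed to upgrade this to a covering with countable fiber. Continuous sections of this covering correspond, after picking a basepoint, to fixed points of the monodromy action of $\pi_1(\CS \setminus B_\alpha)$ on the countable fiber; since $\CS \setminus B_\alpha$ has countably generated fundamental group (subanalytic complements in a convex cone are stratifiable), this fixed-point set is at most countable. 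Each continuous root on $\CS$ restricts to such a section over $\CS \setminus B_\alpha$, and by density and continuity this restriction determines $\psi$ at the remaining points of $B_\alpha$. Summing over the countable family $\{\CK_\alpha\}$ gives at most countably many continuous roots in total.

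The main obstacle will be controlling the monodromy representation $\pi_1(\CS \setminus B_\alpha) \to \mathrm{Sym}(\text{fiber})$ well enough to guarantee that its global fixed-point set is itself countable. A simpler alternative, which I would pursue first, is to prove directly that a continuous root with graph in $\CK_\alpha$ is determined by its germ at a single point $\ell_0 \in \CS$; this reduces the count to the finitely many Weierstrass branches of $\CX$ at each of the countably many points of $\CZ(\ell_0)$. The key step in this alternative route is showing that the agreement set of two germ-equivalent roots is closed in $\CS$, which in turn reduces to verifying that at a branch point $\ell^*\in\CS$ the analytic continuation of a shared germ is forced to coincide on the far side by the connectedness of $\CK_\alpha$ and the irreducibility of the underlying complex component $\CY_{\BC,j}$. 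Simple toy examples (e.g.\ $w^2=\ell^2$ on $\BR$) show why the constraint $G_\psi\subset\CK_\alpha$ is essential: it is precisely this constraint, together with the structure of $\CS$ as a convex (hence simply connected) cone, that prevents continuous roots from proliferating uncountably via branch switching.
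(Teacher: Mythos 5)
There is a genuine gap, and it sits exactly where your proposal is least specific. First, a structural slip: your $\CK_\alpha$ are connected components of intersections of \emph{irreducible} components with $\BC\times\CS$, and these sets can overlap; a connected graph contained in their union need not lie in any single one, since a continuous root can pass from one irreducible component to another at a crossing point (this is precisely the branch switching of your own toy example $w^2=\ell^2$). That much is repairable by taking $\CK_\alpha$ to be the connected components of $\CX$ itself. The serious problem is the count over $\CS\setminus B_\alpha$. Whether you phrase it as monodromy fixed points or as ``a section is determined by its value at one point,'' the argument needs $\CS\setminus B_\alpha$ to be connected, and nothing in the proposal addresses this (nor is $B_\alpha$ a subvariety --- projections of analytic sets are only subanalytic --- nor is it shown to be proper). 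In fact the relevant branch locus typically contains real codimension-\emph{one} walls: a real double root, i.e.\ a complex-conjugate pair colliding on the real axis, is cut out by the two real equations $Q=\partial_w Q=0$ in the $m+1$ real unknowns $(w,\ell)\in\BR\times\BR^m$, so it sweeps out hypersurfaces in $\CS$. With $\CS\setminus B_\alpha$ disconnected into countably many components, sections of a countable-fiber local homeomorphism can form an uncountable set (an independent countable choice on each component), so injecting global continuous roots into that set of sections no longer yields countability. Incidentally, the obstacle you flag as the main one --- countability of the monodromy fixed-point set --- is vacuous: that set lives inside a single countable fiber, so it is automatically countable; what is missing is connectivity of the base, not countable generation of $\pi_1$.

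Your fallback route fails at exactly these walls. On one side of a wall where a conjugate pair collides there are two conjugate branches, on the other side two real branches; gluing gives four local continuous roots, and two of them agree identically on one side while following different real branches on the other. Hence the agreement set of two continuous roots is open but in general not closed, a continuous root is \emph{not} determined by its germ at a point, and the ``key step'' you propose to verify is false as stated; irreducibility of the underlying complex component does not force the real continuations to coincide. The countability in the lemma comes from a more local mechanism, which is how the paper argues: over any relatively compact $A\subset\CS$ and bounded region $K\subset\BC$ there are only finitely many continuous roots $A\to K$, because $\CZ(\ell)\cap K$ is finite and $\CX\cap(K\times A)$ is a finite collection of manifolds (with Proposition \ref{Prop3.1.1} supplying the needed boundedness), and a countable exhaustion of $\CS$ and $\BC$ then finishes. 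To salvage your approach you would have to control the gluing of roots across the codimension-one walls --- which is essentially this local finiteness statement --- or else prove that the offending part of $B_\alpha$ has codimension at least two; neither is done in the proposal.
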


\begin{proof} For any $A \subset \CS$, pre-compact and open, and any region $K \subset \BC$ , there are only finitely many possible roots $\psi: A \to K$. This follows from the fact that for any $\ell \in \CS$, $\CZ(\ell) \cap K$ is finite, and that $\CX \cap K \times A$ is a finite collection of manifolds. Taking countable exhaustions of $\CS$, $\BC$  gives the result. 
\end{proof}

Theorem \ref{Theorem1} follows directly from Corollary \ref{Cor3.1.3} and Lemma \ref{Lemma3.1.4}. 
\end{proof}
\subsection{Proof of Proposition \ref{Prop3.1.1}}\label{3.1.1}
\begin{proof}
\begin{lemma}\label{Lemma3.1.5} Let $A \subset \CS$ be relatively compact in the topology induced from $\BR^m$. There exists $C = C(A) >0$, such that for any $\ell \in A$, and $\alpha \in \CZ(\ell)$: $-C  < \mathfrak{Re}(\alpha) < C$.
\end{lemma}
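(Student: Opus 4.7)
The plan is a standard dominance argument, exploiting the assumption $\ell_m > \ell_i$ for $i < m$ (which defines $\CS$) together with the compactness of $\overline{A} \subset \CS$ to isolate the leading term as $|\mathfrak{Re}(\alpha)|$ becomes large.

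First, I would use that $\CS$ is open and $\overline{A}$ is a compact subset of $\CS$. Continuous functions on the compact set $\overline{A}$ then yield constants $0 < c_1 \le c_2$ and $\delta > 0$ such that for every $\ell \in A$ and every $i \in \{1,\dots,m\}$:
\begin{equation*}
c_1 \le \ell_i \le c_2, \qquad \ell_m - \ell_i \ge \delta \text{ for } i < m.
\end{equation*}
I will also assume (as is implicit in the ``$m$ terms'' convention) that $a_m \ne 0$; otherwise one passes to the largest $i$ with $a_i \ne 0$, whose exponent remains strictly maximal on $\CS$.

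For the upper bound on $\mathfrak{Re}(\alpha)$, suppose $Q_\ell(\alpha) = 0$ with $\mathfrak{Re}(\alpha) = x$. Isolating the dominant term gives
\begin{equation*}
|a_m|\, e^{\ell_m x} = \bigl| a_m e^{\ell_m \alpha} \bigr| = \Bigl| a_0 + \sum_{i=1}^{m-1} a_i e^{\ell_i \alpha} \Bigr| \le |a_0| + \sum_{i=1}^{m-1} |a_i|\, e^{\ell_i x}.
\end{equation*}
Dividing by $|a_m|\,e^{\ell_m x}$ and using $\ell_m - \ell_i \ge \delta$ for $i < m$, together with $\ell_m \ge c_1$, yields
\begin{equation*}
1 \le \frac{|a_0|}{|a_m|}\, e^{-c_1 x} + \sum_{i=1}^{m-1} \frac{|a_i|}{|a_m|}\, e^{-\delta x}
\end{equation*}
whenever $x \ge 0$. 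The right-hand side tends to $0$ as $x \to \infty$, so there is a constant $C^+$, depending only on $\delta, c_1$, and the coefficients $a_i$ (hence only on $A$), above which this inequality fails. Therefore $\mathfrak{Re}(\alpha) \le C^+$.

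For the lower bound, use $a_0 \ne 0$ directly: if $\mathfrak{Re}(\alpha) = x < 0$, then
\begin{equation*}
|a_0| = \Bigl| \sum_{i=1}^m a_i e^{\ell_i \alpha} \Bigr| \le \sum_{i=1}^m |a_i|\, e^{\ell_i x} \le \Bigl( \sum_{i=1}^m |a_i| \Bigr) e^{c_1 x}.
\end{equation*}
This forces $x \ge C^-$ for some constant $C^-$ depending only on $c_1, |a_0|$, and $\sum |a_i|$. Taking $C = \max(C^+, -C^-)$ completes the proof. There is no genuine obstacle here; the only point requiring care is extracting the uniform gap $\ell_m - \ell_i \ge \delta > 0$ from the relative compactness of $A$ in the open set $\CS$, which is what makes the dominance of the $\ell_m$-term uniform across $A$.
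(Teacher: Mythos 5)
Your argument is correct and is essentially the paper's own proof: the same two-sided dominance argument, with the $a_m e^{\ell_m w}$ term dominating for large positive real part (using the uniform gap $\ell_m - \ell_i \geq \delta$ from compactness of $\overline{A}$ in $\CS$) and the constant term $a_0$ dominating for very negative real part. Your version is in fact slightly more careful than the paper's, in making explicit both the extraction of the uniform constants $c_1, \delta$ and the assumption $a_m \neq 0$.
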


\begin{proof}
Fix  $\ell \in \CS$ and assume, without loss of generality that $0< \ell_1 \leq  \ell_2 \leq \ldots \leq \ell_{m-1} < \ell_m$. Let $M = \max_i (|a_i|)$. For $x \in \BC$ whose real part is sufficiently large, $|a_me^{(\ell_m - \ell_{m-1}) x}| > (m-1)M$, and thus $|a_m e^{\ell_m x}| > |a_0 + \sum_{i=1}^{m-1}a_i e^{\ell_i x}|$. Thus $Q_\ell(x) \neq 0$.  

Similarly, if $\mathfrak{Re}(x)$ is sufficiently small, then for every $i$:  $$|e^{\ell_i x}| < |e^{\ell_1w}| < \frac{|a_0|}{M}$$ and thus $Q_\ell(x) \neq 0$. Since the above bounds only depend on $a_0, \ldots, a_m, \ell_1$ and $\ell_m - \ell_{m-1}$, we are done.  

\end{proof}

 \begin{definition} Given any $t = (w_0, \ell_0) \in  \BC \times \CS$, let $Q_t: \BC \to \BC$ be the function $Q_t(z) = Q(z + w_0, \ell_0)$.
 \end{definition}

\begin{definition} Fix $\omega \in \BC$. Let $\CI_\omega:  \BR^m \to \tup{Hol}(\BC)$ be the function sending $\ell$ to $Q_{(\omega, \ell)}$.
\end{definition}

\begin{definition} Given an entire function $f$, and $j \in \BN$, let $$\zeta_j(f) = \sup\{r| N_r(f) <j \}$$
where $N_r(f)$ is the number of zeroes of $f$ in $B(0,r)$, counted with multiplicity, and we use the convention that $\sup(\emptyset) = 0$. 
\end{definition}

\begin{definition} A holomorphic function is called $j$-separated if $\zeta_j(f) < \zeta_{j+1} (f)$. 
\end{definition}

\begin{observation}\label{obs3.1.6}
Let $t = (\omega, l) \in B \subset \CX$, and suppose that $B$ is connected. Suppose that there exists a $k$ such that for every $(w, \ell) \in B$, $\CI_\omega(\ell)$ is $k$-separated. Then $$\tup{diameter}(\pi_w B) \leq \sup_{\ell' \in \pi_\ell B} \zeta_k \big(\CI_\omega(\ell') \big)$$ 
\end{observation}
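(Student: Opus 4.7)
The plan is to track the zero $w - \omega$ of $\CI_\omega(\ell)$ as $(w,\ell)$ varies in $B$, and to show via a connectedness argument that it is confined to the closed disc of radius $\zeta_k(\CI_\omega(\ell))$ about the origin. First I would observe that $(w, \ell) \in B \subset \CX$ means $Q(w, \ell) = 0$, which, by definition of $\CI_\omega$, is equivalent to $\CI_\omega(\ell)(w - \omega) = 0$. Thus the map $\phi : B \to \BC$ given by $\phi(w, \ell) = w - \omega$ is continuous and, for each $\ell$, takes values in the zero set of $\CI_\omega(\ell)$. Since $t = (\omega, l) \in B$, we start at $\phi(t) = 0$, which is already a zero of $\CI_\omega(l)$ (so $\zeta_1(\CI_\omega(l)) = 0$).

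Next I would exploit the $k$-separated hypothesis. The condition $\zeta_k(\CI_\omega(\ell)) < \zeta_{k+1}(\CI_\omega(\ell))$ says that the open annulus $\{z \in \BC : \zeta_k(\CI_\omega(\ell)) < |z| < \zeta_{k+1}(\CI_\omega(\ell))\}$ contains no zeros of $\CI_\omega(\ell)$. Consequently, for every $(w, \ell) \in B$, exactly one of the alternatives
\[
|\phi(w, \ell)| \leq \zeta_k(\CI_\omega(\ell)) \qquad\text{or}\qquad |\phi(w, \ell)| \geq \zeta_{k+1}(\CI_\omega(\ell))
\]
must hold. Call the corresponding subsets of $B$ by $B_{\tup{in}}$ and $B_{\tup{out}}$; they partition $B$, and the goal is to show $B = B_{\tup{in}}$.

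The main step is to verify that both $B_{\tup{in}}$ and $B_{\tup{out}}$ are closed in $B$; connectedness of $B$, together with $t \in B_{\tup{in}}$, will then force $B_{\tup{out}} = \emptyset$. Continuity of $\phi$ is immediate, so the task reduces to showing that $\ell \mapsto \zeta_k(\CI_\omega(\ell))$ and $\ell \mapsto \zeta_{k+1}(\CI_\omega(\ell))$ are continuous on $\pi_\ell B$. This I would establish with Hurwitz's theorem, or equivalently by the argument principle applied to circular contours lying in the zero-free annulus associated to nearby parameters: the number of zeros in a disc of slightly varying radius is locally constant, so the $j$-th separation radius varies continuously on the $k$-separated locus.

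The principal obstacle I anticipate is exactly this continuity-of-$\zeta_j$ step, together with the bookkeeping around multiplicities (the $k$-separated hypothesis forbids a gap only between the $k$-th and $(k+1)$-th zero, not within the first $k$, so I must argue continuity without assuming simplicity of the inner zeros). Once this is handled, the conclusion is automatic: every $(w, \ell) \in B$ satisfies $|w - \omega| \leq \zeta_k(\CI_\omega(\ell)) \leq \sup_{\ell' \in \pi_\ell B}\zeta_k(\CI_\omega(\ell'))$, so $\pi_w B$ lies in a closed disc about $\omega$ of the stated radius, and the bound on $\tup{diameter}(\pi_w B)$ follows (up to a harmless factor of two that is absorbed into the constants of Proposition \ref{Prop3.1.1}).
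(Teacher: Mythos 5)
Your argument is correct, and it supplies exactly the reasoning the paper leaves implicit: Observation \ref{obs3.1.6} is stated without proof, and the intended justification is precisely your connectedness dichotomy using the zero-free open annulus $\{\zeta_k < |z| < \zeta_{k+1}\}$ that $k$-separation provides. The continuity step you flag as the main obstacle does go through: for $f \not\equiv 0$, each $\zeta_j$ is continuous as a $[0,\infty]$-valued function under locally uniform convergence --- both semicontinuities follow by applying Rouch\'e/Hurwitz on circles $|z|=r'$ chosen to avoid the (discrete) set of moduli of zeros of the limit function, and multiplicities take care of themselves because $N_r$ counts with multiplicity; since $\ell \mapsto \CI_\omega(\ell)$ is continuous into $\tup{Hol}(\BC)$ and $\CI_\omega(\ell) \not\equiv 0$ (as $a_0 \neq 0$ and constants together with the exponentials $e^{\ell_i z}$, $\ell_i>0$, are linearly independent), your sets $B_{\tup{in}}$ and $B_{\tup{out}}$ are indeed closed in $B$, and $t \in B_{\tup{in}}$ forces $B = B_{\tup{in}}$. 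You are also right about the factor of two: the argument places $\pi_w B$ in the closed disc of radius $\sup_{\ell' \in \pi_\ell B} \zeta_k\big(\CI_\omega(\ell')\big)$ about $\omega$, so the honest diameter bound is twice the quantity literally stated in the Observation; this is immaterial, since in the proof of Proposition \ref{Prop3.1.1} the bound is only used to produce a uniform constant $D$, and replacing $D$ by $2D$ merely changes the unspecified constants $C_1, C_2, C_3$.
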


In order to study the above expressions, we first prove that the elements of $\{\CI_\omega(l)\}_{l \in \pi_\ell \beta(t,r)}$ belong to a well behaved class of functions. 

 \begin{definition} Let $E, R> 0$, and let $\spc$ be the set of all entire functions $f: \BC \to \BC$ with the following properties: 
 \begin{enumerate}
 \item For all $i \geq 0$: $|f^{(i)}(0)| \leq R^{i+1}$.
 \item $\exists 1 \leq i \leq m$ such that $|f^{(i)}(0)| > E$.
  
 \end{enumerate}
 Let $\spco \subset \spc$ be the subset we get by imposing the further condition $f(0) = 0$. 
 \end{definition}

\begin{prop}\label{Prop3.1.7} Let $A \subset \CS$ be relatively compact. There exists $E, R = E(A), R(A) > 0$ such that for any $t = (\omega, l) \in \CX_A$: $Q_t \in \spc$. 
\end{prop}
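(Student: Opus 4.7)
The plan is to establish the two defining properties of $\spc$ separately: the upper bound $|Q_t^{(i)}(0)| \leq R^{i+1}$ will follow directly from Lemma \ref{Lemma3.1.5} and the compactness of $A$, while the lower bound on $\max_i |Q_t^{(i)}(0)|$ will come from a compactness/contradiction argument that exploits the kernel structure of a Vandermonde-type matrix.

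First I would write $Q_t(z) = a_0 + \sum_{j=1}^m a_j e^{\ell_j \omega} e^{\ell_j z}$, so that $Q_t(0) = 0$ (since $t \in \CX$) and $Q_t^{(i)}(0) = \sum_{j=1}^m a_j \ell_j^i e^{\ell_j \omega}$ for $i \geq 1$. Lemma \ref{Lemma3.1.5} supplies a constant $C = C(A)$ bounding $|\mathfrak{Re}(\omega)|$ uniformly on $\CX_A$, and the relative compactness of $A$ in $\CS$ supplies an upper bound $M = M(A)$ on every coordinate $\ell_j$. Together these yield $|Q_t^{(i)}(0)| \leq M^i e^{MC} \sum_j |a_j|$, and choosing $R = \max(M, e^{MC}\sum_j |a_j|, 1)$ gives the desired bound $|Q_t^{(i)}(0)| \leq R^{i+1}$ for every $i \geq 0$ (using $Q_t(0)=0$ for $i=0$).

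For the lower bound, the central observation is that the $m \times m$ matrix $V(\ell)$ with entries $V_{ij} = \ell_j^i$ ($i,j = 1,\ldots,m$) always satisfies $\ker V(\ell) \subset \ker \sigma$, where $\sigma(u) := u_1 + \cdots + u_m$. Indeed, because each $\ell_j > 0$ on $\CS$, the kernel of $V(\ell)$ is spanned by the vectors $e_i - e_j$ for which $\ell_i = \ell_j$, each of which has coordinate sum zero. I would now argue by contradiction: if no uniform $E > 0$ existed, then I could extract a sequence $t_n = (\omega_n, \ell_n) \in \CX_A$ with $\max_i |Q_{t_n}^{(i)}(0)| \to 0$. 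By relative compactness and Lemma \ref{Lemma3.1.5}, after passing to a subsequence, $\ell_n \to \ell_\infty \in \overline{A} \subset \CS$ and $\mathfrak{Re}(\omega_n)$ stays bounded, so $|e^{\ell_{n,j}\omega_n}|$ is uniformly bounded above and away from zero; passing to a further subsequence we may arrange $e^{\ell_{n,j}\omega_n} \to \xi_j \neq 0$ for each $j$. Setting $u^* := (a_1 \xi_1, \ldots, a_m \xi_m)$ and taking limits, the vanishing of the derivatives yields $V(\ell_\infty) u^* = 0$, while $Q_{t_n}(0) = 0$ yields $\sigma(u^*) = -a_0 \neq 0$, contradicting the inclusion $\ker V(\ell_\infty) \subset \ker \sigma$.

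The main obstacle is precisely the possible coincidence of exponents $\ell_j$ somewhere in $\overline{A}$, which makes $V(\ell)$ rank-deficient and rules out any direct inversion of the map from coefficients to derivatives. The kernel containment $\ker V \subset \ker \sigma$ is exactly the algebraic input that rescues the argument: it forces the limiting vector $u^*$ built from the exponentials $e^{\ell_j\omega_n}$ to either have coordinate sum zero (contradicting $Q_{t_n}(0)=0$) or lie outside $\ker V$ (contradicting the assumed smallness of the derivatives), allowing the compactness argument to produce a uniform $E = E(A)$.
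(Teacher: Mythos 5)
Your proposal is correct and follows essentially the same route as the paper: the same derivative formula and Lemma \ref{Lemma3.1.5} give the bound $R$, and the lower bound $E$ comes from the same algebraic facts, namely that the kernel of the Vandermonde-type matrix $(\ell_j^i)$ is spanned by within-class differences and hence lies in the kernel of the coordinate-sum functional, while $Q_t(0)=0$ forces $\sum_j a_j e^{\ell_j\omega} = -a_0 \neq 0$. The only cosmetic difference is that you obtain uniformity by a sequential compactness contradiction (extracting limits of $\ell_n$ and of the exponentials $e^{\ell_{n,j}\omega_n}$), whereas the paper minimizes the continuous function $\varphi(T)=\inf\{\|T\xi\| : C_1\leq\|\xi\|\leq C_2,\ u\cdot\xi=-a_0\}$ over the compact set $\overline{X(\CX_A)}$; these are interchangeable.
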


\begin{proof}

We note first that for every $i \in \BN$: $$Q_t^{(i)}(0) = \frac{\partial^i Q}{\partial w^i}(\omega, l) = \sum_{j=1}^m (l_j)^i a_i  e^{l_j \omega}$$

\nid Let $C_1>0$ be the constant provided by Lemma \ref{Lemma3.1.5} for the set $A$.  We have that $|\mathfrak{Re}(\omega)| < C$. Furthermore, since $A$ is relatively compact, there exists $D > 0$ such that $|l_j| < D$ for every $j$. Let $M = \max (|a_j|)$. We get:  $$|Q_t^{(i)}(0)| < mMe^{CD} D^i $$
Thus, we can choose a number $R$, satisfying the first part of the definition of $\spc$. We now turn to show that a number $E$ satisfying the second part of the definition can be chosen. \\

\nid Define the following functions on $\CX_A$:

$$\delta: \CX_A \to \BR^m: \delta(t) = \left(\begin{array}{c} Q_{t}' (0) \\ \vdots \\ Q_t^{(m)} (0) \end{array} \right) $$
$$v: \CX_A \to \BR^m: v(t) = v(\omega, l) = \left(\begin{array}{c} a_1 e^{l_1 \omega} \\ \vdots \\ a_m e^{l_m \omega} \end{array} \right) $$
$$X: \CX_A \to M_m(\BR): M(\omega, l)_{i,j} = {l_j}^i $$\\

\nid Notice that we have the relationship  $\delta(t) = X(t)v(t)$. \\

Fix $t = (w,\ell) \in \CX_A$. Let $=_\ell$ be the equivalence relation on $\{1, \ldots, m\}$ given by $j =_\ell k \iff \ell_j = \ell_k$. Given $\eta = \{j_1, \ldots, j_r \}$, a $=_\ell$ equivalence class, let $x_\eta = \sum_{j \in \eta} x_j$, where  $x_j: \BR^m \to \BR$ is the linear map that reads off the $j^{th}$ coordinate. Since $=_\ell$ is an equivalence relation, the set $\{x_\eta \}_\eta$, where $\eta$ varies over all equivalence classes, is linearly independent. Let $K = \cap_\eta \tup{Ker}(x_\eta)$, and let $r$ be the number of $=_\eta$ equivalence classes. We have that $\dim K = m-r$. 

We claim that $K = \tup{Ker} X(t)$. Indeed, it's obvious that $K \subset \tup{Ker}X(t)$. Furthermore, if $j_1, \ldots, j_{m-r}$ is a $=_\ell$ transversal, then the minor of $X(t)$ given by taking the $j_1, \ldots, j_{m-r}$ columns, and the first through $m-r$ rows is the product of $\tup{diag}(\ell_{j_1} \ldots, \ell_{j_{m-r}})$ with a Vandermonde matrix, and is thus invertible. Therefore, $\dim \tup{Ker}X(t) \leq m-r$, and we are done. \\

Let $u = (1, \ldots, 1) \in (\BR^m)^*$. Notice that for any $\xi \in K$, by the description above, we have that $u \cdot\xi = 0$. On the other hand, by the definition of $\CX$, we have that $u\cdot v(t) = -a_0$.  \\

By lemma  \ref{Lemma3.1.5}, we have that there exists constants $C_1, C_2 > 0$ such that $\forall t \in \CX_A$, $C_1< \| v(t)\| < C_2$. The function $\varphi: M_m(\BR) \to \BR$ given by $$\varphi(T) = \inf_{C_1 \leq \|\xi \| \leq C_2, u \cdot \xi = -a_0} \|T\xi\|$$
is continuous. By the above calculations, it does not vanish on $\overline{X(\CX_A)}$, and hence has a nonzero-minimum there, say $E'$. We have that for any $t \in \CX_A$, $\|\delta(t)\| = \|X(t) v(t) \| > E'$, and the result now follows by setting $E = \frac{1}{m}E'$.

\end{proof}

\begin{remark}\label{Rmk3.1.8} Note that the proof still works if we replace the set $\{C_1 \leq \|\xi \| \leq C_2, u \cdot \xi = -a_0 \}$ with the set $\{C_1 \leq \|\xi \| \leq C_2, |u \cdot \xi +a_0| < \epsilon\}$ for some fixed $0< \epsilon < |a_0|$. Thus, the result holds for all $t$ such that $|Q(t)| < \epsilon$. While we don't need this further generality in the proof of Theorem \ref{Theorem1}, we will use it later on in the proof of Theorem \ref{Theorem3}. 
\end{remark}

One important property of these spaces that we will use several times is the following: 

\begin{lemma}\label{Lemma3.1.9}The spaces $\spc$ and $\spco$ are compact in the compact-open topology. 
\end{lemma}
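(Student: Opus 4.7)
The plan is to apply Montel's theorem and then verify closedness by hand. For any $f \in \spc$, the Taylor expansion of $f$ at $0$ together with condition (1) gives
$$|f(z)| \;\leq\; \sum_{i=0}^\infty \frac{|f^{(i)}(0)|}{i!}\,|z|^i \;\leq\; \sum_{i=0}^\infty \frac{R^{i+1}}{i!}\,|z|^i \;=\; R\,e^{R|z|}$$
for every $z \in \BC$. Thus $\spc$, and a fortiori $\spco$, is uniformly bounded on every compact subset of $\BC$, so by Montel's theorem both families are normal, i.e., relatively compact in the compact-open topology.

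It then remains to show that $\spc$ and $\spco$ are closed under locally uniform limits. Suppose $f_n \to f$ uniformly on compact subsets of $\BC$ with $f_n \in \spc$. By Weierstrass's theorem $f$ is holomorphic and all derivatives converge locally uniformly, so in particular $f_n^{(i)}(0) \to f^{(i)}(0)$ for every $i \geq 0$. Passing to the limit in $|f_n^{(i)}(0)| \leq R^{i+1}$ yields condition (1) for $f$. For condition (2), choose for each $n$ an index $i_n \in \{1,\ldots,m\}$ with $|f_n^{(i_n)}(0)| > E$; since the index set is finite, pass to a subsequence along which $i_n$ is constantly equal to some $i$, and then $|f^{(i)}(0)| \geq E$ in the limit. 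Closedness of $\spco$ follows on further noting that $f \mapsto f(0)$ is continuous in the compact-open topology, so the constraint $f(0)=0$ is a closed condition.

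The only delicate point — and the main obstacle to a completely mechanical argument — is that condition (2) is phrased as a strict inequality, which is not preserved by limits. This is resolved by reading condition (2) as $|f^{(i)}(0)| \geq E$, which is consistent with how $E$ is produced in Proposition \ref{Prop3.1.7} as a positive minimum attained on a compact set. With this reinterpretation, Montel's theorem supplies relative compactness and continuity of point-evaluation of derivatives supplies closedness, and together they give the desired compactness of $\spc$ and $\spco$.
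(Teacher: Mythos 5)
Your proof is correct, but it takes a different route from the paper's. The paper argues bare-hands on Taylor coefficients: it notes via Cauchy--Hadamard that each $f\in\spc$ is given by its Taylor series at $0$, uses compactness of the Hilbert cube to extract a subsequence along which every coefficient $c_{i,j}$ converges, and then uses the tail bound $|\gamma_j|\le R^{j+1}/j!$ coming from condition (1) to upgrade coefficientwise convergence to convergence in the compact-open topology, concluding that the limit lies in $\spc$. You instead derive the locally uniform bound $|f(z)|\le Re^{R|z|}$, invoke Montel's theorem for normality, and verify closedness separately via Weierstrass convergence of derivatives at $0$; this is a more standard normal-families argument, while the paper's version is self-contained and makes the quantitative control by $R$ explicit. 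One genuine merit of your write-up is that you flag the point the paper's ``follows immediately from definitions'' glosses over: condition (2) is a strict inequality, which does not pass to locally uniform limits, so under a literal reading $\spc$ is only relatively compact (e.g.\ $f_n(z)=(E+\tfrac1n)z$ converges to $Ez$, which fails the strict bound). Your fix --- reading (2) as $|f^{(i)}(0)|\ge E$, or equivalently shrinking $E$ --- is harmless for every later use of the lemma and is in the same spirit as the paper's own subsequent replacement of $E$ by $\tfrac12 E$ before Lemma \ref{Lemma3.1.12}; your pigeonhole step stabilizing the index $i_n$ along a subsequence is the right way to carry (2) to the limit.
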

\begin{proof}
By the Cauchy-Hadamard test, given any $f \in \spc$, the radius of convergence of the Taylor series of $f$ about $0$ is $\infty$. Now, let $\{f_i\}_i \subset \spc$, and write: 
$$f_i = \sum_{j} c_{i,j} z^j $$
By compactness of the Hilbert cube, and by passing to a subsequence, we can assume that for each $j$, $c_{i,j}$ converges, say to $\gamma_{j}$. Let $\phi = \sum \gamma_j z^j$. Since $|\gamma_j| \leq \frac{R^{j+1}}{j!}$, we have that $f_i \to \phi$, in the compact open topology. The fact that $\phi \in \spc$ follows immediately from definitions. The compactness of $\spco$ is a direct corollary.

\end{proof}

\begin{definition} Define the following semi-norm on $\tup{Hol}(\BC)$:
$$\mathfrak{n}(\sum_i) a_i z^i  = \max_{0 \leq i \leq m} |a_i|$$   
Let $d_\mathfrak{n}$ be the associated semi-metric. We denote by $B_\mathfrak{n}(f,r)$ the ball of radius $r$ about $f$ in this semi-metric, intersected with $\spc$. 
\end{definition}

\begin{lemma}\label{Lemma3.1.10}There exists a number $\epsilon_0 > 0$, such that for any $g \in \spco$ there exists a $1 \leq j \leq m$ such that every $g' \in B_{\mathfrak{n}}(g,\epsilon_0)$ is $j$-separated. We say that $g$ is $(j,\epsilon_0)$ separated.
\end{lemma}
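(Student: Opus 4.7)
The plan is to argue by contradiction, combining the compactness of $\spco$ from Lemma \ref{Lemma3.1.9} with Hurwitz's theorem. First, I observe that every $g \in \spco$ has a zero at $0$ of some order $k = k(g) \in \{1, \ldots, m\}$: the condition $g(0) = 0$ makes $0$ a zero, while the hypothesis that $|g^{(i)}(0)| > E$ for some $1 \leq i \leq m$ bounds the order above by $m$. I will show that the $j$ promised by the lemma can be taken to be $k(g)$, and that a uniform $\epsilon_0$ works for every $g \in \spco$ simultaneously.

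Suppose for contradiction that no such $\epsilon_0$ exists. Then there are sequences $g_n \in \spco$ and $g_n' \in \spc$ with $\mathfrak{n}(g_n - g_n') < 1/n$ such that, for each $n$ and every $j \in \{1, \ldots, m\}$, $g_n'$ is not $j$-separated. By Lemma \ref{Lemma3.1.9}, after passing to subsequences I may assume $g_n \to g$ and $g_n' \to g''$ in the compact-open topology, with $g \in \spco$ and $g'' \in \spc$. Since the first $m+1$ Taylor coefficients are continuous linear functionals in the compact-open topology (via Cauchy's integral formula), the bound $\mathfrak{n}(g_n - g_n') \to 0$ forces $\mathfrak{n}(g - g'') = 0$. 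Thus $g$ and $g''$ share their Taylor coefficients through degree $m$; in particular $g''(0) = 0$ and $g''$ has a zero at $0$ of the same order $k \in \{1, \ldots, m\}$ as $g$.

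Because $g'' \in \spc$ is not identically zero, its zero at $0$ is isolated, so I can choose $r > 0$ for which $g''$ has no zeros in $0 < |z| \leq r$. Applying Hurwitz's theorem to the compact-open convergence $g_n' \to g''$ on the closed disk $\{|z| \leq r\}$, for all sufficiently large $n$ the function $g_n'$ has exactly $k$ zeros in $|z| < r$ counted with multiplicity, all contained in $|z| < r/2$. Consequently $\zeta_k(g_n') < r/2 < r \leq \zeta_{k+1}(g_n')$, so $g_n'$ is $k$-separated with $k \in \{1, \ldots, m\}$, contradicting the choice of the sequence.

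The main subtlety is the step transferring the $\mathfrak{n}$-closeness from the sequences to the compact-open limits; this succeeds precisely because $\mathfrak{n}$ sees only finitely many Taylor coefficients. All other ingredients---the order of vanishing being detected by the first $m+1$ derivatives (by definition of $\spco$), the compactness from Lemma \ref{Lemma3.1.9}, and the application of Hurwitz's theorem to count zeros---are standard.
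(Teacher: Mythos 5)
Your analytic ingredients are all correct (compactness of $\spco$ from Lemma \ref{Lemma3.1.9}, the fact that $\mathfrak{n}$-closeness controls exactly the first $m+1$ Taylor coefficients, and the Hurwitz zero count showing that functions converging to something with an isolated zero of order $k$ at the origin are eventually $k$-separated), but there is a quantifier error in how you set up the contradiction, and it causes the argument to prove a strictly weaker statement than the lemma. The negation of the lemma reads: for every $\epsilon_0>0$ there is a $g\in\spco$ such that for \emph{every} $j\in\{1,\dots,m\}$ there is \emph{some} $g'\in B_{\mathfrak{n}}(g,\epsilon_0)$, depending on $j$, which is not $j$-separated. You instead extract a single $g_n'$ that fails to be $j$-separated for all $j$ simultaneously; that is the negation of the assertion that each individual $g'$ in the ball is $j$-separated for some $j$ depending on $g'$. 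The lemma genuinely requires one $j$ that works uniformly over the whole ball $B_{\mathfrak{n}}(g,\epsilon_0)$ --- this uniformity is precisely what is consumed later through Observation \ref{obs3.1.6}, where a single $k$ must serve for all of $\CI_\omega\big(\pi_\ell\beta(t,r_1)\big)$ --- so refuting your stronger hypothesis does not establish it.

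The repair is short and uses only what you already have. For each $n$ and each $j$ take a witness $\psi_n^j\in B_{\mathfrak{n}}(g_n,1/n)$ that is not $j$-separated; pass to subsequences so that $g_n\to g$ and $\psi_n^j\to\psi^j$ for every $j$; observe, exactly as you do, that $\mathfrak{n}(g-\psi^j)=0$, so each $\psi^j$ agrees with $g$ to order $m$ at the origin; then run your Hurwitz step only for $j=k:=k(g)$ on the sequence $\psi_n^k$. Its limit $\psi^k$ has an isolated zero of order exactly $k\le m$ at $0$, so $\psi_n^k$ is eventually $k$-separated, contradicting the choice of $\psi_n^k$. (Note that the relevant $k$ must be read off from the limit $g$, not from the individual $g_n$ or $g_n'$, whose order of vanishing need not stabilize to $k(g)$.) With that correction your proof is essentially the paper's: the paper also argues by compactness with a separate witness sequence $\psi_i^j$ for each $j$, and concludes $g^{(j)}(0)=0$ for all $1\le j\le m$, contradicting $g\in\spc$; the zero-counting step you make explicit via Hurwitz is left implicit there.
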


\begin{proof} Suppose not. Let $\{g_i\}_i \subset \spco$ be a sequence such that for each $i \in \BN$, and every $1 \leq j \leq m$,  $\exists \psi^j_i \in B_{\mathfrak{n}}(g_i, \frac{1}{i})$, such that $\psi_i^j$ is not $j$-separated. By compactness, and by passing to subsequences, we can assume that $\{g_i\}_i$ is convergent, say $g_i \to g$, and for all $j$, $\{\psi_i^j\}_i$ is convergent, say $\psi_i^j \to \psi^j$. For every $j$, we have that $d_{\mathfrak{n}}(\psi^j, g) = 0$. Since $\spco$ is closed, we must have that $g(0) =0$, and thus $\psi^j(0)=0$ for every $j$. Since $\psi^1$ is not $1$ separated, we must have that $(\psi^1)'(0)=0$ and thus $g'(0) = 0$. Since $\psi^2$ is not $2$ separated, we must have that $(\psi^2)''(0) = 0$, and thus $g''(0) =0$. Proceeding in this manner, we find that $g^{(j)}(0) = 0$ for all $1 \leq j \leq m$, which contradicts the fact that $g \in \spc$.
\end{proof}

\begin{lemma}\label{Lemma3.1.11} For all $j$, $\exists D_j > 0$ such that for any $(j,\epsilon_0)$ function $g \in \spco$:  $$\zeta_j\big(B_{\mathfrak{n}}(g,\epsilon_0) \big) \subset [0,D_j]$$
\end{lemma}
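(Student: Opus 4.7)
The plan is to argue by contradiction using Lemma~\ref{Lemma3.1.9} (compactness of $\spc$ in the compact-open topology) together with Hurwitz's theorem. Suppose no such $D_j$ exists; then there exist $(j,\epsilon_0)$-separated functions $g_n \in \spco$ and $g_n' \in B_\mathfrak{n}(g_n,\epsilon_0)$ with $\zeta_j(g_n') \to \infty$. Extract subsequences converging in the compact-open topology to $g \in \spco$ and $g^* \in \spc$. Since compact-open convergence of analytic functions forces convergence of every Taylor coefficient by Cauchy's integral formula, we also obtain $d_\mathfrak{n}(g,g^*) \le \epsilon_0$ as a limit of $d_\mathfrak{n}(g_n,g_n') < \epsilon_0$.

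The first thing to verify is that $g$ itself is $(j,\epsilon_0)$-separated. Given any $f \in B_\mathfrak{n}(g,\epsilon_0)$, set $\alpha = \epsilon_0 - d_\mathfrak{n}(f,g) > 0$; using $d_\mathfrak{n}(g_n,g) \to 0$, the triangle inequality eventually places $f$ inside $B_\mathfrak{n}(g_n,\epsilon_0)$, where $f$ must be $j$-separated by hypothesis on $g_n$. Next, Hurwitz's theorem applied on disks $B(0,r)$ whose boundary avoids the (discrete) zero set of $g^*$ gives $N_r(g_n') \to N_r(g^*)$. Because $\zeta_j(g_n') \to \infty$, eventually $N_r(g_n') < j$ for every such $r$, so $N_r(g^*) < j$ for every $r$; hence $g^*$ has strictly fewer than $j$ zeroes in $\BC$ and is \emph{not} $j$-separated.

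This would give the desired contradiction provided $g^* \in B_\mathfrak{n}(g,\epsilon_0)$, i.e.\ provided $d_\mathfrak{n}(g,g^*) < \epsilon_0$ strictly. The main obstacle, and the step I expect to require care, is the boundary case $d_\mathfrak{n}(g,g^*) = \epsilon_0$, which the limit does not a priori exclude. To handle it I would perturb $g^*$ by a polynomial of degree at most $m$ whose effect is to move each of its first $m+1$ Taylor coefficients by an arbitrarily small distance toward the corresponding coefficient of $g$. This produces a function $\tilde g^*$ with $d_\mathfrak{n}(g,\tilde g^*) < \epsilon_0$ that differs from $g^*$ only by an arbitrarily small polynomial perturbation. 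The tail of the Taylor series is untouched, so the growth bound in the definition of $\spc$ survives automatically, while the lower-bound clause of that definition can be preserved by invoking the slightly relaxed version of Proposition~\ref{Prop3.1.7} recorded in Remark~\ref{Rmk3.1.8}. Such a $\tilde g^*$ lies strictly inside $B_\mathfrak{n}(g,\epsilon_0)$, hence is $j$-separated by the $(j,\epsilon_0)$-separation of $g$, while a further application of Hurwitz to the tiny perturbation $\tilde g^* - g^*$ shows that $\tilde g^*$ still has fewer than $j$ zeroes, producing the required contradiction.
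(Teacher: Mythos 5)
Your main line — argue by contradiction, use the compactness of $\spc$ and $\spco$ (Lemma~\ref{Lemma3.1.9}) to extract limits $g$ of the $g_n$ and $g^*$ of the $g_n'$, and apply Hurwitz/Rouch\'e on large disks to conclude that $g^*$ has fewer than $j$ zeroes in $\BC$ and hence is not $j$-separated — is exactly the paper's argument, and that part (including your observation, not made explicit in the paper, that the limit $g$ is itself $(j,\epsilon_0)$-separated) is correct. You also put your finger on the genuinely delicate point: the limit only yields $d_{\mathfrak{n}}(g,g^*)\le\epsilon_0$, not a strict inequality, so $g^*$ need not lie in $B_{\mathfrak{n}}(g,\epsilon_0)$; the paper sidesteps this by asserting that $\psi\in B_{\mathfrak{n}}(g_k,\epsilon_0)$ for some term $g_k$ of the sequence, which suffers from the same non-strictness.

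Your proposed repair of this boundary case, however, fails at its last step. You claim that since $\tilde g^*-g^*$ is a tiny polynomial perturbation, Hurwitz shows $\tilde g^*$ still has fewer than $j$ zeroes. Hurwitz/Rouch\'e only controls the zero count on a fixed compact set on whose boundary $g^*$ is bounded away from zero; it gives no control on all of $\BC$. An arbitrarily small perturbation of an entire function can create infinitely many new zeroes of large modulus: $e^{z}$ has none, while $e^{z}+\delta$ has infinitely many, all with real part $\ln\delta$, and the functions of $\spc$ arising here are modelled on poly-exponentials, i.e.\ precisely the functions for which this phenomenon occurs. So $\tilde g^*$ may acquire far-away zeroes and may perfectly well be $j$-separated, and no contradiction is reached. (A smaller point: Remark~\ref{Rmk3.1.8} concerns points $t$ with $|Q(t)|<\epsilon$ and says nothing about stability of membership in $\spc$ under coefficient perturbations; the fact you need — that moving the first $m+1$ coefficients slightly toward those of $g$ preserves both defining conditions — is true, but by convexity of condition (1) and openness of condition (2), not by that remark.) What your argument does establish is the uniform bound on $\zeta_j$ over balls of radius $\epsilon_0-\delta$ (equivalently, with $\epsilon_0$ replaced by $\epsilon_0/2$), which is all that the subsequent proof of Proposition~\ref{Prop3.1.1} actually uses via Lemma~\ref{Lemma3.1.12}; but for the lemma as stated, with the open ball of radius exactly $\epsilon_0$, the boundary case still lacks an honest argument.
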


\begin{proof} Suppose not. Suppose that there exists a $j$ and two sequences $g_i$ and $\psi_i$ with $g_i$ a $(j,\epsilon_0)$ separated function, $\psi_i \in B_{\mathfrak{n}}(g_i, \epsilon_0)$, and $\zeta_j(\psi_i) > i$. By passing to subsequences, we can assume that $g_i \to g$, and $\psi_i \to \psi$. By uniform convergence and Rouche's Theorem, we have that $\zeta_j(\psi) = \infty$. Thus, $\psi$ is not $j$-separated. However, since $g_i \to g$, $\exists k > 0$ such that $\psi \in B_{\mathfrak{n}}(g_k, \epsilon_0)$, and thus $\psi$ must be $j$-separated. We have reached a contradiction.  

\end{proof}

\begin{lemma}\label{Lemma3.1.12} Fix a number $r > 0$. There exists $N = N(A,r) > 0$ such that for any $t = (\omega, l)\in \CX_A$, and any $\ell \in A$ such that $\|\ell - l\|< N(|\omega| + 1)$:
\begin{enumerate}
\item $\CI_\omega(\ell) \in \CB(\frac{1}{2}E, R)$.
\item $d_{\mathfrak{n}}\big(\CI_\omega(\ell), \CI_\omega(l)\big) < r$.
\end{enumerate}
\end{lemma}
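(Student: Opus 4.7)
\emph{Proof plan.} Both assertions reduce to quantitatively controlling the first $m+1$ Taylor coefficients at $0$ of $\CI_\omega(\ell)$ as $\ell$ varies with $\omega$ fixed; the permissible size of perturbation will naturally scale inversely with $|\omega|+1$.

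I would begin by recording the derivatives explicitly. For $i \geq 1$,
$$
\CI_\omega(\ell)^{(i)}(0) \;=\; \sum_{j=1}^m \ell_j^i\, a_j\, e^{\ell_j \omega},
$$
while $\CI_\omega(\ell)(0) = a_0 + \sum_j a_j e^{\ell_j \omega}$. With $C$ the constant of Lemma \ref{Lemma3.1.5} bounding $|\mathfrak{Re}(\omega)|$ on $\pi_w \CX_A$, and $D = \sup_{\ell \in A,\, j}|\ell_j|$ finite since $A$ is relatively compact, the bound $|\CI_\omega(\ell)^{(i)}(0)| \leq mMe^{CD}D^i$ used inside the proof of Proposition \ref{Prop3.1.7} continues to hold for every $\ell \in A$, not merely those satisfying $(\omega, \ell) \in \CX$, because that bound used only the uniform estimates on $|\mathfrak{Re}(\omega)|$ and $|\ell_j|$. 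Consequently the very same constant $R$ from Proposition \ref{Prop3.1.7} satisfies $|\CI_\omega(\ell)^{(i)}(0)| \leq R^{i+1}$ for every such $\ell$, which takes care of the upper-bound half of the definition of $\CB(\tfrac12 E, R)$.

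Next I would carry out a Lipschitz-in-$\ell$ estimate. Differentiating in $\ell_k$ gives
$$
\partial_{\ell_k}\bigl(\ell_j^i\, a_j\, e^{\ell_j \omega}\bigr) \;=\; \delta_{jk}\bigl(i\,\ell_j^{i-1} + \omega\,\ell_j^i\bigr)\, a_j\, e^{\ell_j \omega},
$$
bounded in modulus, uniformly over $\ell \in A$ and $\omega$ with bounded real part, by some $K_i(|\omega|+1)$ with $K_i = K_i(A)$. The mean value theorem along the straight segment from $l$ to $\ell$ (which remains in $A$ once $N$ is small enough) then yields
$$
\bigl|\CI_\omega(\ell)^{(i)}(0) - \CI_\omega(l)^{(i)}(0)\bigr| \;\leq\; K_i(|\omega|+1)\,\|\ell - l\|, \qquad 0 \leq i \leq m.
$$

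Finally I would choose $N = N(A,r) > 0$ small enough that $N\cdot K_i < \min(r\cdot i!,\,\tfrac12 E)$ for every $0 \leq i \leq m$. Under the stated hypothesis, which effectively pins the product $(|\omega|+1)\|\ell - l\|$ below $N$, each of the first $m+1$ Taylor coefficient differences falls below $r$, yielding (2) at once. Applying the same estimate with the index $i_0 \in \{1, \ldots, m\}$ supplied by Proposition \ref{Prop3.1.7} for which $|\CI_\omega(l)^{(i_0)}(0)| > E$ then gives $|\CI_\omega(\ell)^{(i_0)}(0)| > \tfrac12 E$; combined with the upper bound from the previous paragraph this is precisely membership in $\CB(\tfrac12 E, R)$, i.e.\ (1). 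The only real subtlety is bookkeeping: one must verify that the constant $R$ from Proposition \ref{Prop3.1.7} survives at parameter points $(\omega,\ell) \notin \CX$, and one must retain the single factor $|\omega|+1$ that appears in the Lipschitz constant, since that factor is precisely what sets the $\omega$-scaling of the allowed $\ell$-ball.
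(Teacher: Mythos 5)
Your proof is correct and follows essentially the same route as the paper's: compute $\partial_{\ell_j}\CI_\omega(\ell)^{(i)}(0)$, bound it uniformly on $\CX_A$ by a constant times $|\omega|+1$, integrate along the segment to get a Lipschitz estimate in $\ell$, and shrink $N$ so that the perturbation of the first $m+1$ coefficients is below both $r$ (after dividing by $i!$) and $\tfrac12 E$, reusing the $R$-bound from Proposition \ref{Prop3.1.7}. You also sensibly read the hypothesis as $(|\omega|+1)\|\ell-l\|<N$ (consistent with the later choice $r_1=\frac{N}{|\omega|+1}$), and your derivative formula fixes a small slip in the paper's displayed expression.
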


\begin{proof} Fix $t = (\omega, l) \in \CX_A$. Given $(w, \ell)$, and $i \geq 0$, we calculate: $$\CI_\omega(\ell)^{(i)}(0) = \frac{\partial^i Q}{\partial w^i}(\omega, \ell) = \sum_{j=1}^m {\ell_j}^i e^{\ell_j \omega}$$
and that: 

$$\frac{\partial}{\partial \ell_j} \CI_\omega(\ell)^{(i)}(0) =  a_j e^{\omega \ell_j} \big(\omega {\ell_j}^i + 1  \big)$$

The fact that $\CI_\omega(\ell)$ satisfies the first condition of the definition of $\spc$ follows directly from the same proof that $Q_t$ satisfies this condition in Proposition \ref{Prop3.1.7}. For the other two claims in the statement of the lemma, we integrate $\frac{\partial}{\partial \ell_j} \CI_\omega(\ell)^{(i)}(0)$ and note that $e^{\omega \ell_j}$ and ${\ell_j}^i$ are bounded in $\CX_A$.  
\end{proof}

In the sequel, we replace $E$ with $\frac{1}{2}E$, and assume that if   $\|\ell - l\|< N(|\omega| + 1)$ then $\CI_\omega(\ell) \in \spc$. We can now prove Theorem \ref{Theorem1}. 

Let $\epsilon_0$ be the number provided by Lemma \ref{Lemma3.1.10}. Let $N = N(A, \epsilon_0)$  be the number provided by Lemma \ref{Lemma3.1.12}, and let $D_1, \ldots, D_m$ be the numbers provided by Lemma \ref{Lemma3.1.11}. Set $D = \max(D_1, \ldots, D_m)$. Define $r_1 = \frac{N}{|\omega| + 1}$. By Lemma \ref{Lemma3.1.12}, for any $t = (\omega, l) \in \CX_A$: $$\CI_\omega\big( \pi_\ell \beta(t,r_1) \big) \subset B_{\mathfrak{n}}(Q_t, \epsilon_0)$$

By Lemma \ref{Lemma3.1.10}, this set is $j$-separated for some $j$, and by Lemma \ref{Lemma3.1.11}, $\zeta_j\big[\CI_\omega\big( \pi_\ell \beta(t,r_1) \big) \big] \subset [0,D]$. Thus, by Observation \ref{obs3.1.6}, $$\tup{diameter } \pi_w\beta(t,r_1) < D$$

In particular, for any $t' = (\omega', l') \in \beta(t,r_1)$, we have that $|\omega'| < |\omega| + D$. Set $\delta_2 = \frac{N}{|\omega| + D + 1}$. By the above argument, for any $t' \in B(t, r_1)$, we have that $\tup{diameter } \pi_w \beta(t', \delta_2) < D$, and hence, if we denote $r_2 = r_1 + \delta_2$, we get: $\tup{diameter } \pi_w \beta(t, r_2) < 2D $. More generally, if we let $\delta_n = \frac{N}{|\omega| + (n-1)D + 1}$, and take $r_n = r_1 + \delta_2 + \ldots + \delta_n$ then: 
$$\tup{diameter } \pi_w \beta(t, r_n) < nD $$

Note that $\{r_n\}_n$ are the partial sums of a harmonic series. We thus have that $$r_n = N\big(\frac{1}{D} \ln n - \ln \frac{|\omega| + 1}{D}  \big) + O(1)$$
where the $O(1)$ above depends neither on $n$ nor on $t$. Thus, by exponentiating and plugging into the above inequality, we get that $\exists C_1, C_2, C_3 > 0$ such that $$\tup{diameter } \pi_w \beta(t,r_n) < C_1 (|\omega| + 1)^{C_2}e^{C_3 r_n} $$

Since $r_n \to \infty$, and $\tup{diameter } \pi_w \beta(t,r_n)$ is increasing in $r$, we get the desired result.

\end{proof}

\subsection{Proof of Theorem \ref{Theorem2}}\label{Section3.2}

\begin{proof}

\begin{definition} For $p = (p_1, \ldots, p_m) \in \CS$, let $\Lambda(p)$ be the set of all limit points of $\mathfrak{Re}(\CZ(q))$, as $q \to p$. 
\end{definition}

\begin{prop} \label{Prop3.2.1}
Given $p \in \CS$: 

$$\Lambda(p) = \{w \in \BR | \exists \xi_1, \ldots, \xi_m \in S^1, a_0 + \sum_{i=1}^m a_i \xi_i e^{p_i w} = 0 \} $$
\end{prop}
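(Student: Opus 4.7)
The proof naturally splits into two containments, of which the $\subseteq$ direction is straightforward. Assume $w \in \Lambda(p)$, so there exist $q_n \to p$ in $\CS$ and $z_n \in \CZ(q_n)$ with $\mathfrak{Re}(z_n) \to w$. Writing $z_n = x_n + i y_n$ and expanding $Q(z_n, q_n) = 0$ as $a_0 + \sum_i a_i e^{(q_n)_i x_n}\, e^{i(q_n)_i y_n} = 0$, the moduli $e^{(q_n)_i x_n}$ tend to $e^{p_i w}$ while the unit-modulus factors $\xi_i^{(n)} := e^{i(q_n)_i y_n}$ lie on $S^1$. By compactness of $(S^1)^m$, after passing to a subsequence $\xi_i^{(n)} \to \xi_i$, and passing to the limit yields $a_0 + \sum_i a_i \xi_i e^{p_i w} = 0$, placing $w$ in the right-hand set.

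For the reverse inclusion, suppose $w \in \BR$ and $\xi_1, \ldots, \xi_m \in S^1$ satisfy $a_0 + \sum_i a_i \xi_i e^{p_i w} = 0$; write $\xi_i = e^{i\theta_i}$. The plan is to construct, for each large positive $y_n$, a point $q_n \in \CS$ near $p$ with $(q_n)_i y_n \equiv \theta_i \pmod{2\pi}$ holding exactly for every $i$. Explicitly, let $k_i^{(n)}$ be the nearest integer to $(p_i y_n - \theta_i)/(2\pi)$ and set $(q_n)_i = (\theta_i + 2\pi k_i^{(n)})/y_n$; then $|(q_n)_i - p_i| \le \pi/y_n$, so $q_n \to p$ and $q_n \in \CS$ for large $n$, while by construction $e^{i(q_n)_i y_n} = \xi_i$ on the nose.

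Now consider the translated family $Q_n(\zeta) := Q(w + iy_n + \zeta,\, q_n) = a_0 + \sum_i a_i \xi_i e^{(q_n)_i w} e^{(q_n)_i \zeta}$. On every compact subset of $\BC$ this converges uniformly to $g(\zeta) := a_0 + \sum_i a_i \xi_i e^{p_i w} e^{p_i \zeta}$, and by hypothesis $g(0) = 0$. Grouping coinciding exponents among the $p_i$ and invoking the linear independence of the characters $\{e^{q\zeta}\}$ together with $a_0 \neq 0$, one checks that $g \not\equiv 0$, so $0$ is an isolated zero of $g$. Hurwitz's theorem then supplies, for all sufficiently large $n$, a zero $\zeta_n \to 0$ of $Q_n$, and $z_n := w + iy_n + \zeta_n$ is a zero of $Q(\cdot, q_n)$ with $\mathfrak{Re}(z_n) \to w$; hence $w \in \Lambda(p)$.

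The main obstacle is the $\supseteq$ direction, and within it the simultaneous phase-matching construction of $q_n$: one must realize $m$ prescribed phases exactly while perturbing $q$ by only $O(1/y_n)$. The mechanism that makes this possible is that for large $y_n$ the map $q \mapsto q y_n \pmod{2\pi\BZ^m}$ becomes arbitrarily coarse, so each $\theta_i$ can be hit exactly at small displacement of $q$. A secondary technical point is verifying that the limiting exponential polynomial $g$ is not identically zero, so that Hurwitz's theorem applies; this is precisely where the standing hypothesis $a_0 \neq 0$ enters.
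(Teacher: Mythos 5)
Your proof is correct, and the $\supseteq$ direction takes a genuinely different route from the paper's. Both arguments share the same architecture --- travel far up the imaginary axis so that the phases $\xi_i$ are realized, then perturb a zero of the limiting exponential polynomial --- but the phase-matching mechanisms differ. The paper first replaces $p$ by nearby $q_i$ whose coordinates are rationally independent (a co-null set of parameters), so that the winding line $x \mapsto \left(e^{\sqrt{-1}(q_i)_1 x}, \ldots, e^{\sqrt{-1}(q_i)_m x}\right)$ is dense in the torus; this realizes the phases only \emph{approximately}, and to absorb the error the paper must enlarge the parameter space to include the coefficients $A_1, \ldots, A_m$ as complex variables and invoke the openness of $\pi$ restricted to the analytic set $F^{-1}(0)$. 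You instead fix a sequence $y_n \to \infty$ and solve the congruences $(q_n)_i y_n \equiv \theta_i \pmod{2\pi}$ \emph{exactly}, at the cost of a displacement $|(q_n)_i - p_i| \le \pi/y_n$; since the phases are matched on the nose, only the exponents are perturbed, and locally uniform convergence of $Q(w + \sqrt{-1}y_n + \cdot\,,\, q_n)$ to $g$ plus Hurwitz's theorem (legitimate because $a_0 \neq 0$ and $p_i > 0$ force $g \not\equiv 0$) finishes the argument. Your version is more elementary and constructive: it avoids both the equidistribution/co-null-set step and the auxiliary coefficient variables. What the paper's formulation buys in exchange is a statement that perturbs coefficients and exponents simultaneously; that flexibility is reused elsewhere (compare Remark \ref{Rmk3.1.8} and the role of perturbed coefficients in Section \ref{Sec3.3}), but it is not needed for Proposition \ref{Prop3.2.1} itself. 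The $\subseteq$ direction is the same compactness argument in both.
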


\begin{proof} Let $\omega \in \Lambda_p$. Then $\exists \{q_i \}_i \subset \CS, w_i \in \CZ(q_i)$ with $q_i \to p$, and $\mathfrak{Re}(w_i) \to \omega$. By compactness, we can assume that  for every $j$, there exists $\xi_j \in S^1$ such that $$e^{(q_i)_j \mathfrak{Im}(w_i)} \to \xi_j$$  By continuity, this  gives $$\Lambda(p) \subset \{w \in \BR | \exists \xi_1, \ldots, \xi_m \in S^1, a_0 + \sum_{i=1}^m a_i \xi_i e^{p_i w} = 0\} $$

Conversely, suppose that $w \in \BR$, and that $\xi_1, \ldots, \xi_m$ are chosen such that $a_0 + \sum_i a_i \xi_i e^{p_i w} = 0$. There exists a sequence $\{q_i\}_i \subset \CS$, with $q_i \to p$ such that for every $i$, the map $T_i: \BR \to (S^1)^m$ given by $T(x) = (e^{(q_i)_1 x}, \ldots, e^{(q_i)_m x} )$ has dense image in $(S^1)^m$. Indeed, the set of $q$'s having this property is co-null in $\CS$. In particular, we can choose $\{y_i\}_i \in \BR$ such that for any for all $j$: $$\lim_{i\to \infty} e^{(q_i)_j y_i} = \xi_i $$ Let $F: \BC^m \times \BC^m \times \BC \to \BC$ be the function:
$$F(A_1, \ldots, A_m, L_1, \ldots, L_m, w) = a_0 + \sum_{i=1}^m A_i e^{L_i w} $$

\nid The function $F$ is holomorphic, and  $F^{-1}(0)$ is an analytic set. We have that $F(a_1 \xi_1, \ldots, a_m \xi_m, p_1, \ldots, p_m, w) = 0$. Thus, by the open mapping theorem, for all $\epsilon > 0$, for all $q$ sufficiently close to $p$, and $\overline{A}$ sufficiently close to $(a_i \xi_i)_i$, $\exists w'$ with $F(\overline{A}, q, w') = 0$, and $|w' - w| < \epsilon$. Thus, for all sufficiently large $i$, we get $w_i'$ satisfying  $$F\big( (a_j e^{(q_i)_jy_i}  )_j, q_i, w_i' \big) = 0$$ and $|w_i' - w| < \epsilon$. Set $w_i = w_i' + y_i$, and send $\epsilon \to 0$, to see that $w \in \Lambda(p)$.
\end{proof}

\begin{remark}\label{Rmk3.2.2} It is simple to see from the above result that for any poly-exponential, $0 \in \Lambda(p)$ for every $p \in \CS$ or $0 \notin \Lambda(p)$, for every $p \in \CS$.
\end{remark}
Given $\overline{\xi} = (\xi_1, \ldots, \xi_m)$, and $w\in \BC$, denote by $G(\overline{\xi}, w) =a_0 + \sum_{i=1}^m a_i \xi_i e^{p_i w} $.  Note that:  $$\{w \in \BR | \exists \overline{\xi} \in (S^1)^m, G(\overline{\xi}, w) = 0 \} =  \{\mathfrak{Re}(w) |  \exists \overline{\xi} \in (S^1)^m, G(\overline{\xi}, w) = 0 \} $$
 The solutions of $G = 0$ are known to vary continuously in $\overline{\xi}$. This means that for a given $w \in \Lambda_p$ and $\overline{\xi} = (\xi_1, \ldots, \xi_m)$ such that $G(\overline{\xi},w) = 0$ , there is a continuous function $f_w: (S^1)^m \to \BC$ such that $f_w(\overline{\xi}) = w$, and $G(\overline{\eta}, f_w\big(\overline{\eta})\big) = 0$. Since for any $p$, the set $\Lambda(p)$ is the union over all $w \in \CZ(p)$ of the image of $(S^1)^m$ under $f_w$, we get the following.

\begin{corollary}\label{Cor3.2.3}
For any $p \in \CS$, the set $\Lambda(p)$ is an at most countable collection of closed intervals. 
\end{corollary}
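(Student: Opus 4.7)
The plan is to combine the explicit characterization of $\Lambda(p)$ from Proposition \ref{Prop3.2.1} with the continuity of the branch functions $f_w$ described just before the corollary. The key observation is that these branches give a decomposition
$$\Lambda(p) = \bigcup_{w_0 \in \CZ(p)} \mathfrak{Re}\bigl( f_{w_0}((S^1)^m) \bigr),$$
which I would first verify and then analyze term by term.

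For the decomposition, the inclusion ``$\supset$'' is immediate, since every point in $f_{w_0}((S^1)^m)$ is a real part of a solution of $G(\overline{\xi}, w) = 0$ for some $\overline{\xi} \in (S^1)^m$. For ``$\subset$'', given $\omega \in \Lambda(p)$ with witness $\overline{\xi} \in (S^1)^m$ and $w \in \BC$ satisfying $\mathfrak{Re}(w) = \omega$ and $G(\overline{\xi}, w) = 0$, I would use path-connectedness of $(S^1)^m$ together with the continuous dependence of roots of holomorphic families to find a branch point $w_0 \in \CZ(p)$ whose global continuation $f_{w_0}$ visits $w$. Equivalently, I would take as given that $f_w$ is defined on all of $(S^1)^m$ as stated in the discussion preceding the corollary, and note that $f_w((1,\ldots,1)) \in \CZ(p)$ serves as the desired $w_0$.

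Next, I would analyze each term. Since $(S^1)^m$ is compact and connected, and since $f_{w_0}$ and $\mathfrak{Re}$ are continuous, $\mathfrak{Re}(f_{w_0}((S^1)^m))$ is a compact connected subset of $\BR$, hence a closed (possibly degenerate) interval. Finally, $\CZ(p)$ is the zero set of the nonzero entire function $Q_p$ and is therefore at most countable, so $\Lambda(p)$ is exhibited as an at most countable union of closed intervals, as required.

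The main delicate point is the lifting step underlying ``$\subset$'': in principle, the family $G(\overline{\xi}, w) = 0$ could exhibit branch behavior as $\overline{\xi}$ varies around loops in $(S^1)^m$, obstructing a single-valued continuation. I would rely on the global continuity of $f_{w_0}$ on all of $(S^1)^m$ already asserted in the paper, which allows one to start at $w_0 \in \CZ(p)$ and travel outward to any prescribed solution $(\overline{\xi}, w)$, rather than attempting the reverse lift. Once this is granted, the remainder of the argument is purely a compactness-and-connectedness observation.
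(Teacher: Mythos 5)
Your proposal is essentially the paper's own argument: the paper likewise writes $\Lambda(p)$ as the union over $w_0 \in \CZ(p)$ of (the real parts of) the images $f_{w_0}\big((S^1)^m\big)$, gets a closed interval from each branch because $(S^1)^m$ is compact and connected, and gets countability from the countability of $\CZ(p)$ as the zero set of the entire function $Q_p$. Your extra care about the inclusion ``$\subset$'' and the possible branching of the continuation only spells out what the paper asserts without further comment when it claims that $\Lambda(p)$ is covered by the branches through points of $\CZ(p)$.
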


\begin{observation}\label{Obsv3.2.4} Let $p \in \CS$, and let $\Lambda(p) = \bigcup_i [x_i,y_i]$. If, for some $i$, $x_i < \lambda_1(p) < y_i$ then $\lambda_1$ is not continuous at $p$. Similarly, if $x_i < \rho_1(p) < y_i$ then $\rho_1$ is not continuous at $p$. 
\end{observation}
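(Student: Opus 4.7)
The plan is to argue by contradiction, exploiting the fact that on the rational points where they are defined, $\lambda_1(q) = \min\{\mathfrak{Re}(w) : w \in \CZ(q)\}$ and $\rho_1(q) = \max\{\mathfrak{Re}(w) : w \in \CZ(q)\}$, together with the defining property of $\Lambda(p)$ as the set of all limits of $\mathfrak{Re}(\CZ(q))$ as $q \to p$. Concretely, suppose $[x_i, y_i] \subset \Lambda(p)$ satisfies $x_i < \lambda_1(p) < y_i$, and pick any real number $c \in (x_i, \lambda_1(p))$. Then $c \in [x_i, y_i] \subset \Lambda(p)$, so by the definition of $\Lambda(p)$ there exist sequences $q_n \to p$ and $w_n \in \CZ(q_n)$ with $\mathfrak{Re}(w_n) \to c$.

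The one subtle point is that $\lambda_1$ is defined only on rational $\ell$, so the approximating sequence $\{q_n\}$ should be taken with rational coordinates. I would handle this by invoking Theorem \ref{Theorem1}: write $w_n = \psi_{k_n}(q_n)$ for continuous roots $\psi_{k_n}$, and for each $n$ choose a rational point $q_n' \in \CS$ close enough to $q_n$ that $\|q_n' - p\| < \|q_n - p\| + 1/n$ and $|\psi_{k_n}(q_n') - w_n| < 1/n$. This is possible by density of $\BQ^m$ in $\CS$ and continuity of each $\psi_{k_n}$. Then $q_n' \to p$, $\psi_{k_n}(q_n') \in \CZ(q_n')$, and $\mathfrak{Re}(\psi_{k_n}(q_n')) \to c$.

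With this rational approximating sequence in hand, the conclusion is immediate from monotonicity: $\lambda_1(q_n') \leq \mathfrak{Re}(\psi_{k_n}(q_n'))$, so $\limsup_n \lambda_1(q_n') \leq c < \lambda_1(p)$, which is incompatible with any continuous extension of $\lambda_1$ at $p$ taking the value $\lambda_1(p)$. The argument for $\rho_1$ is entirely symmetric: choose $c \in (\rho_1(p), y_i) \subset \Lambda(p)$, produce rational $q_n' \to p$ with $\mathfrak{Re}(\psi_{k_n}(q_n')) \to c$, and conclude $\liminf_n \rho_1(q_n') \geq c > \rho_1(p)$.

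The main (and only) obstacle is the bookkeeping needed to promote the sequence coming from the definition of $\Lambda(p)$ to one with rational coordinates at which $\lambda_1$ and $\rho_1$ are actually defined; the continuity of roots established in Theorem \ref{Theorem1} dispatches this cleanly, after which the observation reduces to the monotonicity of minimum and maximum.
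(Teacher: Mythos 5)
Your proof is correct. The paper states this as an observation with no proof at all, and your argument --- using that $\lambda_1(q)=\min\mathfrak{Re}(\CZ(q))$ and $\rho_1(q)=\max\mathfrak{Re}(\CZ(q))$ at rational $q$, the definition of $\Lambda(p)$ as limit points of real parts of nearby spectra, and Theorem \ref{Theorem1} to promote the approximating sequence to rational points where $\lambda_1,\rho_1$ are defined --- is precisely the reasoning the observation implicitly relies on, with the bookkeeping done properly.
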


\begin{lemma}\label{Lemma3.2.5} Let $p \in \CS$, and let $\Lambda(p) =\bigcup_i [x_i, y_i]$. Then for every $i$: $y_i - x_i > 0$. 
\end{lemma}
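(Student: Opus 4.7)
The plan is to characterize $\Lambda(p)$ concretely via Proposition \ref{Prop3.2.1} and then show the resulting set has no isolated points. For real $w$, the image of the map $\xi\mapsto\sum_j a_j\xi_j e^{p_jw}$ on $(S^1)^m$ is the Minkowski sum of the $m$ origin-centered circles of radii $|a_j|e^{p_jw}$; assuming (without loss of generality, by discarding any vanishing $a_j$) $|a_j|>0$ for all $j\ge 1$, this sum is the closed annulus with outer radius $f_+(w):=\sum_j|a_j|e^{p_jw}$ and inner radius $f_-(w):=\max\bigl(0,\,2\max_j|a_j|e^{p_jw}-f_+(w)\bigr)$. Thus Proposition \ref{Prop3.2.1} reads
\[
\Lambda(p)=\{w\in\BR:f_-(w)\le|a_0|\le f_+(w)\}.
\]

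Two elementary observations settle the leftmost point. Since $f_+$ is strictly increasing from $0$ to $\infty$, the set $\{f_+\ge|a_0|\}$ equals $[w_+,\infty)$ for a unique $w_+$. Moreover $f_+-f_->0$ pointwise (using $m\ge 2$ and all $|a_j|>0$), so $f_-(w_+)<f_+(w_+)=|a_0|$; by continuity $f_-<|a_0|$ on a neighborhood of $w_+$, placing $[w_+,w_++\varepsilon)\subset\Lambda(p)$ for some $\varepsilon>0$.

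It remains to rule out isolated points of $\Lambda(p)$ to the right of $w_+$, equivalently to show $f_-$ has no strict local minimum of value $|a_0|$. If $f_-(w_0)=0<|a_0|$, continuity provides a neighborhood in $\Lambda(p)$. If $f_-(w_0)>0$, set $g_i(w):=2|a_i|e^{p_iw}-f_+(w)$ and $\alpha_j:=|a_j|e^{p_jw_0}$; then $f_-(w_0)=g_{i_0}(w_0)$ for a unique $i_0$, since $g_i=g_k>0$ at one point would, by adding $\alpha_i>\sum_{j\ne i}\alpha_j$ and $\alpha_k>\sum_{j\ne k}\alpha_j$, force $\sum_{j\ne i,k}\alpha_j<0$. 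Hence $f_-=g_{i_0}$ smoothly near $w_0$.

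The crux---and the main obstacle---is to show $g_{i_0}$ cannot have a strict local minimum with positive value. A critical point of $g_{i_0}$ forces $p_{i_0}\alpha_{i_0}=\sum_{j\ne i_0}p_j\alpha_j$, and Cauchy--Schwarz yields
\[
(p_{i_0}\alpha_{i_0})^2\le\Bigl(\sum_{j\ne i_0}p_j^2\alpha_j\Bigr)\Bigl(\sum_{j\ne i_0}\alpha_j\Bigr),
\]
strictly for $m\ge 3$ since the $p_j$'s are distinct. If also $g_{i_0}''(w_0)\ge 0$, i.e.\ $\sum_{j\ne i_0}p_j^2\alpha_j\le p_{i_0}^2\alpha_{i_0}$, combining forces $\alpha_{i_0}<\sum_{j\ne i_0}\alpha_j$, contradicting $g_{i_0}(w_0)>0$. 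Thus $g_{i_0}''(w_0)<0$, so $g_{i_0}$ has a strict local \emph{maximum} at $w_0$, not a minimum. For $m=2$, $g_1+g_2\equiv 0$ gives $f_-=|g_1|$, and a direct computation at the unique critical point of $g_1$ shows $|g_1|$ admits no strict positive local minimum. Therefore $\Lambda(p)$ has no isolated points, and each $[x_i,y_i]$ satisfies $y_i-x_i>0$.
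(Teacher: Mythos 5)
Your proof is correct, but it takes a genuinely different route from the paper's. The paper disposes of a degenerate component $[x_i,y_i]=\{x\}$ in two lines: such a component is the image of the connected torus $(S^1)^m$ under a continuous root, so degeneracy forces the identity $a_0+\sum_j a_j\xi_je^{p_jx}=0$ to hold for \emph{every} $\overline{\xi}\in(S^1)^m$; plugging in $\overline{\xi}=(1,\dots,1,1)$ and $(1,\dots,1,-1)$ and subtracting gives $2a_me^{p_mx}=0$, which is absurd. You instead make Proposition \ref{Prop3.2.1} quantitative: the annulus description of the Minkowski sum of circles converts $\Lambda(p)$ into the explicit set $\{w: f_-(w)\le|a_0|\le f_+(w)\}$, and you rule out isolated points by a first/second-derivative analysis of the inner radius. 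This costs more work but buys more --- an explicit formula for $\Lambda(p)$ and the fact that, where positive, the relevant branch $g_{i_0}$ has only strict local maxima, which constrains the component structure well beyond nondegeneracy. Two small repairs: your appeal to "the $p_j$'s are distinct" is unwarranted (the definition of $\CS$ only forces $p_m$ to strictly dominate, so e.g.\ $p_1=\dots=p_{m-1}$ is allowed and Cauchy--Schwarz can be an equality when $i_0=m$); fortunately strictness is never needed, since the non-strict conclusion $\alpha_{i_0}\le\sum_{j\ne i_0}\alpha_j$ already contradicts $g_{i_0}(w_0)>0$. And the $m=2$ "direct computation" should be recorded: at the unique critical point one gets $g_1''(w_0)=p_1\alpha_1(p_1-p_2)<0$ and $g_1(w_0)>0$, so $|g_1|$ has a strict local maximum there, as needed.
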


\begin{proof} Suppose $x_i = y_i = x$. Then, for any $\xi_1, \ldots, \xi_m \in S^1$:  $a_0 + \sum_{i=1}^m a_i \xi_i e^{p_i x} = 0$. In particular, this would be true for $\overline{\xi} = (1, \ldots, 1, 1)$ and for $\overline{\xi}' = (1, \ldots, 1, -1)$. Subtracting equations, we would get $2a_m e^{p_m x} = 0$, which is impossible. 
\end{proof}

\begin{definition} Let $p \in \CS$, and $j \in \BN$. Let $j^+ = \min\{k \in \BN|  k >j, \rho_k(p) \neq \rho_j(p) \}$. Similarly, let $j^- = \max\{k \in \BN| k < j, \rho_k(p) \neq \rho_j(p)  \}$.
\end{definition}

\begin{lemma} \label{Lemma3.2.6} Suppose $\Lambda(p) = \bigcup_i [x_i, y_i]$. Let $j \in \BN$. Suppose $\rho_j$ is continuous at $p$, and that $\rho_j(p) \in [x_i, y_i]$ for some $i$.
\begin{enumerate}
\item If $\rho_j(p) < y_i$, then $\rho_{j^-}$ is not continuous at $p$.
\item If $\rho_j(p) > x_i$, then $\rho_{j^+}$ is not continuos at $p$.
\end{enumerate}
\end{lemma}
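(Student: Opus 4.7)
The plan is to prove (1) by contradiction; (2) follows either by a parallel argument (placing the witness below $\rho_j(p)$) or by invoking (1) for the reflected poly-exponential $\overline{Q}$ of Theorem \ref{Theorem3}. Suppose for contradiction that both $\rho_j$ and $\rho_{j^-}$ are continuous at $p$. Since $\rho_j(p)<y_i$ and, by Lemma \ref{Lemma3.2.5}, each interval of $\Lambda(p)$ has positive length, the slice $(\rho_j(p),y_i)$ is a non-empty open subset of $\Lambda(p)$. The key step is to pick a witness $\alpha$ in this slice with two properties: $\alpha$ avoids the finite exceptional set $\{\rho_k(p)\}_k$, and, whenever the configuration allows, $\alpha>\rho_{j^-}(p)$. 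Proposition \ref{Prop3.2.1} then delivers a sequence $q_n\to p$ of rational parameters in $\CS$ together with roots $w_n\in\CZ(q_n)$ such that $\mathfrak{Re}(w_n)\to\alpha$.

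Next I would locate the class $[w_n]_{q_n}$ within the sorted spectrum of $\CZ(q_n)$. Continuity of $\rho_j$ at $p$ forces $\rho_j(q_n)\to\rho_j(p)<\alpha$, so for $n$ large the rank of $[w_n]$ is strictly less than $j$. On the other hand, Theorem \ref{Theorem1} supplies continuous roots tracking the $j^-$ top classes of $\CZ(p)$: the classes whose real parts are $\rho_1(p),\ldots,\rho_{j^-}(p)$ all extend to continuous roots on a neighborhood of $p$. Because $\alpha$ differs from every $\rho_k(p)$, the class $[w_n]$ is a genuinely extra class, and for every $\varepsilon>0$ and all $n$ large $\CZ(q_n)$ contains at least $j^-+1$ classes of real part exceeding $\rho_{j^-}(p)-\varepsilon$. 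When $\alpha>\rho_{j^-}(p)$, sorting these $j^-+1$ real parts in decreasing order places $\alpha$ strictly above $\rho_{j^-}(p)$, and this pushes the value $\rho_{j^-}(p)$ one slot down in the ordering of $\CZ(q_n)$; consequently $\rho_{j^-}(q_n)$ is forced to a limit strictly greater than $\rho_{j^-}(p)$, contradicting the assumed continuity.

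The remaining edge case, $\rho_{j^-}(p)\geq y_i$, where the choice $\alpha>\rho_{j^-}(p)$ inside $(\rho_j(p),y_i)$ is unavailable, is handled by observing that $\rho_{j^-}(p)\in\Lambda(p)$ must lie in some other interval $[x_\ell,y_\ell]$ of $\Lambda(p)$; the same insertion argument then runs on a witness $\alpha'\in(\rho_{j^-}(p),y_\ell)$, or on a strictly higher interval of $\Lambda(p)$ when $\rho_{j^-}(p)=y_\ell$. The main obstacle throughout is the delicate combinatorics of ties at the value $\rho_{j^-}(p)$: one must verify that the single extra class $[w_n]$ genuinely shifts the sorted list at the relevant position rather than being absorbed into the multiplicity of $\rho_{j^-}(p)$, and this is precisely what the careful choice of $\alpha$ in Step 1 (avoiding $\{\rho_k(p)\}_k$ and sitting strictly above $\rho_{j^-}(p)$) is designed to secure.
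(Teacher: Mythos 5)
Your overall strategy---manufacture a witness value of $\Lambda(p)$ via Proposition \ref{Prop3.2.1}, track the top classes of $\CZ(p)$ with the continuous roots of Theorem \ref{Theorem1}, and argue that the extra class dislocates the ordered list---is the paper's strategy. But you place the witness differently, and that choice creates two genuine gaps. The paper takes $\alpha$ in $\Lambda(p)\cap\big(\rho_j(p),\rho_{j^-}(p)\big)$; under the hypotheses this set always contains the nonempty open interval $\big(\rho_j(p),\min(y_i,\rho_{j^-}(p))\big)$, and with $\alpha$ strictly below $\rho_{j^-}(p)$ the extra class is squeezed (by continuity of $\rho_j$ from below and the assumed continuity of $\rho_{j^-}$ from above) into a rank strictly between $j^-$ and $j$, which is what yields the contradiction. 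You instead insist on $\alpha>\rho_{j^-}(p)$, which requires $y_i>\rho_{j^-}(p)$, and your fallback for the case $\rho_{j^-}(p)\geq y_i$ does not close: if $\rho_{j^-}(p)$ is the right endpoint of its interval of $\Lambda(p)$ and there is no higher interval (nothing rules out $\rho_1(p)=\cdots=\rho_{j^-}(p)=\max\Lambda(p)$), you are left with no witness at all.

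Second, even when your witness exists, the ``pushes $\rho_{j^-}(p)$ one slot down'' step fails in exactly the tie situation you flag, and the choice of $\alpha$ does not repair it. If $\rho_{j^-}(p)$ occurs with multiplicity $a\geq 2$ among the top $j^-$ entries at $p$, i.e. $\rho_{j^--a+1}(p)=\cdots=\rho_{j^-}(p)$, then inserting one extra class \emph{above} that value displaces the entry at rank $j^--a+1$, while rank $j^-$ can still be occupied by one of the $a$ tracked classes converging to $\rho_{j^-}(p)$; your count of ``at least $j^-+1$ classes exceeding $\rho_{j^-}(p)-\varepsilon$'' only bounds $\rho_{j^-+1}(q_n)$ from below and shows discontinuity of $\rho_{j^--a+1}$, not of $\rho_{j^-}$. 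Taking $\alpha$ off the finite set $\{\rho_k(p)\}_k$ guarantees the class is genuinely new, but it cannot stop it from being absorbed above the tied block. You should switch to the paper's placement of $\alpha$ inside $\big(\rho_j(p),\min(y_i,\rho_{j^-}(p))\big)$, which removes the case division entirely and isolates the one counting step that actually has to be checked.
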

\begin{proof} 
Suppose $\rho_j(p) < y_i$. If $\rho_{j^-}$ were continuous at $p$, we would have to have that $\Lambda(p) \cap \big(\rho_{j}(p) , \rho_{j^-}(p)\big) = \emptyset$, which is manifestly not the case. The same argument holds for the second claim in the lemma. 
\end{proof}

Note that a similar lemma holds for the $\lambda_j$'s. The first claim of Theorem \ref{Theorem2} follows directly from this lemma. \\

  It remains to prove the second claim.  Choose  $Q (w, \ell) =  e^{\ell_1 w} + \ldots + e^{\ell_m w} - 1$. Let $M = \CS - (\BR \cdot \BQ^m)$. Let $\ell \in \CM$.  Let $$\omega = \omega(\ell) = \sup\{ x \in \BR: \|e^{\ell_1x} \| \geq \|e^{\ell_2 x} \| + \ldots + |e^{\ell_m x}| + 1\}$$
Clearly $\omega$ is finite, and $e^{\ell_m \omega} - \sum_{i=2}^m e^{\ell_m \omega} - 1 = 0$. Furthermore, since $\ell \notin \BR \cdot \BQ^m$, $\nexists t \in \BR, z \in \BC$ such that $(e^{\ell_1 t}, \ldots, e^{\ell_m t}) = (z, \ldots, z, -z)$. Thus, $\nexists \overline{\xi} \in (S^1)^m \backslash (1, -1, \ldots, -1)$ such that $G(\overline{\xi}, \omega) = 0$. In particular, $Q(\omega, \ell) \neq 0$. Furthermore, by definition, we also have that if $w \in \CZ(\ell)$, then $\mathfrak{Re}(w) \leq \omega$. This shows that $\rho_1(p) \neq \sup \Lambda(p)$, and hence $\rho_1$ is not continuous at $\ell$. Notice that $\CM$ is dense and co-null to complete the proof. 

\begin{remark}\label{Rmk3.2.7} In this proof, we could have replaced $Q$ with $-1 + \sum_{i=1}^m \pm e^{\ell_i w}$, as long as there were at least two indices with a $+$ sign, and two with a $-$ sign. 
\end{remark}

\begin{remark}\label{Rmk3.2.8} The same proof shows that the poly-exponential $e^{\ell_1 (w)} - \sum_{i=2}^m e^{\ell_i w} - 1$ is Perron-Frobenius on the set $\CM \cap \{\ell: \ell_1 > \ell_2\ldots,\ell_m   \}$.
\end{remark}

\end{proof}

\subsection{Proof of Theorem \ref{Theorem3}}\label{Sec3.3}
\begin{proof}

Observe that: $\overline{Q}(w, \ell) =  e^{\ell_m w} Q(-w, \ell)$, and thus, for any $\ell \in \CS$, we have the equality $\CZ_+(Q, \ell) = -\CZ_-(\overline{Q}, \ell)$. Observe further that for any $\ell $ , and $r \in \BR$ we have that $\CZ(r \ell) = \frac{1}{r} \CZ(\ell)$. For $(p,q) \in \tup{Int}([0,\infty^m \times [0, \infty]^m)$, the result now follows directly from Theorem \ref{Theorem1}. Now, suppose $(p,q) \in \partial [0,\infty]^m \times [0, \infty]^m$

\begin{lemma}\label{Lemma3.3.1}
Let $\spc$ be the space defined in \ref{3.1.1}. For every $R > 0$, there exists $r < R$, and $\epsilon > 0$ such that for any $f \in \spc$, there exists $\rho < r$ such that 
$$\min_{|z| = \rho} |f(z)| > \epsilon $$

\end{lemma}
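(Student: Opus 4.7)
The plan is to argue by contradiction, combining the compactness of $\spc$ in the compact-open topology (Lemma \ref{Lemma3.1.9}) with the fact that no $f \in \spc$ is identically zero: condition (2) in the definition of $\spc$ forces $|f^{(i)}(0)| > E > 0$ for some $1 \leq i \leq m$, so every $f \in \spc$ has isolated zeros.

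Suppose the conclusion fails for some fixed $R > 0$. Applying the negation with the specific choices $r_n = R/2$ and $\epsilon_n = 1/n$, I obtain a sequence $\{f_n\} \subset \spc$ such that for every $\rho \in (0, R/2)$, $\min_{|z|=\rho}|f_n(z)| \leq 1/n$. By Lemma \ref{Lemma3.1.9}, a subsequence $f_{n_k}$ converges in the compact-open topology to some $f \in \spc$. Because the convergence is uniform on the compact circle $|z|=\rho$, I can pass to the limit in the above bound and conclude $\min_{|z|=\rho}|f(z)| \leq 0$ for every $\rho \in (0, R/2)$. Thus $f$ vanishes at a point of every such circle, giving uncountably many zeros of $f$ in $B(0, R/2)$. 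By the identity theorem, $f \equiv 0$, which contradicts $f \in \spc$.

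The main obstacle is really just the limit step: one must verify that $\min_{|z|=\rho}|f_{n_k}| \to \min_{|z|=\rho}|f|$, which is immediate from the uniform convergence of $f_{n_k}$ to $f$ on the compact set $|z|=\rho$. As a more constructive alternative, one could instead pick for each $f \in \spc$ a radius $\rho(f) \in (0, R/2)$ that avoids the finitely many zeros of $f$ in $\overline{B(0, R/2)}$, set $\epsilon(f) = \min_{|z|=\rho(f)}|f| > 0$, observe that the bound $\epsilon(f)/2$ persists throughout a compact-open neighborhood $U_f$ of $f$, and extract a finite subcover $U_{f_1}, \ldots, U_{f_N}$ of $\spc$ via Lemma \ref{Lemma3.1.9}; then $r = R/2$ and $\epsilon = \min_i \epsilon(f_i)/2$ satisfy the conclusion.
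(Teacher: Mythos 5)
Your argument is correct and is essentially the paper's own proof: negate the statement, use the compactness of $\spc$ (Lemma \ref{Lemma3.1.9}) to extract a limit $f$ whose minimum modulus vanishes on every circle of radius less than $r$, and contradict the fact that no $f \in \spc$ is identically zero, so its zero set is discrete. Your finite-subcover alternative is a fine repackaging of the same compactness idea and adds nothing that needs separate verification.
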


\begin{proof} Suppose not. Then there exists $R > 0$, such that for every $r < R$, $\epsilon > 0$, there exists a function $f$ such that for every $\rho < r$, $\min_{|z| = \rho} |f(z)| \leq \epsilon $. Fix a particular $0 < r < R$. Set $\epsilon_n = \frac{1}{n}$, and let $f_n$ be the functions provided by the above hypothesis with respect to $r, \epsilon_n$. Since $\spc$ is compact, we may assume that $f_n \to f$. Each circle of the form $\{|z| = \rho\}$ is compact. Thus, on in each such circle, $\exists z_\rho$ such that $f(z_\rho) = 0$. Thus, $f$ has a non-discrete set of zeroes, which is impossible since $f \neq 0$. 
\end{proof}

\begin{lemma}\label{Lemma3.3.2}

Let $Q_1: \BC \times \BR^r \to \BC$ and $Q_2: \BC \times \BR^s$ be poly-exponentials, such that $Q_1$ has a non-zero free term, but $Q_2$ does not. Let $Q: \BC \times \BR^s \times \BR^t \to \BC$ be given by $Q(w, \ell_1, \ell_2) = Q_1(w, \ell_1) + Q_2(w, \ell_2)$. Fix $l \in \BR^r_{> 0}$, and $M < 0$. Let $H_M = \{z \in \BC | \mathfrak{Re}(z) < M \}$. Then for any $0 < \xi < M$, there exist $\delta, D > 0$ such that whenever $|l_1 - l| < \delta$ and $l_2 \in \BR^s_{>0}$ satisfies that every component is greater than $D$:

$$\forall w \in \CZ(Q,l_1, l_2) \cap H_M, \exists \rho < \xi:  \CN(Q(\cdot, l_1, l_2), w, \rho) = \CN(Q_1(\cdot, l_1), w, \rho) $$
and:

$$\forall w \in \CZ(Q_1,l_1) \cap H_M, \exists \rho < \xi: \CN(Q(\cdot, l_1, l_2), w, \rho) = \CN(Q_1(\cdot, l_1), w, \rho) $$

Where $\CN(f,x,y)$ is the number of zeroes of $f$, counted with multiplicity in the ball of radius $y$ about $x$.

\end{lemma}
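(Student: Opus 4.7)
The approach is to apply Rouch\'e's theorem on small circles around each zero of $Q$ and of $Q_1$ in $H_M$. The key observations are that, because $Q_2$ has no constant term, $Q_2(\cdot,l_2)$ becomes uniformly small on $H_M$ when every component of $l_2$ is large, while because $Q_1$ has a non-zero constant term, the translates of $Q_1(\cdot,l_1)$ at points where $|Q_1|$ is small all lie in a fixed space $\spc$ and hence admit uniformly sized circles of non-vanishing via Lemma~\ref{Lemma3.3.1}. (I will interpret $\xi$ as small positive with $\xi<-M$, which appears to be the intended reading of the hypothesis.)

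First I would write $Q_2(w,l_2)=\sum_{j=1}^s b_j e^{(l_2)_j w}$ and note that on the half-plane $\{\mathfrak{Re}(u)\le M+\xi\}$ we have $|Q_2(u,l_2)|\le \sum_j |b_j| e^{(M+\xi)(l_2)_j}$, which tends to $0$ uniformly as $\min_j (l_2)_j\to\infty$ since $M+\xi<0$. Next, because $Q_1$ has a non-zero free term, I would fix a relatively compact neighborhood $A\subset\BR^r_{>0}$ of $l$ and invoke Proposition~\ref{Prop3.1.7} together with Remark~\ref{Rmk3.1.8}: these produce constants $E,R,\epsilon_1>0$ such that for every $l_1\in A$ and every $w\in\BC$ with $|Q_1(w,l_1)|<\epsilon_1$, the translated function $z\mapsto Q_1(z+w,l_1)$ belongs to $\spc$.

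Applying Lemma~\ref{Lemma3.3.1} to this $\spc$ (with its parameter chosen at most $\xi$, shrinking the neighborhood or bounding $R$ as needed so the lemma outputs radii less than $\xi$) gives me $r<\xi$ and $\epsilon_0\in(0,\epsilon_1)$ such that every $f\in\spc$ admits some $\rho<r$ with $\min_{|z|=\rho}|f(z)|>\epsilon_0$. I then choose $\delta$ small enough that $|l_1-l|<\delta$ forces $l_1\in A$, and $D$ large enough that every $l_2$ whose coordinates all exceed $D$ satisfies $|Q_2(u,l_2)|<\epsilon_0$ throughout $\{\mathfrak{Re}(u)\le M+\xi\}$. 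For any $w\in\CZ(Q,l_1,l_2)\cap H_M$ we have $Q_1(w,l_1)=-Q_2(w,l_2)$ so $|Q_1(w,l_1)|<\epsilon_0<\epsilon_1$, putting the translate in $\spc$; for $w\in\CZ(Q_1,l_1)\cap H_M$ the translate is in $\spc$ trivially. In either case a radius $\rho<\xi$ is obtained with $|Q_1(z+w,l_1)|>\epsilon_0>|Q_2(z+w,l_2)|$ on $|z|=\rho$ (since $|z|<\xi$ keeps $z+w$ in the good half-plane), and Rouch\'e's theorem yields $\CN(Q(\cdot,l_1,l_2),w,\rho)=\CN(Q_1(\cdot,l_1),w,\rho)$.

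The central difficulty is that $H_M$ is unbounded in the imaginary direction, so the approximate zeros $w$ of $Q_1$ one must handle form an unbounded set and no naive compactness argument gives a uniform lower bound on $|Q_1|$ near them. This is precisely the role of the machinery from Section~\ref{3.1.1}: Proposition~\ref{Prop3.1.7} with Remark~\ref{Rmk3.1.8} collapses the analysis at every approximate zero into the fixed normal family $\spc$, and Lemma~\ref{Lemma3.3.1} together with the compactness of $\spc$ (Lemma~\ref{Lemma3.1.9}) supplies the uniform non-vanishing circles needed to run Rouch\'e's theorem.
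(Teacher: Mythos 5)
Your proof is correct and follows essentially the same route as the paper's: Remark \ref{Rmk3.1.8} places the translates of $Q_1$ at approximate zeroes into $\spc$, Lemma \ref{Lemma3.3.1} supplies uniform non-vanishing circles of radius less than $\xi$, taking $D$ large makes $Q_2$ uniformly small on the shifted half-plane, and Rouch\'e's theorem concludes. Your observation that any $w \in \CZ(Q,l_1,l_2)$ automatically satisfies $|Q_1(w,l_1)| = |Q_2(w,l_2)| < \epsilon_0$ even streamlines the paper's separate treatment of points where $|Q_1|$ is not small, and you correctly read the hypothesis as $\xi < -M$.
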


\begin{proof} Let $a_0$ be the free term of $Q_1$. Fix a number $0 < \gamma < |a_0|$. By remark \ref{Rmk3.1.8}, $\exists \delta_0 > 0$ and $E, R > 0$ such that whenever $|l - l_1| < \delta_0$, and $\omega \in H_M$ satisfies $|Q_1(\omega,l)| < \gamma$, then $Q_{(\omega,l )} \in \spc$. Let $r,\epsilon > 0$ be the numbers assigned to $\xi$ by lemma \ref{Lemma3.3.2}. For sufficiently large values of $D > 0$, we have that $Q(w, l_2) < \epsilon$, for any $w \in H_{M+\xi}$. Thus, if $Q_1(\omega, l_1) < \gamma$, the number of zeroes of $Q_1(\cdot, l_1)$ in $B(\omega, \rho)$ is the same as the number of zeroes of $Q(\cdot, l_1,l_2)$ in the same set, for some value of $\rho < r < \xi$, by Rouche's theorem. 

On the other hand, if $D$ is sufficiently large, then $|Q_2(w, l_2)| < \gamma$ for every $w \in H_M$. So if  $|Q_1(w, \ell_1)| > \gamma$,  then $Q(w, l_1, l_2) \neq 0$. Thus, for any $\rho$ sufficiently small, if $\CN(Q(\cdot, l_1, l_2), w, \rho) \neq 0$, then $\exists w' \in B(w, \rho)$ such that $|Q_1(w, l_1)| < \gamma$ and the result holds by the previous argument. 
 
\end{proof}

\begin{lemma}\label{Lemma3.3.3}

In the notation above, if  $\omega \in \CZ(p,q)$ then $\exists i$ such that $$\lim_{\iota(\ell \to (p,q))} \psi'_i(\ell)= \omega$$

\end{lemma}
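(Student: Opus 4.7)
The plan is to dispose of the case $\omega \in \CZ_-(Q,p)$; the case $\omega \in -\CZ_-(\overline{Q},q)$ follows by symmetry, using the identity $\overline{Q}(w,\ell) = e^{-\ell_m w}Q(-w,\ell)$ and tracking zeros of $\overline{Q}$ with $\bar\ell$ in place of $\ell$, so that zeros in $\CH_+$ are captured by the third clause of the definition of $\psi_i'$. First, introduce the rescaled variable $v = \min(\ell)\cdot w$: zeros $\psi_i(\ell)\in\CH_-$ of $Q_\ell$ correspond to zeros $\min(\ell)\psi_i(\ell)$ of
$$\tilde Q_\ell(v) := Q\!\left(\tfrac{v}{\min(\ell)},\ell\right) = a_0 + \sum_{i=1}^m a_i e^{(\iota_1(\ell))_i v}, \qquad \iota_1(\ell) := \tfrac{\ell}{\min(\ell)}.$$
As $\iota(\ell)\to(p,q)$ we have $\iota_1(\ell)\to p$, so the task reduces to producing a zero of $\tilde Q_\ell$ converging to $\omega$, trackable by a single continuous root of $Q_\ell$.

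Partition the indices as $\{1,\dots,m\} = F\sqcup I$ with $F=\{i:p_i<\infty\}$ and $I=\{i:p_i=\infty\}$, and decompose $\tilde Q_\ell = Q_1\bigl(\cdot,\iota_1(\ell)|_F\bigr) + Q_2\bigl(\cdot,\iota_1(\ell)|_I\bigr)$ with
$$Q_1(v,l_1) = a_0 + \sum_{i\in F} a_i e^{l_i v},\qquad Q_2(v,l_2) = \sum_{i\in I} a_i e^{l_i v}.$$
The free terms of $Q_1$ and $Q_2$ are $a_0\neq 0$ and $0$ respectively, so Lemma \ref{Lemma3.3.2} applies with reference vector $l := p|_F$ (all components of $p|_F$ lie in $[1,\infty)$). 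Under the convention $e^{\infty v}=0$ for $v\in\CH_-$, the hypothesis $\omega\in\CZ_-(Q,p)$ is exactly $Q_1(\omega,p|_F) = 0$ with $\mathfrak{Re}(\omega)<0$. Pick $M\in(\mathfrak{Re}(\omega),0)$ and a sufficiently small $\xi>0$. By Rouch\'e applied to the analytic family $Q_1$, for $l_1$ close to $p|_F$ there is a zero $\omega_{l_1}\in H_M$ of $Q_1(\cdot,l_1)$ near $\omega$. Lemma \ref{Lemma3.3.2} then gives, for $\iota_1(\ell)|_F$ close to $p|_F$ and every component of $\iota_1(\ell)|_I$ exceeding the threshold $D$, a $\rho<\xi$ with
$$\CN\bigl(\tilde Q_\ell,\omega_{l_1},\rho\bigr) = \CN\bigl(Q_1(\cdot,l_1),\omega_{l_1},\rho\bigr)\geq 1.$$
Hence $\tilde Q_\ell$ admits a zero $v_\ell\in B(\omega,\rho)$, and pulling back, $w_\ell := v_\ell/\min(\ell)\in\CZ(\ell)\cap\CH_-$ satisfies $\min(\ell)w_\ell\to\omega$.

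To promote $w_\ell$ to a single $\psi_i$ from the collection in Theorem \ref{Theorem1}, shrink $\rho$ and exploit the analytic structure of $\CX = Q^{-1}(0)$: inside $B(\omega,\rho)$ the total multiplicity of zeros of $\tilde Q_\ell$ is constant by the Rouch\'e count above, so by the implicit function theorem along regular points of the analytic set, $v_\ell$ may be selected continuously on a connected neighborhood of the limit. This yields a local continuous root of $Q_\ell$; by Proposition \ref{Prop3.1.2} and the extension argument of Corollary \ref{Cor3.1.3} it extends to a globally continuous root on $\CS$, which we incorporate into the Theorem \ref{Theorem1} collection (countability of the enlarged collection is preserved by Lemma \ref{Lemma3.1.4}). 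Since $\psi_i(\ell)\in\CH_-$ near the limit, the second clause of the definition of $\psi_i'$ gives $\psi_i'(\ell) = [\min(\ell)\psi_i(\ell)] = [v_\ell]\to[\omega]$ in $\BC'$, as required.

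The main obstacle is the selection of a single continuous branch in the final paragraph: a priori the index in the countable collection $\{\psi_i\}$ achieving proximity to $\omega$ could jump as $\ell$ varies, producing many isolated subsequential limits rather than a bona fide limit. This is controlled by the fixed Rouch\'e count from Lemma \ref{Lemma3.3.2}, which keeps the cluster of zeros inside $B(\omega,\rho)$ from colliding with or splitting off to the rest of the spectrum, so the cluster can be tracked by a single continuous section that is then globally extended.
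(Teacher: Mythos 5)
Your reduction and the first two-thirds of the argument match the paper's proof: you split $Q=Q_1+Q_2$ according to the finite and infinite coordinates of $p$, invoke Lemma \ref{Lemma3.3.2}, and use the Rouch\'e count to produce zeros $w_\ell\in\CZ(\ell)\cap\CH_-$ with $\min(\ell)w_\ell$ close to $\omega$. The gap is in the final branch-selection step. The implicit function theorem only applies at regular points, and the cluster in $B(\omega,\rho)$ may well consist of a multiple zero or of several colliding zeros (the paper itself stresses, e.g.\ in Corollary \ref{Corollary2}, that higher multiplicities do occur); a constant Rouch\'e count prevents zeros from entering or leaving the ball, but it does not yield a continuous single-valued selection from the cluster over a parameter region of dimension $\geq 2$ --- already for $z^2=$ parameter over a disk no continuous choice of root exists, even though the count is constantly $2$. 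Moreover, even if you could build such a new continuous root, adjoining it to the family only proves the statement for your enlarged collection, whereas Lemma \ref{Lemma3.3.3} (and Theorem \ref{Theorem3}) are assertions about the given collection $\{\psi_i\}$ furnished by Theorem \ref{Theorem1}: you would still have to exhibit a single index $i$ of that collection whose $\psi_i'$ converges to $\omega$, which is exactly the index-jumping problem you flag but do not resolve.

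The paper avoids any selection. Since, by Lemma \ref{Lemma3.3.2}, the number of zeros (with multiplicity) lying in $B(\omega,\xi)$ is a non-zero constant for $\iota(\ell)$ in a connected neighborhood $U$ of $(p,q)$, and since each $\psi_i'$ is continuous, the \emph{set} of indices $i$ with $\psi_i'(\ell)\in B(\omega,\xi)$ cannot change as $\ell$ varies over $\iota^{-1}(U)$: an index entering or leaving would force a change in the count. These index sets are non-empty, finite, and nested as $\xi$ decreases, so intersecting them along a sequence $\xi_n\to 0$ produces one fixed $i$ with $\psi_i'(\ell)\in B(\omega,\xi_n)$ whenever $\iota(\ell)\in U(\xi_n)$, i.e.\ $\lim_{\iota(\ell)\to(p,q)}\psi_i'(\ell)=\omega$. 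Replacing your last paragraph with this argument (keeping your rescaling and your application of Lemma \ref{Lemma3.3.2}) closes the gap.
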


\begin{proof}
Suppose, without loss of generality that $\omega \in \CH_-$. Let $J = \{j_1, \ldots, j_s \}$ be the set of coordinates such that $p_{j_i} = \infty$. Write $Q = Q_1 + Q_2$, where $Q_2 = \sum_{j \in J} a_j e^{\ell_j w}$. Set $M = \frac{\mathfrak{Re}(\omega)}{2}$.  By Lemma \ref{Lemma3.3.2}, for any $\xi < M$,  there exists a neighborhood $U$ of $(p,q)$, such that if $\iota(\ell) \in U$, then the number of $i$'s, counted with multiplicity, such that $\psi_i'(\ell) \in B(\omega, \xi)$ is a non-zero constant on $\overline{U}$. By continuity, this implies that the \emph{set} of $i$'s such that $\psi_i'(\ell) \in B(\omega, \xi)$ is constant on $\overline{U}$. Sending $\xi \to 0$ gives the result. 
\end{proof}

If $\psi$ is a root of $Q$, and $\lim_{\iota(\ell) \to (p,q)} \psi'(\ell) = \omega$, and $\omega \neq [0]$, then  by continuity, $\omega \in \CZ(p,q)$. Thus, to conclude the proof of Theorem \ref{Theorem3}, it suffices to prove the following lemma. 

\begin{lemma}\label{Lemma3.3.4}
If $\psi$ is a root of $Q$ then $\lim_{\iota(\ell) \to (p,q)} \psi'(\ell)$ exists. 
\end{lemma}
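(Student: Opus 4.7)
My plan is to study the cluster set
\[
K \;=\; \bigcap_{U\ni(p,q)} \overline{\psi'\bigl(\iota^{-1}(U)\cap\CS\bigr)}
\]
(taken in a suitable compactification of $\BC'$), where $U$ ranges over connected open neighborhoods of $(p,q)$ in $[0,\infty]^m\times[0,\infty]^m$, and to show $|K|=1$. First I would identify the candidates. Given $\omega\in K$ with $\omega\neq[0]$, the element $\omega$ lifts to a point with strictly nonzero real part, and by symmetry I may assume $\omega\in\CH_-$. Along a sequence $\ell_n$ with $\iota(\ell_n)\to(p,q)$ and $\psi'(\ell_n)\to\omega$, one has $\psi(\ell_n)\in\CH_-$ eventually, so $w_n:=\psi'(\ell_n)=\min(\ell_n)\psi(\ell_n)\to\omega$. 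The identity $Q(\psi(\ell_n),\ell_n)=0$ rewrites as
\[
a_0 + \sum_{i=1}^{m} a_i\,\exp\!\bigl((\ell_n)_i\,w_n/\min(\ell_n)\bigr) = 0,
\]
and since $\mathfrak{Re}(\omega)<0$ strictly, the terms with $(\ell_n)_i/\min(\ell_n)\to\infty$ (i.e.\ the indices for which $p_i=\infty$) vanish in the limit while the others converge, yielding $\omega\in\CZ_-(Q,p)\subseteq\CZ(p,q)$. Hence $K\subseteq\CZ(p,q)\cup\{[0]\}$.

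Next I would argue that $K$ is connected. Since $\psi'$ is continuous on the connected, locally connected open set $\CS$, and since $\iota$ is degree-$0$ homogeneous so that $(p,q)$ admits a shrinking basis of connected neighborhoods in $\iota(\CS)\cup\{(p,q)\}$, each $\psi'(\iota^{-1}(U)\cap\CS)$ is connected, hence so is its closure, and $K$ is a nested intersection of compact connected sets in a Hausdorff space, therefore connected. Now each $\omega\in\CZ(p,q)$ is isolated in $\CZ(p,q)\cup\{[0]\}$ in the topology of $\BC'$: it is an isolated zero of the nonzero analytic limit poly-exponential $Q_p$ (or $\overline{Q}_q$) in $\BC$, and $|\mathfrak{Re}(\omega)|>0$ separates $\omega$ from $[0]$ in $\BC'$. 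Therefore any connected subset of $\CZ(p,q)\cup\{[0]\}$ reduces to a single point, since an isolated $\omega$ inside such a subset would produce a proper clopen piece $\{\omega\}$ unless the whole subset equals $\{\omega\}$. Applying this to $K$ gives $|K|=1$, so the limit exists.

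The main obstacle is ensuring that $\psi'$ does not escape to infinity in $\BC'$ as $\iota(\ell)\to(p,q)$; otherwise compactness in the cluster-set argument fails. A Lemma~\ref{Lemma3.1.5}-style bound applied to the rescaled family controls the real part of $\psi'(\ell)$ uniformly, so the only potential escape is $|\mathfrak{Im}(\psi'(\ell))|\to\infty$. I would rule this out by a Rouch\'e argument in the spirit of Lemma~\ref{Lemma3.3.2}: choose a large rectangle $R\subset\CH_-$ whose boundary $\partial R$ avoids the zeros of $Q_p$; the uniform convergence $Q(\cdot,\iota_1(\ell))\to Q_p$ on $R$ (the finite-$p_i$ terms converge while the infinite-$p_i$ terms decay, since $R$ is bounded away from the imaginary axis) gives a uniform lower bound for $|Q(\cdot,\iota_1(\ell))|$ on $\partial R$ once $\iota(\ell)$ is close enough to $(p,q)$. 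Hence the continuous tracking $\psi'(\ell)$ cannot cross $\partial R$, and is either trapped in a bounded neighborhood of the finitely many zeros of $Q_p$ in $R$, or drifts continuously so that $\mathfrak{Re}(\psi'(\ell))\to 0$, i.e.\ toward $[0]$ in $\BC'$. Combining this boundedness with the connectedness and discreteness arguments of the first two paragraphs completes the proof.
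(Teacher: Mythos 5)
Your cluster-set framing (limit points with nonzero real part are zeros of the limiting poly-exponential; a connected set all of whose non-$[0]$ points are isolated is a singleton) is sound and is a legitimately different superstructure from the paper's proof, but the step you yourself flag as the main obstacle --- ruling out escape to infinity --- is where the argument genuinely breaks. The limiting poly-exponential $Q_p=a_0+\sum_{p_i<\infty}a_ie^{p_iw}$ has \emph{infinitely} many zeros in $\CH_-$, with unbounded imaginary parts, typically filling a vertical strip $\{-C\le\mathfrak{Re}(w)\le-\delta\}$; the same is true of the rescaled functions $Q(\cdot,\iota_1(\ell))$. Hence $\psi'(\ell)$ can sit \emph{outside} any fixed bounded rectangle $R$ --- above or below it, with real part still bounded away from $0$ --- and the fact that it cannot cross $\partial R$ gives no control there. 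Your concluding dichotomy ``trapped near the finitely many zeros of $Q_p$ in $R$, or $\mathfrak{Re}(\psi')\to0$'' is therefore unjustified: the scenario $|\mathfrak{Im}(\psi'(\ell))|\to\infty$ with $\mathfrak{Re}(\psi'(\ell))\le-\delta$ is not excluded. This is not a cosmetic issue, because without it the cluster set $K$ (taken in your compactification) may contain a point at infinity, destroying both the inclusion $K\subseteq\CZ(p,q)\cup\{[0]\}$ and the compactness/connectedness step on which the whole singleton argument rests.

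What the paper uses at exactly this point is uniformity in the imaginary direction, supplied by the compact function spaces $\spc$ (Lemma \ref{Lemma3.1.9}, Remark \ref{Rmk3.1.8}, Lemma \ref{Lemma3.3.1}): recentering $Q$ at \emph{any} point of $H_M$ where $|Q|$ is small lands in a fixed compact family, so the Rouch\'e radius and lower bound in Lemma \ref{Lemma3.3.2} are independent of the location of the root, in particular of its imaginary part. Even that alone does not bound $\mathfrak{Im}(\psi')$ (the nearby zeros of $Q_1$ also have unbounded imaginary parts); the actual trapping in the paper is dynamic: once $\psi'(\ell_0)\in H_M$ for a single $\ell_0$ with $\iota(\ell_0)$ in a connected neighborhood $U$, the locally constant root-count argument of Lemma \ref{Lemma3.3.3} pins $\psi'(\ell)$ inside one fixed ball $B(\omega',\xi)$ about one zero of the limit for \emph{all} $\ell\in\iota^{-1}(U)$, which simultaneously forbids drifting up the strip and yields convergence as $\xi\to0$; the only remaining case is $\psi'(\ell)\notin H_M\cup-H_M$ for every $M$, i.e.\ $\psi'\to[0]$. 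To repair your write-up, replace the bounded-rectangle step by an appeal to Lemma \ref{Lemma3.3.2} together with this constancy-of-the-root-set argument; at that point the trapping already gives the limit directly, and the cluster-set/connectedness machinery becomes optional.
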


\begin{proof} 
Fix $M < 0$, and choose $0 < \xi < M$. Let $U$ be the neighborhood of $(p,q)$ provided by Lemma \ref{Lemma3.3.2}, with respect to $M, \xi$. If $\exists \ell \in \CS, \iota(\ell) \in U$ such that $\psi'(\ell) = \omega \in H_M$, then by Lemma \ref{Lemma3.3.2}, $\exists \omega' \in \CZ(Q_1, \ell) \cap B(\omega, \xi) = \CZ(p,q) \cap B(\omega, \xi)$, where $Q_1$ is the poly-exponential defined in the proof of Lemma \ref{Lemma3.3.3}. By choosing $\xi$ sufficiently small, we can assume that $\CZ (p,q)$ has a unique zero in this set. By the same argument as the one in the proof of the lemma, $\psi'(\ell') \in B(\omega', \xi)$, for every $\ell' \in \iota^{-1}(U)$. Sending $\xi \to 0$, we get that $\psi'(\ell) \to \omega'$, as $\iota(\ell) \to (p,q)$. A similar argument holds for the set $-H_M$. Thus, if there is no $\omega' \neq [0]$ such that $\psi' \to \omega'$, we must have that for every $M$, there is a neighborhood $U = U(M)$,  of $(p,q)$ such that $\psi'(\ell) \notin H_M \cup -H_M$ for every $\ell \in \iota^{-1}(U_M)$. But this implies that $\psi' \to [0]$.

\end{proof}

\nid Theorem \ref{Theorem3} is a direct consequence of Lemmas \ref{Lemma3.3.3} and \ref{Lemma3.3.4}.  
\end{proof}

\section{Examples}\label{Sec4}
\subsection{Example 1} \label{Sec4.1}

$$ \includegraphics[width=100mm]{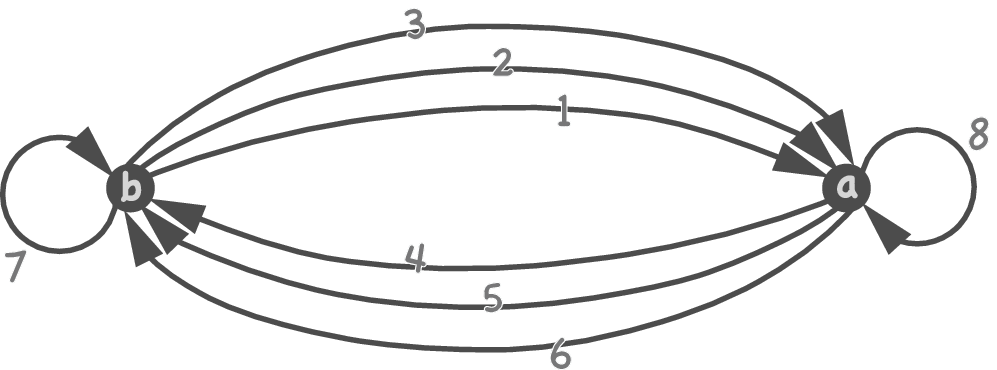} $$

Let $\Gamma$ be the graph pictured above. The Perron polynomial $P(t,\ell)$ is equal to $$\det \left(\begin{array}{c c} 1- t^{\ell_7} & - t^{\ell_1} - t^{\ell_2} - t^{\ell_3} \\ -t^{\ell_4 } - t^{\ell_5} - t^{\ell_6} & 1 - t^{\ell_8} \end{array}\right)$$ 

\nid Calculating its determinant we have: 

$$P(t,\ell) = t^{\ell_8 + \ell_7} - \sum_{i = 1}^3 \sum_{j = 4}^6 t^{\ell_i + \ell_j} - t^{\ell_7} - t^{\ell_8} +1 $$

Replacing $t^x$ with $e^{x w}$, we get the corresponding poly-exponential, which we call $Q = Q(w, \ell)$. Let $\ell$ be given by $$\ell_1 = \ldots = \ell_6 = 1, \ell_7 = \ell_8 = 2$$ 
and let $\ell'$ be given by: 
$$\ell'_1 = \ell_2 = \ell'_3 = 4, \ell_4 = \ell_5 = \ell_6 = 5, \ell_7 = \ell_8 = 6 $$

\nid We have that $P(t, \ell) = t^4 - 11t^2 + 1$, and $P(t, \ell') = t^{12} - 9t^9 - 2t^6 + 1$. The solutions to $P(t, \ell) = 0$ satisfy ${|t|}^2 = \frac{11 \pm \sqrt{117}}{2}$. Note that one of these absolute values is greater than one, and one is less. The solutions to $P(t, \ell') = 0$ satisfy that ${|t|}^3 \approx 9.215, 0.507, 0.421$. 

Let $R$ be the poly-exponential $R(w, l) = e^{l_1 w} - 9e^{l_2 w } + 2 e^{l_3 w} + 1$. It is simple to see that with respect to the poly-exponential $R$, for any $p \in \CS \subset \BR^3$, we have that $0 \notin \Lambda(p)$. 

The points $\ell$ and $\ell'$ can be connected by a path in the subspace $V = \{l | l_1 = l_2 = l_3, l_4 = l_5 = l_6, l_7 = l_8\}$. By the above remark, we see that in this subspace, there are no solutions of $Q = 0$ that have a real part of $0$. Thus, by Theorem \ref{Theorem1}, there exists a root $\psi$ of $Q$ such that $|\exp\big(\psi(\ell) \big)| = (\frac{11 + \sqrt{117}}{2})^{\frac{1}{2}}$, and $|\exp\big(\psi(\ell') \big)| \approx {9.215}^{\frac{1}{3}}$, and furthermore, any root $\psi$ of $Q$ satisfying $|\exp\big(\psi(\ell) \big)| = (\frac{11 + \sqrt{117}}{2})^{\frac{1}{2}}$ also satisfies $|\exp\big(\psi(\ell') \big)| \approx {9.215}^{\frac{1}{3}}$. 

\nid Using Theorem \ref{Theorem1}, we also get the existence of a root $\psi_1$ such that $|\exp\big(\psi_1(\ell) \big)| = (\frac{11 - \sqrt{117}}{2})^{\frac{1}{2}}$, and $|\exp\big(\psi_1(\ell') \big)| \approx {0.507}^{\frac{1}{3}}$, and a root $\psi_2$ such that $|\exp\big(\psi_2(\ell) \big)| = (\frac{11 - \sqrt{117}}{2})^{\frac{1}{2}}$ and $|\exp\big(\psi_2(\ell') \big)| \approx {0.421}^{\frac{1}{3}}$.Furthermore, any root of $Q$ satisfies one of the above descriptions.

 \subsection{Example 2} \label{Sec4.2}
 $$ \includegraphics[width=120mm]{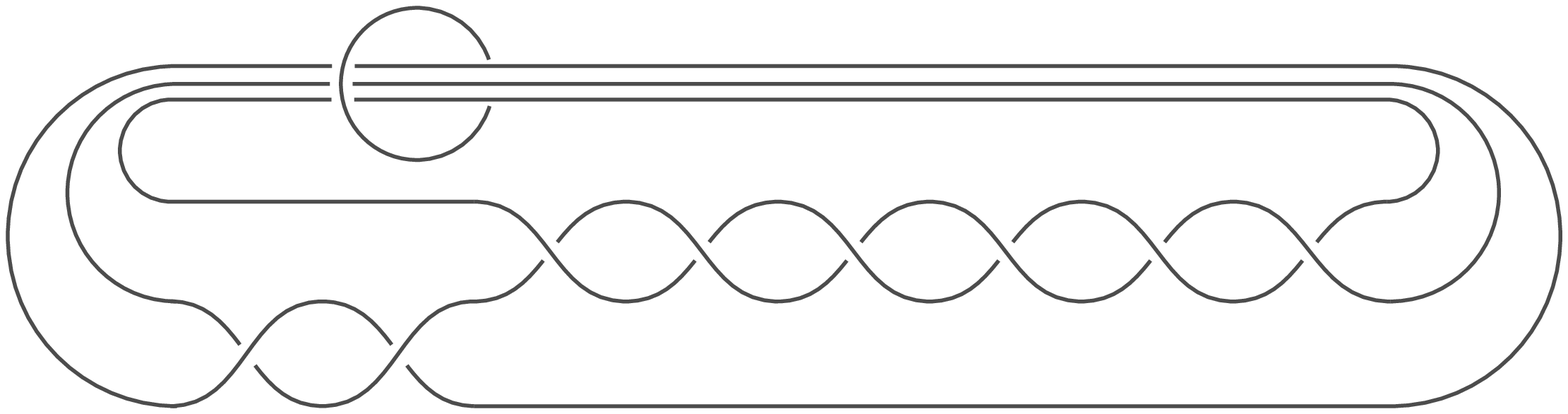} $$

 Let $L$ be a regular neighborhood of the link pictured above, and let $M = S^3 - L$. The manifold $M$ is compact, hyperbolic, and fibers over the circle. There is one fibration where the fiber is the thrice punctured disk, and the monodromy is given by the braid $\sigma_1^2 \sigma_2^{-6}$, where $\sigma_i$ is the half Dehn-twist that interchanges the $i^{th}$ and $(i+1)^{th}$ punctures. In \cite{McMullenP}, McMullen calculated the Tecihm\"uller polynomial of a fibered face of this manifold. We write it in a notation more amenable to our uses, expanding and multiplying out by an expression to give a constant term of $1$.  \\
 
 \nid We get:
 
 $$P(t, a,b,c,u) = t^{2u-a+2b+3c} + t^{u - a - b} + t^{u+ 3b + 3c} + t^{u - a + 3b + 4c} + t^{u + 3b + 4c}$$
 $$- \sum_{i=1}^4 \big[t^{u - a +(i-1)b + ic} + t^{u + (i-1)b + ic} + t^{u - a + (i-1)b + (i-1)c} + t^{u + (i-1)b + (i-1)c} \big]  + 1$$
 
 \nid where in $\CF$ the leading exponent is $2u - a + 2b + 3c $. It is simple to check that the point $(a,b,c,u) = (-1, 3, 1, 3) \in \partial \BR_+\CF$. Indeed, at this point $t^{2u - a + 2b + 3c} = t^{u-a+ 3b + 3c}$, but there are arbitrarily close points where $t^{2u - a + 2b + 3c}$ is the leading term. Consider the curve $\gamma(x) = (-1,3,1,3) - x(-1, 4, 5 ,0)$. As $x \to 0$, this path approaches the boundary of the fibered face, and thus the dilatation goes to $\infty$ along $\gamma$. We will calculate the rate at which it goes to $\infty$ as well as finding the rates of all other roots that go to $\infty$ along $\gamma$.
 
 Let $Q: \BC \times \BR_{>0}^{13}$ be the poly-exponential $Q(w, \ell) = e^{\ell_{13} w} - \sum_{i=1}^{12} e^{\ell_i w} + 1$. By ordering the monomials in $P(t,a,b,c,u)$ such that $t^{2u - a + 2b + 3c}$ is the last one, we can think of the polynomial $P(\cdot, a, b, c, u)$ as being $Q(\cdot, \ell)$ for some $\ell$. Precomposing this identification with $\gamma$, we get a curve $\ell(x): [0,1] \to \BR_{>0}^{13}$ such that $\ell(x) \to \partial \CS$ as $x \to 0$.  By \cite{McMullenP}, the polynomial $P$ is palindromic. Thus, we will not lose information by restricting ourselves to $\CZ_+(\ell)$. Calculating, we get that $\min(\bar{\ell}(x)) = 23x$. We have that  $\lim_{x \to 0} \iota(\ell(x)) = (p,q)$. As stated above, it is enough to calculate $\CZ(q)$.
 
At $(-1,3,1,3)$, the highest exponent in $P$ is $t^{20}$. There are only two monomials whose exponents approach $20$ as $x\to 0$. These are $t^{2u - a + 2b + 3c}$ and $-t^{u - a + 3b + 3c}$. Thus, we get that $q$ has only two coordinates that are not equal to $\infty$. Calculating gives us that these coordinates are $1$ (corresponding to $t^{2u - a + 2b + 3c}$), and $\frac{26}{23}$ (corresponding to $-t^{u - a + 3b + 3c}$).  Solving the equation: $$-t^{\frac{26}{23}} + t + 1 = 0$$
and taking the absolute values of the solutions, we see that the solution set contains $13$ distinct absolute values, $8$ of which are less than $1$. Call them $\gamma_1, \ldots, \gamma_8$. The smallest of these, $\gamma_1$, corresponds to the solution $\approx 0.972069^{\frac{1}{6}}$. 
Thus, recalling that $\Lambda$ is the function that assigns to a cohomology class the $\log$ of its dilatation, we get that, as $x \to 0$: 

$$ \frac{1}{23 \gamma_1x} \Lambda(\gamma(x))^{-1} \to 1$$
giving the growth rate of the dilatation along this path. Furthermore, for any other root $\psi$, we either have that $[\psi]$ is bounded along the path, or $$ \frac{1}{23 \gamma_i x} |\exp\big(\pm \psi(\ell(x)) \big)|^{-1} \to 1 $$
for $2 \leq i \leq 8$, where all possible $i$'s occur as growth rates for roots.

\subsection{Example 3}\label{Sec4.3}

$$ \includegraphics[width=80mm]{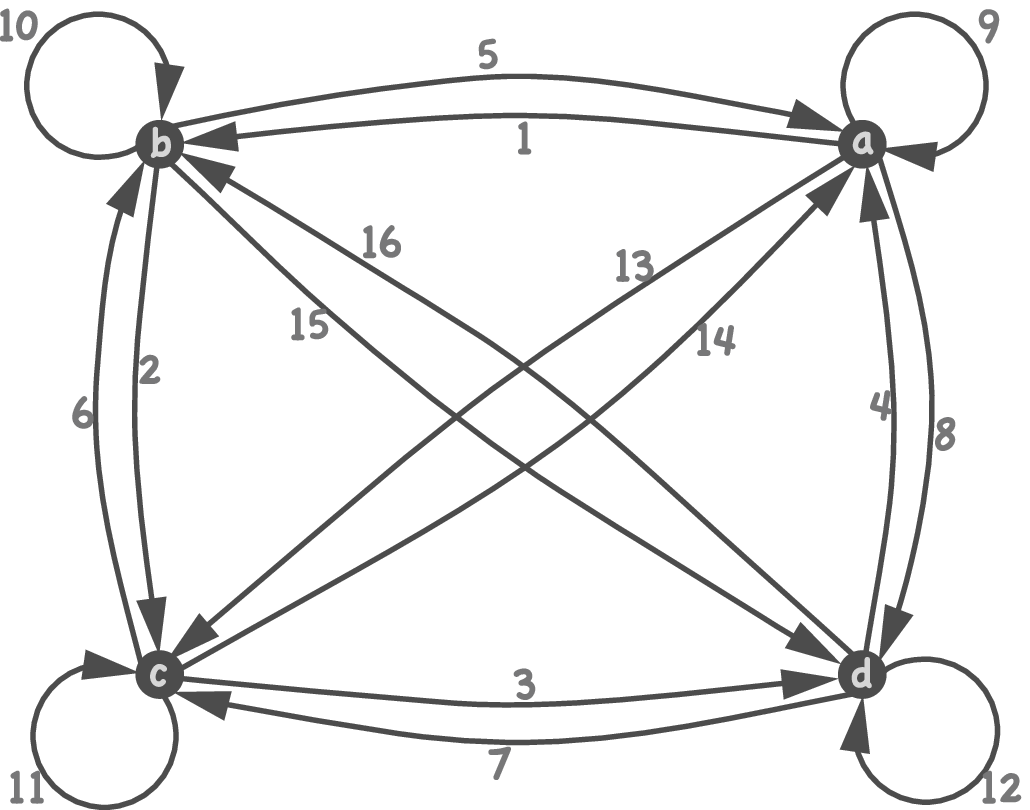} $$

Let $\Gamma$ be the graph pictured above, and let $T$ be the automorphism that rotates the graph by $\frac{\pi}{2}$. There is a single $T$ orbit of vertices of $\Gamma$. Let vertex $a$ be a representative in this orbit. Suppose we want to calculate $P_{T,-1}$. By the proof of Proposition \ref{Proposition5}, since there is only one orbit, we can take: 

$$P_{T, -1}  = 1 - t^{\ell_1} + t^{\ell_{9}} + t^{\ell_{11}} - t^{\ell_{8}}$$

By the proof of the second part of \ref{Theorem2}, we see that there is a co-null set $\CE \subset \CS$ on which $\lambda_1$ cannot be extended continuously for this poly-exponential.

\end{document}